\newcommand{\C}{\mathbb{C}}  
\newcommand{\N}{\mathbb{N}}
\newcommand{\Z}{\mathbb{Z}}
\DeclareMathOperator{\Quot}{Quot}
\newtheorem{main}{Theorem}
\newtheorem{thrm}{Theorem}[section]
\newtheorem{lemma}[thrm]{Lemma}
\newtheorem{prop}[thrm]{Proposition}
\newtheorem{cor}[thrm]{Corollary}
\theoremstyle{plain}
\newtheorem{defn}[thrm]{Definition}
\theoremstyle{remark}
\newcommand{\set}[1]{ \{ #1 \} }
\newcommand{\norm}[1]{\| #1 \|}
\newcommand{\generator}[1]{\langle #1 \rangle}
\title{Classification of Tensor Decompositions of II$_1$ Factors Associated With  Poly-Hyperbolic Groups}
\author[R. de Santiago]{Rolando de Santiago}
\address{Department of Mathematics, University of California Los Angeles, Los Angeles, CA, USA }
\email{rdesantiago@math.ucla.edu}
\thanks{R.dS. was partially supported by  a grant at from Sloan Center for Exemplary Mentoring, NSF Grant DMS \# 1600688  and NSF RTG DMS \# 1344970}
\author[S. Pant]{Sujan Pant}
\address{Department of Science and Mathematics, Alvernia University, Reading, PA, USA}
\email{sujan.pant@alvernia.edu}
\begin{document}
\begin{abstract}
We demonstrate von Neumann algebra arising from an icc group $\Gamma$ in Chifan's, Ioana's, and Kida's class of  poly-$\mathcal{C}_\text{rss} $, such as a poly-hyperbolic group with no amenable factors in its composition series,   satisfies the following rigidity phenomenon discovered in \cite{DHI16} (see also \cite{CdSS17}): every tensor decomposition of the II$_1$ factor $L(\Gamma) $  must arise from direct product decomposition of $\Gamma $ by groups which are poly-$ \mathcal{C}_\text{rss}$.  Through heavy usage and developments of the techniques in \cite{CdSS15}, we improve the second author's and their collaborator's work in \cite{CKP14} by providing group-level criteria for determining whether a group von Neumann algebra is prime: $L(\Gamma) $ is prime precisely  when the group is indecomposable as a direct product of  non-amenable groups. We further demonstrate that all tensor decompositions of finite index subalgebras  of $L(\Gamma) $ correspond to a splitting of $\Gamma $ as a product by groups which are  also  poly-$\mathcal{C}_\text{rss} $ \emph{up to commensurability}.
\end{abstract}
\maketitle

\section{Introduction}

An important topic in  structural properties of II$_1$ factors is the classification of an algebra's tensor decompositions. In his seminal work,  Popa provided the first result in this direction by demonstrating that the non-separable von Neumann algebra arising from the free group on uncountably many generators, $L(\mathbb{F}_S) $, cannot written as a tensor product of II$_1$ factors \cite{Po83}.  Popa called von Neumann algebras which exhibit this indecomposability property  \emph{prime}.  Primeness results for separable II$_1$ factors remained elusive for over a decade; only after Ge's adaptation of Voiculescu's  free probability theory were the separable free group factor(s), $L(\mathbb{F}_n) $ with $ n\geq 2$, shown to be prime  \cite{Ge98}.Ge conjectured that the internal structure of $L(\mathbb{F}_n)$ should in fact have greater restriction on relative commutants of subalgebras.   By developing  $C^*$ algebraic methods, Ozawa provided a sweeping generalization of previous works and confirmed Ge's suspicions by proving that all II$_1$ factors arising  from non-amenable icc hyperbolic groups  are in fact \emph{solid}, i.e.~the relative commutant of any diffuse subalgebra remains amenable.  
Developing the innovative framework of deformation/rigidity,
Popa provided an alternative approach for establishing solidity for the free group factors exclusively contained within von Neumann algebra theory
\cite{Po06}. 
The advent of these methods  paved the way for much of intense research activity over the past decade  \cite{Oz04,OP07, CH08, CI08,CP10,  Fi10,Si10,Va10,CS11,CSU11, SW11, HV12,Bo12, BHR12, DI12, CKP14, Is14, HI15, Ho15, DHI16, Is16}.  

\subsection{Statement of the main results}

In \cite{DHI16},   Drimbe, Hoff and Ioana  prove that if  $\Gamma $is an icc group which is measure equivalent to a direct product of hyperbolic icc groups, then every tensor decomposition of $ L(\Gamma)$ into II$_1$ factors $ P_1\bar\otimes P_2$  must arise from  direct product decomposition of the group $\Gamma$.   In an investigation independent of this project, 
 Chifan, Sucpikarnon, and the first author uncovered  the same phenomenon occurs the setting of von Neumann algebras arising from a large collection of groups stemming from amalgamated free product and HNN extensions \cite{CdSS17}.

A natural extension to the result of \cite{DHI16} is to investigate whether the algebras induced from a subcollection of poly-hyperbolic groups, those whose quotients are non-amenable hyperbolic and more generally 
Chifan, Ioana and Kida's class  $\Quot(\mathcal{C}_\text{rss}) $, admit similar classification of their respective tensor product decompositions. 
Chifan, Kida, and Pant established primeness for the group von Neumann algebras of icc groups in the collection   $ \Quot(\mathcal{C}_\text{rss})\cap NC_1 $  (see \cite[Notation 0.1]{CSU13}).  
Eliminating the assumption that the group lies in the class  $NC_1$, we combine the methods and results  from \cite{PV11,PV12, Va13, CIK13,CKP14, CdSS15} to demonstrate that whenever $\Gamma\in \Quot(\mathcal{C}_\text{rss}) $ is an icc group, every  decomposition of $L(\Gamma) $ as a tensor product of II$_1$ factors is in correspondence with decomposition $\Gamma $ as a direct product.  Moreover, we are also able to show that each factor which decomposes a group $\Gamma $ as a direct product is contained within the collection $\Quot(\mathcal{C}_\text{rss}) $ thereby  providing a group-level criterion for deducing primeness of the resulting factors.

\begin{main}\label{main:Theorem2Tensor}
Let  $\Gamma\in\Quot_n(\mathcal{C}_\text{rss}) $ be an icc group and suppose there exist  II$_1$ factors $P_1, P_2$ such that $L(\Gamma)\cong P_1\bar\otimes  P_2 $.  Then there exist subgroups $  \Gamma_1, \Gamma_2\leqslant\Gamma$, a unitary $u\in \mathcal{U}(L(\Gamma)) $, positive integers  $n_1,n_2 $, and  a scalar $t> 0 $  such that:
\begin{enumerate}
\item $\Gamma=\Gamma_1\times \Gamma_2 $ with $\Gamma_i\in \Quot_{n_i}(\mathcal{C}_\text{rss}) $ and $n_1+n_2=n $; and
\item $P_{1}^t=u L(\Gamma_1)u^* $ and $P_{2}^{1/t}=u L(\Gamma_2)u^* $.
\end{enumerate}
\end{main}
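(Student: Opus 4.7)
The plan is to prove this by induction on $n$, the poly-$\mathcal{C}_\text{rss}$ length of $\Gamma$. The base case $n=1$ amounts to primeness of $L(\Gamma)$ when $\Gamma\in\mathcal{C}_\text{rss}$ is icc. This should follow by combining the Popa--Vaes deformation/rigidity machinery \cite{PV11,PV12,Va13} with the $\mathcal{C}_\text{rss}$ rigidity developed in \cite{CIK13}, reworking the argument of \cite{CKP14} so as to eliminate the $NC_1$ assumption flagged in the introduction. In this case one of $n_1, n_2$ vanishes and the statement is trivial.

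For the inductive step, write $\Gamma$ as an extension $1\to N\to\Gamma\to Q\to 1$ with $N\in\Quot_{n-1}(\mathcal{C}_\text{rss})$ and $Q\in\mathcal{C}_\text{rss}$, and suppose $L(\Gamma)=P_1\bar\otimes P_2$. The strategy is to peel one tensor factor onto the normal subalgebra $L(N)$ and the other onto the ``quotient direction'', then read off the resulting decomposition at the group level. First, I would apply Popa's intertwining-by-bimodules together with a $\mathcal{C}_\text{rss}$-type dichotomy for the quotient $Q$ to show that, after relabeling, amplifying by some $t>0$, and conjugating by a unitary $u$, one has $u^* P_2^{1/t} u \subseteq L(N)$; equivalently, $P_i \prec_{L(\Gamma)} L(N)$ for exactly one $i$. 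Next, using the commutation $[P_1,P_2]=0$ together with the normalizer and quasi-normalizer calculus of \cite{CdSS15,DHI16,CdSS17}, I would promote this analytic containment to commuting subgroups $\Gamma_1,\Gamma_2 \leqslant \Gamma$ with $u L(\Gamma_i) u^*$ matching $P_i$ up to the appropriate amplification. The icc hypothesis on $\Gamma$ is then invoked to upgrade this commuting-subgroup data into a genuine direct product splitting $\Gamma=\Gamma_1\times\Gamma_2$ (rather than merely a commensurable one). Finally, each $\Gamma_i$ is shown to inherit the poly-$\mathcal{C}_\text{rss}$ structure with $n_1 + n_2 = n$, using closure of $\Quot(\mathcal{C}_\text{rss})$ under extensions and quotients in a controlled series, and applying the induction hypothesis as needed to the smaller factor.

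The principal obstacle will be the first step, namely the location of a tensor factor inside $L(N)$. Unlike in \cite{DHI16}, where the measure-equivalence hypothesis to a direct product of hyperbolic groups supplies an a priori product-like structure to exploit, here the poly-tower furnishes only a single layer of rigidity (from the top quotient $Q$) at a time, and one must propagate the location statement down the tower while simultaneously tracking the positions of $P_1$ and $P_2$. This is precisely where the heavy deployment of the \cite{CdSS15} techniques advertised in the abstract will be required, in order to transfer rigidity across the successive normal subalgebras $L(N)$. A secondary subtlety arises in the group-theoretic promotion step, where one must ensure the splitting occurs on the nose rather than merely virtually; the icc assumption on $\Gamma$, together with careful control of centralizers of finite-index subgroups, should suffice to rule out the commensurable-but-not-equal scenario and deliver the exact statement claimed.
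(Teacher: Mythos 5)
Your outline tracks the paper's strategy at a high level---induction on the Hirsch length $n$, peeling off the top quotient $Q = \Gamma/\Gamma_n^{(1)}$ via the $\mathcal{C}_\text{rss}$ dichotomy (implemented through Chifan--Ioana--Kida's lifted homomorphism $\Delta\colon L(\Gamma)\to L(\Gamma)\bar\otimes L(Q)$, $u_\gamma\mapsto u_\gamma\otimes v_{\rho(\gamma)}$), transferring the resulting intertwining to commuting subgroups via the \cite{CdSS15} machinery, and finishing with icc to upgrade ``commensurable'' to ``equal.'' However, the induction as you have set it up has a genuine gap. You propose to induct on the tensor-decomposition statement itself (Theorem A), but this is too weak to carry the inductive step. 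After applying the dichotomy and \cite[Proposition~2.4]{CKP14}, what you obtain is a $*$-isomorphism $\psi$ of a corner $pP_1p$ onto a subalgebra $B\subset q L(\Gamma_n^{(1)})q$ with $B\vee (B'\cap qL(\Gamma_n^{(1)})q)\subset qL(\Gamma_n^{(1)})q$ of \emph{finite Pimsner--Popa index}; the relative commutant $B'\cap qL(\Gamma_n^{(1)})q$ picks up a corner of $P_2$, but there is no reason for this to constitute a \emph{tensor product} decomposition of a corner of $L(\Gamma_n^{(1)})$. When $B'\cap qL(\Gamma_n^{(1)})q$ is diffuse (which certainly occurs for $n>2$), you are left with two commuting diffuse factors generating a finite-index subalgebra, not a tensor factorization, and your inductive hypothesis as stated does not apply.

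The paper avoids this by proving a strictly stronger statement (Theorem~\ref{thrm:main2Algebras}): for $P_1,\ldots,P_k\subset qL(\Gamma)q$ diffuse commuting II$_1$ factors with $[qL(\Gamma)q : P_1\vee\cdots\vee P_k]<\infty$, one produces a commensurable product decomposition $\Gamma\sim\Sigma_1\times\cdots\times\Sigma_k$ with $\Sigma_i\in\Quot_{n_i}(\mathcal{C}_\text{rss})$ and finite-index inclusions $D_i\subset p_iu^*L(\Sigma_i)up_i$. The induction runs on \emph{this} statement, precisely because the relative commutant produced in the inductive step lands you in the finite-index setting, not the tensor-product setting. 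The tensor decomposition statement (Corollary~\ref{cor:ProductExact} and then Theorem A) is then deduced afterward, following \cite[Theorem~4.14]{CdSS15} for the unitary conjugacy and amplification bookkeeping, with the exact splitting $\Gamma=\Gamma_1\times\Gamma_2$ coming from $uL(\Gamma_1\Gamma_2)u^*=L(\Gamma)$ and icc killing $\Gamma_1\cap\Gamma_2$. You would also want to make explicit, before applying \cite[Proposition~2.4]{CKP14}, the step that rules out \emph{both} factors intertwining into $L(\Gamma_n^{(1)})$: this is where Proposition~\ref{prop:StrongIntertwiningMultiple} and the factoriality of the $P_i$ (to upgrade $\prec$ to $\prec^s$) enter, yielding $[\Gamma:\Gamma_n^{(1)}]<\infty$ and a contradiction with $\Gamma/\Gamma_n^{(1)}\in\mathcal{C}_\text{rss}$ being infinite. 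Strengthen your inductive statement to the finite-index commuting-factor version and the rest of your plan goes through essentially as the paper does it.
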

Recall that A.~Connes' landmark result implies that no algebraic information of an icc amenable group  $\Gamma$ be recovered from the algebra $L(\Gamma) $\cite{Co76}. For instance,  $L(S_\infty)\cong L(S_\infty\times S_\infty) $, but $S_\infty $ does not admit a direct product decomposition.  Despite the fact the assignment $\Gamma\mapsto L(\Gamma) $ often fails to retain properties of the underlying group, Theorem \ref{main:Theorem2Tensor} demonstrates that   the algebra  $L(\Gamma) $ can indeed ``remember'' the direct product structure of the group $\Gamma $ for a prominent class of groups.

Since the  class $\mathcal{C}_\text{rss} $ is closed under \emph{commensurability}, so is each class $\Quot_n(\mathcal{C}_\text{rss}) $.   Notably, this property allows us to  classify not only every tensor product decomposition of $L(\Gamma) $ but also classify   every tensor product decomposition of finite index subalgebras of $L(\Gamma) $.  

\begin{main}
Let $\Gamma\in \Quot_n(\mathcal{C}_{\text{rss}}) $ be an icc group and $p\in \mathcal{P}(L(\Gamma)) $ be a projection.  Suppose there exists $P_1,\ldots, P_k\subset pL(\Gamma )p$ commuting diffuse subalgebras  generating a finite index subalgebra of $pL(\Gamma) p$.  Then  there exists a projection  $p_i\in P_i $, finite index subfactors $ Q_i\subset p_iP_ip_i$, groups $\Gamma_1,\ldots, \Gamma_k $, and a unitary $u\in pL(\Gamma)p $ such that
\begin{enumerate}
\item $Q_i\subset p_iu^*L(\Gamma_i)up_i $ is a finite index inclusion of algebras so that $\Gamma_i\in \Quot_{n_i}(\mathcal{C}_\text{rss}) $ and $\sum_{i=1}^k n_i=n $, and
\item $\Gamma $ is commensurable to $\Gamma_1\times \cdots\times \Gamma_k$.
\end{enumerate}
In particular, $L(\Gamma) $ is prime if and only if $\Gamma $ is not commensurable to a non-trivial direct product of groups in $\Quot (\mathcal{C}_\text{rss}) $.  

\end{main}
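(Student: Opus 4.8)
The plan is to bootstrap the second main theorem (the finite-index / commensurability statement) from Theorem~\ref{main:Theorem2Tensor}, which already handles the case of an honest tensor decomposition $L(\Gamma)\cong P_1\bar\otimes P_2$ of the whole factor. The first reduction is from $k$ commuting subalgebras to two: writing $Q=P_1$ and $R$ for the algebra generated by $P_2,\dots,P_k$, one proceeds by induction on $k$ once the $k=2$ case is understood, so the heart of the matter is a pair $P_1,P_2\subset pL(\Gamma)p$ of commuting diffuse subalgebras generating a finite-index subalgebra $B:=\overline{P_1\vee P_2}\subset pL(\Gamma)p$.

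The second reduction is to upgrade ``finite index'' to ``equal''. Here I would invoke the structure of finite-index inclusions in II$_1$ factors: passing to the basic construction / using a Pimsner--Popa basis, one replaces $B\subset pL(\Gamma)p$ by an amplification in which $P_1,P_2$ become corners of commuting subfactors whose tensor product is all of a corner of $L(\Gamma)$; concretely, there is an amplification $L(\Gamma)^t$, a projection, and commuting subfactors $\tilde P_1,\tilde P_2$ with $\tilde P_1\bar\otimes\tilde P_2$ of finite index and ultimately (after a further corner) equal to a corner of $L(\Gamma)^t$. Because $L(\Gamma)$ is a II$_1$ factor and amplifications of $L(\Gamma)$ are again (stably isomorphic to) $L(\Gamma)$, one is back in the hypothesis of Theorem~\ref{main:Theorem2Tensor}, which then produces subgroups $\Gamma_1,\Gamma_2\leqslant\Gamma$ with $\Gamma=\Gamma_1\times\Gamma_2$, $\Gamma_i\in\Quot_{n_i}(\mathcal C_\text{rss})$, $n_1+n_2=n$, and a unitary conjugating $\tilde P_1^s,\tilde P_2^{1/s}$ onto $L(\Gamma_1),L(\Gamma_2)$. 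Tracing the amplifications and corners backwards yields projections $p_i\in P_i$, finite-index subfactors $Q_i\subset p_iP_ip_i$, and a unitary $u\in pL(\Gamma)p$ with $Q_i\subset p_iu^*L(\Gamma_i)up_i$ a finite-index inclusion, which is conclusion (1). The general $k$ then follows by applying this to $P_1$ versus $P_2\vee\cdots\vee P_k$, noting that a finite-index subalgebra of $L(\Gamma_2\times\cdots)$ again has its generating commuting pieces sitting in finite index, so induction applies and the $n_i$ add up correctly.

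For conclusion (2), commensurability of $\Gamma$ with $\Gamma_1\times\cdots\times\Gamma_k$ follows from the index bookkeeping: $L(\Gamma_1\times\cdots\times\Gamma_k)=L(\Gamma_1)\bar\otimes\cdots\bar\otimes L(\Gamma_k)$ sits at finite index (up to unitary conjugacy and amplification) inside $L(\Gamma)$, and a finite-index inclusion of group von Neumann algebras of icc groups that is compatible with the Cartan/group-measure-space structure forces commensurability of the groups --- here one uses that the $\Gamma_i$ were already realized as genuine subgroups of $\Gamma$ with $\prod\Gamma_i\leqslant\Gamma$ of finite index (which is exactly what the amplification-by-$t$ in Theorem~\ref{main:Theorem2Tensor} encodes: $t$ being rational corresponds to a genuine finite-index subgroup inclusion). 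The final ``in particular'' is then immediate: if $\Gamma$ is not commensurable to a nontrivial product of groups in $\Quot(\mathcal C_\text{rss})$, then no projection $p$ and commuting diffuse $P_1,P_2\subset pL(\Gamma)p$ can generate a finite-index subalgebra, since conclusion (2) would supply such a commensurable product decomposition; conversely, if $\Gamma$ is commensurable to $\Gamma_1\times\Gamma_2$ with both factors in $\Quot(\mathcal C_\text{rss})$ (hence both non-amenable, in particular diffuse), the corners of $L(\Gamma_1),L(\Gamma_2)$ give a nontrivial finite-index tensor splitting of a corner of $L(\Gamma)$, so $L(\Gamma)$ is not prime.

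I expect the main obstacle to be the second reduction --- propagating a \emph{finite-index} tensor splitting up to an \emph{honest} tensor splitting of an amplification while keeping track of the data (the projections $p_i$, the subfactors $Q_i$, and the single unitary $u$). One must be careful that the basic-construction/amplification procedure applied to the pair $(P_1,P_2)$ does not destroy the commutation or the diffuseness, and that the scalar $t$ produced by Theorem~\ref{main:Theorem2Tensor} can be arranged to be rational (equivalently, that the subgroup $\Gamma_1\times\Gamma_2$ really does sit at finite index in $\Gamma$), which is what makes the word ``commensurable'' --- rather than merely ``measure equivalent'' or ``virtually isomorphic after amplification'' --- legitimate. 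A secondary technical point is ensuring the induction on $k$ is set up so that the class $\Quot(\mathcal C_\text{rss})$, which is closed under commensurability, is genuinely inherited by the intermediate group $\Gamma_2\times\cdots\times\Gamma_k$ at each stage.
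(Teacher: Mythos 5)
Your proposal runs the paper's logic backwards, and the key step you yourself flag as ``the main obstacle'' is precisely where the argument breaks. In the paper, the finite-index statement is the primary result, proved directly as Theorem~\ref{thrm:main2Algebras} by an induction on the Hirsch length $n$: Proposition~\ref{prop:Intertwine1Level} (the comultiplication $\Delta$ together with the $\mathcal{C}_\text{rss}$ dichotomy) produces an intertwining $P_i\prec L(\Gamma_n^{(1)})$ or $\hat P_i\prec L(\Gamma_n^{(1)})$; the finite-index version of Popa's intertwining from \cite[Prop.\ 2.4]{CKP14} upgrades this to a $*$-isomorphism of a corner onto a finite-index subalgebra of a corner of $L(\Gamma_n^{(1)})$; the downward basic construction inverts the intertwiner; and Proposition~\ref{prop:finiteIndexAlgebrastoGroups} (the machinery adapted from \cite{CdSS15}) converts the resulting algebra inclusion $r[L(\Gamma_n^{(1)})\vee(L(\Gamma_n^{(1)})'\cap M)]r\subset rMr$ into a group-level virtual splitting $[\Gamma:\Sigma_1\times\Sigma_2]<\infty$. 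Theorem~\ref{main:Theorem2Tensor} (the exact tensor decomposition) is then deduced as Corollary~\ref{cor:ProductExact}, using the finite-index theorem together with a further argument to promote the finite-index inclusions $D_i\subset p_iu^*L(\Sigma_i)up_i$ to honest unitary conjugacies. Your plan inverts this order and would thus be circular against the paper's actual proof of Theorem~\ref{main:Theorem2Tensor}.

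More seriously, the ``second reduction'' --- promoting a finite-index inclusion $\overline{P_1\vee\cdots\vee P_k}\subset pL(\Gamma)p$ to an honest tensor decomposition of some amplification $L(\Gamma)^t$ --- is false as a general principle, and no amount of basic-construction bookkeeping will rescue it. The basic construction $\langle M,e_N\rangle$ of a finite-index inclusion $N\subset M$ is a (compression of an) amplification of $N$, not of $M$; in particular it enlarges the ambient algebra rather than realizing $M$ itself as a corner of an amplified tensor product. And indeed a finite-index tensor subfactor does not force stable tensor decomposition of the ambient factor: take $\Gamma=(\mathbb{F}_2\times\mathbb{F}_2)\rtimes\mathbb{Z}/2$ with the flip action. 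Then $L(\mathbb{F}_2)\,\bar\otimes\,L(\mathbb{F}_2)=L(\mathbb{F}_2\times\mathbb{F}_2)\subset L(\Gamma)$ has index $2$, yet $\Gamma$ admits no direct-product decomposition, so by Theorem~\ref{main:Theorem2Tensor} itself $L(\Gamma)$ is \emph{not} a tensor product of II$_1$ factors. (The group $\Gamma$ here lies in $\Quot_2(\mathcal{C}_\text{rss})$ since it is commensurable to $\mathbb{F}_2\times\mathbb{F}_2$, and the commensurability conclusion of the present theorem is satisfied, but the hypothesis of Theorem~\ref{main:Theorem2Tensor} is not.) This illustrates why the paper must treat the finite-index case directly and cannot derive it from the exact case: the finite-index hypothesis is genuinely weaker, and commensurability --- not exact product splitting --- is all that can be extracted. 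Two secondary remarks: your first reduction, from $k$ commuting subalgebras to two via $P_1$ against $P_2\vee\cdots\vee P_k$, needs care because $P_2\vee\cdots\vee P_k$ need not be a factor and its center has to be controlled (the paper handles this via Lemma~\ref{lem:TrivialNormalizer} and cuts by minimal central projections at each inductive stage); and rationality of the scalar $t$ plays no role in Theorem~\ref{main:Theorem2Tensor}, which already asserts exact equality $\Gamma=\Gamma_1\times\Gamma_2$ for arbitrary real $t>0$, so that is not where commensurability enters.
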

As a simple corollary to the theorems above, we obtain a unique prime decomposition similar to that of Ozawa and Popa for these group von Neumann  algebras \cite[Corollary 3]{OP03}.  

\begin{main}
Suppose $ \Gamma_1,\ldots, \Gamma_m $ and $\Lambda_1,\ldots, \Lambda_m $ are groups such that $\Gamma_i\in \Quot_{n_i}(\mathcal{C}_\text{rss}) $ and $\Lambda_{i}\in \Quot_{m_j}(\mathcal{C}_\text{rss}) $.  If $L(\Lambda_i) $ and $L(\Gamma_j) $ are prime II$_1$ factors so that $L(\Gamma_1\times \cdots\times \Gamma_m)\cong L(\Lambda_1\times \cdots\times \Lambda_n) $, then  $n=m $ and we have $L(\Gamma_i)\cong L(\Lambda_i)$, up to		 permutation and amplification.
\end{main}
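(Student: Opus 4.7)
The result is a Krull--Schmidt / Ozawa--Popa style unique prime factorization for group von Neumann algebras in the $\Quot_n(\mathcal{C}_{\text{rss}})$ class, which I would derive by induction on $m$ using Theorem A as the main input. The base case $m=1$ is immediate: primeness of $L(\Gamma_1)$ rules out any non-trivial tensor decomposition on the $\Lambda$ side, forcing $n=1$ and $L(\Gamma_1)\cong L(\Lambda_1)$.

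For the inductive step, set $\Gamma = \Gamma_1 \times \cdots \times \Gamma_m$; since $\Quot(\mathcal{C}_{\text{rss}})$ is closed under finite direct products, $\Gamma$ is an icc group in $\Quot_N(\mathcal{C}_{\text{rss}})$ for $N = n_1 + \cdots + n_m$. Viewing the given isomorphism as $L(\Gamma) \cong L(\Lambda_1) \mathbin{\bar\otimes} L(\Lambda_2 \times \cdots \times \Lambda_n)$, Theorem A produces a group direct product decomposition $\Gamma = \Sigma_1 \times \Sigma'$ with $\Sigma_1, \Sigma' \in \Quot(\mathcal{C}_{\text{rss}})$, together with a unitary $u \in \mathcal{U}(L(\Gamma))$ and $t > 0$ such that $L(\Lambda_1)^t = u L(\Sigma_1) u^*$ and $L(\Lambda_2 \times \cdots \times \Lambda_n)^{1/t} = u L(\Sigma') u^*$. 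Primeness of $L(\Lambda_1)$ transfers to $L(\Sigma_1)$ up to amplification.

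The crux is to match the direct factor $\Sigma_1$ with one of the $\Gamma_i$ up to commensurability and amplification. Here I would iterate Theorem A: the tautological decomposition $L(\Gamma) = L(\Gamma_1) \mathbin{\bar\otimes} L(\Gamma_2 \times \cdots \times \Gamma_m)$ together with the newly obtained decomposition $L(\Gamma) = L(\Sigma_1) \mathbin{\bar\otimes} L(\Sigma')$ gives two direct product structures on the icc group $\Gamma$, both of whose factors have prime group algebras---hence, by Theorem B, indecomposable as non-trivial direct products of groups in $\Quot(\mathcal{C}_{\text{rss}})$. A Krull--Schmidt--Remak style pairing, made rigorous by repeatedly applying Theorem A to isolate individual prime tensor factors, then produces some $i_0$ and $s > 0$ with $L(\Sigma_1) \cong L(\Gamma_{i_0})^s$. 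Peeling off this factor and applying the inductive hypothesis to $L(\Gamma_1 \times \cdots \times \widehat{\Gamma_{i_0}} \times \cdots \times \Gamma_m) \cong L(\Lambda_2) \mathbin{\bar\otimes} \cdots \mathbin{\bar\otimes} L(\Lambda_n)$ (up to suitable amplification) closes the induction. The main obstacle is the Krull--Schmidt matching: the icc hypothesis (which forces direct complements to coincide with centralizers) combined with iterated use of Theorem A and the primeness characterization in Theorem B is what makes the factors pair up uniquely.
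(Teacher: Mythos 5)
Your approach is essentially the one the paper takes, with one organizational difference: you induct on $m$ and peel off one tensor factor at a time with Theorem~A, whereas the paper first proves a multi-factor version of Theorem~A (Corollary~\ref{cor:ProuctExactMultiple}), applies it \emph{once} with $P_i = L(\Lambda_i)$, and cites it as ``immediately'' closing the argument. Your equality $n = m$ by the base-case / primeness argument also matches the paper, which gets it from Lemma~\ref{lem:MaxNumberCommutingAlg} together with Theorem~\ref{thrm:main2Algebras}.

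Where you correctly flag the genuine crux is the matching step, which the paper's proof is silent on. After Corollary~\ref{cor:ProuctExactMultiple} one has $\Gamma := \Gamma_1 \times \cdots \times \Gamma_m = \Sigma_1 \times \cdots \times \Sigma_m$ with $L(\Lambda_i)^{t_i} \cong L(\Sigma_i)$, but the corollary says nothing about how the $\Sigma_i$ relate to the $\Gamma_j$. Your ``Krull--Schmidt--Remak pairing'' points in the right direction, but you say it is ``made rigorous by repeatedly applying Theorem~A,'' which is not quite the mechanism. The correct (and in fact short) argument is purely group-theoretic once the two direct-product decompositions are in hand: for each coordinate projection $\pi_k : \Gamma \to \Gamma_k$, the images $\pi_k(\Sigma_1), \ldots, \pi_k(\Sigma_m)$ are commuting normal subgroups of $\Gamma_k$ whose product is $\Gamma_k$. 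Because $L(\Gamma_k)$ is prime and $\Gamma_k$ is icc (a direct factor of an icc group is icc), Lemma~\ref{lem:Quot1Prime} / Corollary~\ref{cor:QuotnCommutingGroups} forces all but one of the images to be finite, hence trivial (icc groups have no nontrivial finite normal subgroups). This produces a map $k \mapsto j(k)$ with $\Sigma_i \subseteq \Gamma_{k}$ whenever $j(k) = i$ and $\pi_k(\Sigma_i) = \{e\}$ otherwise; counting and the fact that $\prod_i \Sigma_i = \Gamma = \prod_k \Gamma_k$ then force the map to be a bijection and $\Sigma_i = \Gamma_{k(i)}$ \emph{on the nose}. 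This gives $L(\Lambda_i)^{t_i} \cong L(\Gamma_{\sigma(i)})$ directly, with no further appeal to Theorem~A and no genuine use of Krull--Schmidt machinery. So your proof reaches the right answer by the right overall route, but the matching step should be replaced by the projection argument above rather than an appeal to repeated applications of Theorem~A.
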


%
%
%

%
%
%
\subsection{Comments on the Proof}
The class of groups $\mathcal{C}_\text{rss} $ is comprised of all groups which satisfy the  dichotomy result for the classification of normalizes for crossed products by non-elementary hyperbolic groups  of Popa and Vaes \cite{PV12}.  We utilize this criterion to infer algebraic and analytic structural properties $\Gamma $ and $L(\Gamma) $, respectively.  
 To give a brief outline of the strategy involved, assume $\Gamma\in \Quot_n(\mathcal{C}_\text{rss}) $ is an $n$-step extension by groups in $\mathcal{C}_\text{rss}$.  

We begin by exploiting the aforementioned analytic aspects of $L(\Gamma) $  to conclude that whenever   $\Gamma=\Gamma_1\times \Gamma_2 $,  then either: $\Gamma_1 $ is finite and  $\Gamma_2\in \Quot_n(\mathcal{C}_\text{rss}) $; $\Gamma_2 $ is finite and $\Gamma_1\in \Quot_{n}(\mathcal{C}_\text{rss}) $; or there exist positive integers $n_1, n_2 $ so that $\Gamma_i\in \Quot_{n_i}(\mathcal{C}_\text{rss}) $.  This result holds in greater generality. If we instead assume $ \Gamma$ has two commuting subgroups which generate a finite index subgroup of $\Gamma $, than a parallel statement holds.

In \cite{CdSS15}, Chifan, Sinclair and the first author prove that whenever two groups $\Gamma $ and $\Lambda$ are such that $\Gamma=\Gamma_1\times \cdots\times\Gamma_k $ is a $k$-fold product of icc hyperbolic groups such with $L(\Gamma)\cong L(\Lambda) $, then $\Lambda $ is necessarily a $k$-fold product of non-amenable groups.  This so-called  product rigidity result   describes a non-trivial procedure whereby one transfers the existence of commuting algebras into the existence of large commuting subgroups of the mystery group $\Lambda $, which  can then be perturbed to decompose the group  $\Lambda$ into a non-trivial direct product.  Most notably, this provides a case where the analytic properties of the algebra $L(\Lambda) $ can be correlated to algebraic data of the group $\Lambda $.  Adapting this procedure to the situation where $\Gamma\in \Quot_n(\mathcal{C}_\text{rss}) $ and $L(\Gamma)\cong P_1\bar\otimes P_2 $ for some pair  $P_i$ of II$_1$ factors, then the classification for normalizers allows us to assume that a corner of $P_1$ can be embedded, in the sense of Popa, into $L(\Gamma_1) $ for some  normal subgroup  $\Gamma_1\triangleleft\Gamma $.  $\Gamma_{1} $ is necessarily  an $n-1$-step extension by groups in $\mathcal{C}_\text{rss} $.  This, in conjunction with intertwining results found  within \cite{CKP14}, will lead to the following two cases:
\begin{enumerate}
\item A corner of $P_1 $ is conjugate to a finite index subalgebra of a corner of $L(\Lambda) $, or
\item A corner of $P_1 $ and its relative commutant generate a finite index subalgebra of a corner of $L(\Lambda) $.
\end{enumerate}
The former case allows us to identify $P_1 $ with $L(\Gamma_1) $ and applying a procedure presented in \cite{CdSS15} we have that $\Gamma_1 $ the centralizer $\Gamma_2=C_\Gamma(\Gamma_1) $ effectively decompose $\Gamma $ as a direct product.  While case (2) requires a more detailed analysis, the proof essentially follows the same line of argument as case (1).

\subsection*{Acknowledgments}
The authors would like to thank Professor Ionu\c{t} Chifan for proposing the original idea from which this work was developed.  We are deeply indebted to him for his persistence, patience, guidance,  mentorship, and encouragement throughout the development of this work.

\section{Preliminaries}

A von Neumann  algebra $M$ is a unital, SOT closed, $*$-subalgebra of $B(H) $ for some Hilbert space $H$.   As proven by Murray and von Neumann ,  $M\subset B(H)$ is a von Neumann algebra if and only if $ M=M''$, where $M'=\set{y\in B(H): xy=xy \,\,\, \forall x\in M} $ is the commutant of $M$ \cite{MvN36}.  A von Neumann algebra $M$ is  termed a \emph{factor} if the center is trivial, i.e.~$ \mathcal{Z}(M)=M\cap M'\cong \C $. 

Unless explicitly stated, we shall assume  all von Neumann algebras considered are separable (in the sense that they act upon a countable Hilbert space) and all inclusions of von Neumann algebras $P\subseteq M $ are considered unital.   
 Whenever $P\subseteq M $ is an inclusion of algebras, $P'\cap M $ is the \emph{relative commutant of $P$ inside $M$}.  The \emph{center} of a von Neumann algebra is $M\cap M' $, denoted simply as $\mathcal{Z}(M) $.  Given von Neumann subalgebras $P,Q\subset M $, $P\vee Q $ denotes the smallest subalgebra of $M$ containing both $P$ and $Q$, and if $\set{P_i\subseteq M: i\in I } $ is a collection of von Neumann sub algebras of $M$, 
 $\bigvee_{i\in I} P_i  $ denotes
 the smallest subalgebra of $M$ containing $\bigcup_{i\in I}P_i $.  
 If $P\subseteq M $  is a von Neumann subalgebra, the \emph{normalizer} of $P$ inside $M$ is the subgroup of unitaries $\mathcal{N}_M(P)=\set{u\in \mathcal{U}(M) :  uPu^*=P} $.  A von Neumann algebra is \emph{tracial} if there exists a normal, faithful, tracial state $\tau:M\to \C $ and \emph{finite} if $\tau(1_M)=1 $.  When $M$ is a factor, any tracial state $\tau $ will be the unique such state. 

A von Neumann algebra $M$ is type II$_1 $ if $M$ is infinite dimensional and is tracial.  Whenever $M$  is a type II$_1$ factor,  $\tau(\mathcal{P}(M))=[0,1] $. For every scalar $0\leq t\leq 1 $, the \emph{amplification of $M$ by $t$} is the von Neumann algebra $M^t=pMp $, where $p\in \mathcal{P}(M)$ is any projection with $\tau(p)=t$.  It is well-known that the isomorphism class of $ M^t$ is independent of the choice of projection $p$ since for any projection $ q$ such that $\tau(p)=\tau(q)=t $, the algebras $pMp $ and $qMq $ are unitarily conjugate via a unitary in $M$.  We may extend the definition $M^t $ to all scalars  $t>0 $ by taking a sufficiently large integer $n$ and  projection $p$ in $M_n(M) $ with $\tau\circ\operatorname{Tr}_n(p)=t $, where $\operatorname{Tr}_n:M_n(\C)\to \C $ is the standard trace on $n\times n$ matrices.

To a  discrete countable group $\Gamma $, Murray and von Neumann describe how one associates  a von Neumann algebra via the \emph{left regular representation}: 
Consider the Hilbert space $\ell^2(\Gamma) $, the space of square summable sequences indexed by $\Gamma $.  For every $\gamma\in \Gamma$, we can define a unitary  $u_\gamma\in B(\ell^2 (\Gamma)) $ by linearly extending the map
$u_\gamma(\delta_g)=\delta_{\gamma g} $
where the collection $\set{\delta_g}_{g\in \Gamma}$ are the standard basis vectors for $\ell^2(\Gamma) $. The group von Neumann algebra $L(\Gamma)\subset B(\ell^2 (\Gamma)) $ is the von Neumann algebra generated by the  \emph{canonical unitaries}  $\set{u_\gamma}_{\gamma\in \Gamma} $. All groups considered herein will be discrete, unless specified otherwise.

Fix a group discrete  $\Gamma $.  We denote conjugation of $\gamma\in \Gamma $ by $\lambda\in \Gamma $ as $\gamma^\lambda=\lambda^{-1}\gamma \lambda$.  For each
  group element $ \gamma\in \Gamma$, and a subset $S\subset \Gamma $ we denote the orbit of $\gamma $ under conjugation by $S$ as $ \gamma^S=\set{\gamma^s: s\in S}$. A group is said to be of \emph{infinite conjugacy class}, hereby initialized as icc, if for every  $\gamma\in \Gamma\setminus \set{e} $ we have $|\gamma^\Gamma|=\infty $.  A well-know result  of Murray and  von Neumann states the algebra  $L(\Gamma) $ is a type II$_1$ factor  if and only if $\Gamma $ is an icc group \cite{MvN43}.  In this case, the  the unique normal faithful tracial state  $\tau : L(\Gamma)\to \C $ given by $\tau(x)=\generator{x\delta_e, \delta_e} $. We let
 $C_\Gamma(S) =\set{\gamma\in \Gamma: \gamma s =s\gamma\; \forall \, s\in S}$ denote the centralizer of a set $S\subset \Gamma $, and  $ \mathcal{V}_\Gamma(S)=\set{\gamma\in \Gamma :  |\lambda^S|<\infty}$ will denote the \emph{virtual centralizer of $S $ inside $\Gamma $}.  Given subsets $S,T\subset \Gamma $, $ST=\set{st  :  s\in S,\, t\in T} $ is the collection of all products of elements in $S $ and $T$.

\subsection{Inclusions of Algebras}
A von Neumann algebra $M$ is \emph{diffuse} if there does not exist a non-zero projection $p\in \mathcal{P}(M) $ so that $pMp=\C p $, i.e.~$M$ does not contain any minimal projections.  $M$ is \emph{prime} if $M$ cannot be written as a tensor product of any two diffuse von Neumann algebras.

Given a tracial von Neumann algebra $(M,\tau)$, we define $L^2(M) $ as the Hilbert space completion of $M$ with regards to the sesquilinear form $\tau:M\times M\to \C $ given by $\generator{x,y}=\tau(y^*x) $.   
Let $P\subset M$ be an inclusion tracial of von Neumann algebras with trace $\tau $. The {\it basic construction} \cite{Ch79} $\langle M,e_P\rangle\subset \mathbb B(L^2(M))$ is the smallest von Neumann algebra generated by $M$ and the orthogonal projection $e_P:L^2(M)\rightarrow L^2(P)$. $ \langle M,e_P\rangle$ is endowed with a faithful \emph{semi-finite} trace $Tr$ given by $Tr(xe_Py)=\tau(xy)$, for all $x,y\in M$. Also, we note that $E_P:={e_P}_{|M}:M\rightarrow P$ is the unique $\tau$-preserving conditional expectation onto $P$.



 If $P\subset M$ is an inclusion of von Neumann algebras, a state $\phi:M\to \C $ is said to be $P$-central if $\phi(mx)=\phi(xm) $ for every $x\in P $ and every $m\in M $.  A tracial von Newmann algebra $(M,\tau) $ is  \emph{amenable} if there exists an $M$-central state $\phi:B(L^2(M))\to \C $ so that $\phi|_M=\tau$.  By the celebrated result of A.~Connes, a von Neumann algebra is amenable if and only if it is approximately finite dimensional, i.e.~$M=\varinjlim M_n $ for an increasing sequence of finite dimensional algebras $M_n$\cite{Co76}.     
 In the spirit of  Popa's  intertwining techniques (see \ref{corner}), Ozawa and Popa described a relative version of this concept which proves useful in the analysis of structural properties of subalgebras.:

\begin{defn} \cite[Section 2.2]{OP07}
Let $(M,\tau)$ be a tracial von Neumann algebra, $p\in M$ a projection, and $P\subset pMp,Q\subset M$ von Neumann subalgebras. We say that $P$ is {\it amenable relative to $Q$ inside $M$} if there exists a $P$-central state $\phi:p\langle M,e_Q\rangle p\rightarrow\mathbb C$ such that $\phi(x)=\tau(x)$, for all $x\in pMp$.
\end{defn}
The classical notion of amenability is recovered by letting $ Q=\mathbb{C}$. More generally, if $Q$ is amenable and $P$ is amenable relative to $Q$, then $P$ is necessarily amenable. 
This definition is fruitful for numerous reasons, not the least of which its strong semblance to that definition of relative amenability for groups: 
as an amenable group Neumann algebra corresponds to an amenable group, so does relative amenability of  group von Neumann algebras coincide with relative amenability of the groups, i.e.~given a group $\Gamma $ and a pair of subgroups  $\Lambda_1,\Lambda_2\leqslant \Gamma $,  then  $\Lambda_1<\Gamma $ is amenable relative to $\Lambda_2 $ inside $\Gamma$ is amenable relative to $\Lambda_2 $ inside $\Gamma $  if and only if $L(\Lambda_1 )$ is amenable relative to $L(\Lambda_2)$ inside $L(\Gamma) $.
Should the ambient algebra $M$  be understood from context, we simply state that $P $ is amenable relative to $Q$.

Given an inclusion $P\subset M $ of II$_1$ factors, the \emph{Jones index} of $P\subset M$, denoted $[M:P]$, is the dimension of $L^2(M) $ as a (left) $P $ module.  For an arbitrary inclusion of tracial von Neumann algebras $P\subset M$, Pimnser and Popa defined a probabilistic notion of index via the conditional expectation from $M$ onto $P$.
\begin{defn}[Pimsner $\&$ Popa, \cite{PP86}] 
Let $(M,\tau)$ be a tracial von Neumann algebra  with a von Neumann subalgebra $P$. Let $$\lambda=\inf\;\{\|E_P(x)\|_2^2/\|x\|_2^2\;:\; x\in M_{+}\}.$$
The {\it Pimnser-Popa index of the inclusion $P\subseteq M$} is defined as $[M:P]_{\text{PP}}=\lambda^{-1}$, under the convention that $\frac{1}{0}=\infty$. 
\end{defn}

\begin{thrm}[\cite{Jo81, PP86}]\label{thrm:IndexofvnAlgebras}
Suppose  $P\subset M$ is an inclusion of tracial von Neumann algebras.  Then the following hold:
\begin{enumerate}
\item If $P\subset M $ is an inclusion of II$_1$ factors, then $[M:P]_{PP}=[M:P] $
\item If $[M:P]_{PP} <\infty $ and $p\in P $ is a projection, $[pMp:pPp]<\infty $;
\item If $P $ is a II$_1$ factor and $[M:P]_{PP}<\infty $ then $P'\cap M $ is finite dimensional;
\item If $P\subset M $ is an inclusion of II$_1$ factors with $[M:P]_{PP}<\infty $, then $\dim_\C (P'\cap M)<\infty$.  
\end{enumerate}
\end{thrm}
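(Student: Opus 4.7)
The plan is to prove the four claims in the order (2), (1), (3), (4): item (2) is a direct consequence of the definition, item (1) is the substantive result of Pimsner-Popa \cite{PP86}, and items (3) and (4) are corollaries obtained from a finite Pimsner-Popa basis expansion combined with (1).

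For (2), let $p \in P$ be a projection. Bimodularity of the conditional expectation gives $E_{pPp}(x) = p\,E_P(x)\,p = E_P(pxp) = E_P(x)$ for every $x \in pMp$, so the Pimsner-Popa inequality $E_P(x) \geq [M:P]_{PP}^{-1} x$ valid on $M_+$ restricts verbatim to the same inequality on $(pMp)_+$, yielding $[pMp:pPp]_{PP} \leq [M:P]_{PP} < \infty$. For (1), I would construct a Pimsner-Popa basis $\{m_j\}_{j\in J}\subset M$ satisfying $\sum_j m_j e_P m_j^* = 1$ in $\langle M, e_P\rangle$ and $E_P(m_j^* m_k) = \delta_{jk} f_j$ for suitable projections $f_j \in P$. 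When $P \subset M$ are II$_1$ factors, this basis simultaneously computes the Jones index $[M:P] = \dim_P L^2(M) = \sum_j \tau(f_j)$ and realizes the extremum of $\lambda = \inf_{x \in M_+}\|E_P(x)\|_2^2/\|x\|_2^2$, giving $\lambda^{-1} = \sum_j \tau(f_j) = [M:P]$.

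For (3), when $[M:P]_{PP} < \infty$ and $P$ is a II$_1$ factor, the Pimsner-Popa basis can be taken finite, say $\{m_1,\ldots, m_n\}$. Using the embedding $P' \cap M \hookrightarrow P' \cap \langle M, e_P\rangle$ together with a dimension count for the latter controlled by the cardinality of the basis and factoriality of $P$, one obtains $\dim_\C(P' \cap M) \leq n < \infty$. Part (4) then follows immediately: for II$_1$ factors $P \subset M$, (1) gives $[M:P]_{PP} = [M:P]$, so finiteness of the Pimsner-Popa index is the same as finiteness of the Jones index, and (3) supplies the desired finite-dimensionality.

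The main obstacle will be (1), where one must construct the Pimsner-Popa basis with the correct orthogonality, control its interaction with the semifinite trace $\operatorname{Tr}$ on $\langle M, e_P\rangle$, and verify that the same basis extremizes the Pimsner-Popa quotient while realizing the coupling constant. The delicate analytic step is showing that the infimum $\lambda$ is actually attained rather than only approached, which is the heart of \cite{PP86}. Once this identification $[M:P]_{PP} = [M:P]$ is established for subfactors, the remaining parts reduce to bimodularity considerations and finite-basis expansions.
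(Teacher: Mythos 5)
The paper does not prove this theorem at all: it is imported from \cite{Jo81, PP86} with a citation and no argument, so there is no internal proof for your proposal to be measured against. Judged on its own merits, your outline is a reasonable reconstruction of the classical Pimsner--Popa arguments, and you are right that (1) is the substantive input (a careful basis construction for subfactors) while the rest reduces to general nonsense about conditional expectations.

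A few things worth tightening. First, the paper defines $[M:P]_{PP}$ via the $L^2$-norm ratio $\inf_{x\in M_+}\|E_P(x)\|_2^2/\|x\|_2^2$, whereas your argument for (2) invokes the operator-level Pimsner--Popa inequality $E_P(x)\geq [M:P]_{PP}^{-1}x$. These two constants need not coincide for an arbitrary inclusion, so you should either stay within the paper's $L^2$ formulation --- which still works, since the identity $E_{pPp}=E_P|_{pMp}$ and the cancellation of the trace normalization show the infimum over $(pMp)_+$ dominates the infimum over $M_+$ --- or justify the passage to the operator inequality. Second, for (3) you do not actually need to extract a finite basis: if $P$ is a factor, then for any projection $q\in P'\cap M$ one has $E_P(q)\in\mathcal{Z}(P)=\C$, so $E_P(q)=\tau(q)1$, and the $L^2$ estimate gives $\tau(q)^2\geq [M:P]_{PP}^{-1}\tau(q)$, i.e.\ $\tau(q)\geq [M:P]_{PP}^{-1}$; a family of mutually orthogonal nonzero projections in $P'\cap M$ therefore has cardinality at most $[M:P]_{PP}$, forcing finite dimensionality with no basis in sight. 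This is more elementary and avoids the implicit claim that a finite basis exists whenever $[M:P]_{PP}<\infty$, which itself requires proof. Third, (4) is literally a special case of (3) --- if $P\subset M$ are both factors then in particular $P$ is a factor --- so the detour through (1) is harmless but superfluous. Finally, for (1) your basis argument handles the case where $[M:P]<\infty$; the converse implication $[M:P]_{PP}<\infty\Rightarrow [M:P]<\infty$ is what forces the basis to be finite in the first place, and you should flag that this too is part of what \cite{PP86} establishes rather than treating it as free.
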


Whenever $\Gamma $ a discrete group and $\Lambda$ a finite index subgroup, it follows  $[L(\Gamma):L(\Lambda)]_{PP} $ is finite as well.  In \cite{CdSS15}, the authors provide a generalized converse to this fact.
\begin{prop}[\cite{CdSS15}]
Let $\Omega\leqslant\Lambda\leqslant \Theta $ be groups such that there exists projections $p\in L(\Omega) $ and $z\in L(\Lambda) '\cap L(\Theta) $ so that $pz\neq 0 $ and $[pL(\Lambda) pz: pL(\Omega) pz]_{\text PP}<\infty $.  Then $[\Lambda:\Omega]<\infty $.
\end{prop}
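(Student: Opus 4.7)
The plan is to absorb $z$ into both algebras so that the compressed inclusion $pL(\Omega)pz\subset pL(\Lambda)pz$ becomes a corner of an honest basic construction, and then to compute its Pimsner--Popa index directly by exhibiting a coset-theoretic Pimsner--Popa basis; the computation will in fact yield the sharper equality $[pL(\Lambda)pz : pL(\Omega)pz]_{PP}=[\Lambda:\Omega]$, from which the proposition follows.

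First I would set $M:=L(\Lambda)\vee\{z\}''$ and $P:=L(\Omega)\vee\{z\}''$ inside $L(\Theta)$. Since $z\in L(\Lambda)'$, the projection $z$ is central in both $M$ and $P$, so one has the direct-sum decompositions $M=L(\Lambda)z\oplus L(\Lambda)(1-z)$ and $P=L(\Omega)z\oplus L(\Omega)(1-z)$, and the conditional expectation $E_P\colon M\to P$ restricts to $E_{L(\Omega)}$ on each summand. A direct check shows that $q:=pz\in P$ satisfies $qMq=pL(\Lambda)pz$ and $qPq=pL(\Omega)pz$. In the basic construction $\langle M,e_P\rangle$ with its canonical semi-finite trace $Tr$ (normalized by $Tr(ae_Pb)=\tau(ab)$ for $a,b\in M$), the projection $q$ commutes with $e_P$, and hence the corner $q\langle M,e_P\rangle q$ is canonically isomorphic to $\langle qMq, e_{qPq}\rangle$ with $qe_P\leftrightarrow e_{qPq}$. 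The standard scaling of the semi-finite trace on a compressed basic construction then gives
\[
[pL(\Lambda)pz : pL(\Omega)pz]_{PP}\;=\;\frac{Tr(q)}{Tr(qe_P)}\;=\;\frac{Tr(pz)}{\tau(pz)}.
\]

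The heart of the argument is to compute $Tr(pz)$. Choose any system of left coset representatives $\{\lambda_\alpha\}$ of $\Omega$ in $\Lambda$. Verifying on the generators $u_\mu$ and $u_\mu z$ of $M$ (treating each direct summand separately) shows that $\{u_{\lambda_\alpha}\}$ is still a Pimsner--Popa basis for $M$ over $P$, i.e.\ $\sum_\alpha u_{\lambda_\alpha}e_P u_{\lambda_\alpha}^{-1}=1_M$ in $\langle M,e_P\rangle$. Because $z\in L(\Omega)'$ commutes with $e_P$, using $Tr(ae_Pb)=\tau(ab)$ we obtain
\[
Tr(pz)\;=\;\sum_\alpha Tr\bigl(pu_{\lambda_\alpha}e_P u_{\lambda_\alpha}^{-1}z\bigr)\;=\;\sum_\alpha\tau\bigl(pu_{\lambda_\alpha}u_{\lambda_\alpha}^{-1}z\bigr)\;=\;[\Lambda:\Omega]\cdot\tau(pz).
\]
Substituting into the displayed identity yields $[pL(\Lambda)pz : pL(\Omega)pz]_{PP}=[\Lambda:\Omega]$. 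Since $pz\neq 0$ and $\tau$ is faithful on $L(\Theta)$, $\tau(pz)>0$, so finiteness of the left-hand side forces $[\Lambda:\Omega]<\infty$.

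The main technical step is verifying that $\{u_{\lambda_\alpha}\}$ remains a Pimsner--Popa basis after enlarging to $P\subset M$; this is the step one must be most careful about when $L(\Lambda)$ is not a factor, but it reduces transparently to the ordinary coset computation for $L(\Omega)\subset L(\Lambda)$ because the central projection $z$ splits the inclusion into two parallel pieces on which the argument is identical.
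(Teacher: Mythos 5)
Your plan --- identify the compressed Pimsner--Popa index with a trace in a basic construction computed via coset representatives --- has the right flavor, but it fails at precisely the step you flag as ``the main technical step.'' The assertion that a transversal $\{u_{\lambda_\alpha}\}$ of $\Omega$ in $\Lambda$ remains a Pimsner--Popa basis for $M=L(\Lambda)\vee\{z\}''$ over $P=L(\Omega)\vee\{z\}''$ is false in general. The hypothesis only places $z$ in $L(\Lambda)'\cap L(\Theta)$, which contains $\mathcal{Z}(L(\Lambda))$, so $z$ may already lie in $L(\Lambda)$ while lying outside $L(\Omega)$; in that case adjoining $z$ enlarges $P$ but not $M$, and the cosets over-count. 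Concretely, take $\Theta=\Lambda=\Zmod{2}\times\Zmod{2}$, $\Omega=\Zmod{2}\times\{e\}$, $p=1$, and $z$ a minimal projection of $L(\Lambda)\cong\C^4$: then $M=L(\Lambda)$ has dimension $4$ while $P=L(\Omega)\vee\{z\}''$ has dimension $3$, so the two projections $u_{\lambda_\alpha}e_Pu_{\lambda_\alpha}^{-1}$ cannot sum to $1$. The proposed ``reduction to two parallel pieces'' does not go through because the maps $x\mapsto xz$ on $L(\Omega)$ and on $L(\Lambda)$ have different kernels, so $L(\Omega)z\subset L(\Lambda)z$ is not again a group--subgroup inclusion of index $[\Lambda:\Omega]$, and the coset structure is not preserved on each central summand.

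The advertised sharper equality $[pL(\Lambda)pz:pL(\Omega)pz]_{PP}=[\Lambda:\Omega]$ is likewise false: in the example above $pL(\Lambda)pz=pL(\Omega)pz=\C z$, of Pimsner--Popa index $1$, while $[\Lambda:\Omega]=2$. You are also leaning on the identity equating the Pimsner--Popa index with $Tr$ of the identity in the basic construction, which is a theorem for inclusions of II$_1$ factors but not for general tracial inclusions, and nothing in the hypotheses makes $pL(\Lambda)pz$ or $pL(\Omega)pz$ a factor. A correct argument runs by contradiction in the other direction: assume $[\Lambda:\Omega]=\infty$, use the finite-index hypothesis to extract a finite Pimsner--Popa family for the inclusion $pL(\Omega)pz\subset pL(\Lambda)pz$ itself (not for the $z$-augmented algebras), and test the resulting expansion against the elements $pu_{\lambda_n}pz$ for an infinite sequence of coset representatives $\lambda_n$; a Fourier estimate then forces a contradiction. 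That yields the one-sided implication you need without ever exhibiting a basis for $L(\Omega)\vee\{z\}''\subset L(\Lambda)\vee\{z\}''$.
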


\section{Popa's intertwining techniques} 

To describe the structure of von Neumann algebras, Popa introduced a powerful new conceptual framework: \emph{deformation/rigidity theory}. This methodology includes a powerful criteria for identifying intertwiners between subalgebras of type II$_1$ factors, now called \emph{Popa's intertwining-by-bimodules techniques}. Much of the recent progress in classifying von Neumann  algebras can be  largely attributed to this philosophy.

\begin {thrm}[Popa, \cite{Po03}]\label{corner} Let $(M,\tau)$ be a separable tracial von Neumann algebra and $P,Q$ be two (not necessarily unital) von Neumann subalgebras of $M$. 
The following are equivalent:
\begin{enumerate}

\item There exist  non-zero projections $p\in P, q\in Q$, a $*$-homomorphism $\theta:pPp\rightarrow qQq$  and a non-zero partial isometry $v\in qMp$ such that $\theta(x)v=vx$, for all $x\in pPp$.

\item Let $ \mathcal{G}\subset P$ be a group of unitaries generating $P $. There is no sequence $u_n\in \mathcal{G}$ satisfying $\|E_Q(xu_ny)\|_2\rightarrow 0$, for all $x,y\in M$.
\end{enumerate}
\end{thrm}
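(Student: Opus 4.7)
The plan is to prove $(1)\Leftrightarrow(2)$ via an auxiliary equivalent condition $(c)$: \emph{there exists a non-zero closed $P$-$Q$ sub-bimodule $H\subset L^{2}(M)$ of finite right $Q$-dimension}. I will show $(1)\Leftrightarrow(c)$ via the standard subfactor correspondence between finite projections in the basic construction $\langle M, e_Q\rangle$ and right $Q$-submodules of $L^{2}(M)$ of finite right $Q$-dimension, and $(c)\Leftrightarrow(2)$ via a Pimsner-Popa basis argument in one direction and a center-of-mass argument in $\langle M, e_Q\rangle$ in the other.

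For $(c)\Rightarrow(2)$, fix $H$ as in $(c)$ together with a finite Pimsner-Popa orthonormal right $Q$-basis $\{\eta_1,\ldots,\eta_N\}\subset M$ of $H$, i.e.\ $E_Q(\eta_i^{*}\eta_j)=\delta_{ij}$ and
\[
\xi=\sum_{i=1}^{N}\eta_i\,E_Q(\eta_i^{*}\xi),\qquad \|\xi\|_2^{2}=\sum_{i=1}^{N}\|E_Q(\eta_i^{*}\xi)\|_2^{2}\qquad(\xi\in H).
\]
Pick any non-zero $\xi_0\in H\cap M$ and apply this identity to $u_n\xi_0\in H$ for $u_n\in\mathcal G\subset\mathcal U(P)$: it yields $\|\xi_0\|_2^{2}=\sum_i\|E_Q(\eta_i^{*}u_n\xi_0)\|_2^{2}$ for every $n$. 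If $(2)$ failed via this sequence, then taking $x=\eta_i^{*}$ and $y=\xi_0$ each summand $\|E_Q(\eta_i^{*}u_n\xi_0)\|_2$ tends to zero, forcing the finite sum and hence $\|\xi_0\|_2^{2}$ to vanish, a contradiction.

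For $(2)\Rightarrow(c)$, I run the center-of-mass construction. Let $K\subset L^{2}(\langle M,e_Q\rangle,\mathrm{Tr})$ be the $\|\cdot\|_{2,\mathrm{Tr}}$-closed convex hull of $\{u e_Q u^{*}:u\in\mathcal G\}$; since each generator has $\mathrm{Tr}$-norm $1$, $K$ is norm-bounded and admits a unique minimum-$\|\cdot\|_{2,\mathrm{Tr}}$ element $h$. Uniqueness of this minimum plus the $\mathcal G$-invariance of $K$ under $\zeta\mapsto u\zeta u^{*}$ forces $u h u^{*}=h$ for every $u\in\mathcal G$, so $h\in P'\cap\langle M, e_Q\rangle$ has finite trace. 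The decisive step is $h\neq 0$: otherwise convex combinations $h_k=\sum_i\lambda_i^{(k)}u_i^{(k)} e_Q (u_i^{(k)})^{*}$ exist with $\|h_k\|_{2,\mathrm{Tr}}\to 0$, and the exact expansion
\[
\|h_k\|_{2,\mathrm{Tr}}^{2}=\sum_{i,j}\lambda_i^{(k)}\lambda_j^{(k)}\,\|E_Q((u_i^{(k)})^{*}u_j^{(k)})\|_2^{2},
\]
combined with a diagonalization over a countable $\|\cdot\|_{2}$-dense subset of $M$, extracts a sequence $u_n\in\mathcal G$ along which $\|E_Q(xu_ny)\|_2\to 0$ for every $x,y\in M$, contradicting $(2)$. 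Once $h\neq 0$, a non-zero spectral projection $f=\chi_{[\varepsilon,\|h\|]}(h)\in P'\cap\langle M,e_Q\rangle$ is a finite-trace projection, and the standard bijection identifies $f$ with the orthogonal projection onto the sought non-zero $P$-invariant right $Q$-submodule $H\subset L^{2}(M)$ of finite right $Q$-dimension. The equivalence $(1)\Leftrightarrow(c)$ is classical: given $(c)$, polar decomposition of a non-zero element of $H$ together with the finite right $Q$-dimensionality of $H$ cuts the left $P$-action down to a $*$-homomorphism $\theta$ into a corner of $Q$ and produces the partial isometry $v$; conversely, from $(1)$ the closed $P$-$Q$ sub-bimodule generated by $v$ inherits finite right $Q$-dimension from $\theta(x)v=vx$.

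The main obstacle is the extraction in $(2)\Rightarrow(c)$: passing from arbitrarily small convex combinations of $\{ue_Qu^{*}:u\in\mathcal G\}$ in $\|\cdot\|_{2,\mathrm{Tr}}$ to a single sequence $u_n$ that makes $\|E_Q(xu_ny)\|_2$ decay \emph{simultaneously} for every $x,y\in M$; this is a probabilistic diagonalization exploiting separability of $M$. The remaining content is standard subfactor bimodule theory: identifying finite projections in $\langle M, e_Q\rangle$ with right $Q$-submodules of $L^{2}(M)$ of finite right $Q$-dimension, and reading off $(\theta,v)$ from such a bimodule via polar decomposition.
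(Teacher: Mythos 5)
The proposal follows the standard route for Popa's intertwining theorem: the equivalence with condition $(c)$ (existence of a non-zero finite-right-dimension $P$-$Q$ sub-bimodule of $L^{2}(M)$, equivalently a non-zero finite-trace projection in $P'\cap\langle M,e_Q\rangle$), with $(c)\Rightarrow(2)$ via a Pimsner--Popa basis, $(2)\Rightarrow(c)$ via a center-of-mass argument, and $(1)\Leftrightarrow(c)$ via bimodule theory. The bimodule direction and the Pimsner--Popa basis direction are essentially right (you should note that the basis elements $\eta_i$ live a priori only in $L^{2}(M)$ rather than $M$, which can be handled by a standard $\|\cdot\|_{2}$-approximation, or avoided entirely by instead approximating the projection $f\in P'\cap\langle M,e_Q\rangle$ in $\|\cdot\|_{2,\mathrm{Tr}}$ by elements $\sum_i x_i e_Q y_i$ and using $\mathrm{Tr}(f)=\mathrm{Tr}(u_nfu_n^{*}f)$).

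However, your $(2)\Rightarrow(c)$ step has a genuine gap. You form the convex hull of $\{u e_Q u^{*}:u\in\mathcal G\}$ and obtain the exact expansion $\|h_k\|_{2,\mathrm{Tr}}^{2}=\sum_{i,j}\lambda_i\lambda_j\|E_Q((u_i^{(k)})^{*}u_j^{(k)})\|_2^{2}$. But this expression involves only $\|E_Q(w)\|_2$ for unitaries $w=u_i^{*}u_j\in\mathcal G$; it carries \emph{no} information about $\|E_Q(xwy)\|_2$ for $x,y\in M$, so the asserted "diagonalization over a countable $\|\cdot\|_2$-dense subset of $M$" has nothing to act on. Moreover, the weaker conclusion that actually follows from this expansion --- some sequence $v_n\in\mathcal G$ with $\|E_Q(v_n)\|_2\to 0$ --- is \emph{not} a negation of $(2)$, so it yields no contradiction. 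Concretely: take $\Lambda<\Gamma$ of finite index $N\geq 2$, $M=P=L(\Gamma)$, $Q=L(\Lambda)$, $\mathcal G=\{u_\gamma\}_{\gamma\in\Gamma}$. Here there are plenty of canonical unitaries with $E_Q(u_\gamma)=0$, yet conditions $(1)$, $(2)$, $(c)$ all hold; this shows that "$\exists v_n$ with $\|E_Q(v_n)\|_2\to 0$" is compatible with $(2)$, so deducing $\neg(2)$ from it (as your write-up does) is invalid. The actual implication $h=0\Rightarrow\neg(2)$ cannot be proved via the $T=e_Q$ orbit alone.

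The fix is to run the center-of-mass argument over the operators $T_F=\sum_{x\in F}x^{*}e_Q x$, where $F$ ranges over finite subsets of a fixed countable $\|\cdot\|_{2}$-dense set in $M$. Each $T_F$ is positive with $\mathrm{Tr}(T_F^{2})<\infty$, and the minimum-norm element $h_F$ of $\overline{\mathrm{conv}}^{\|\cdot\|_{2,\mathrm{Tr}}}\{uT_Fu^{*}:u\in\mathcal G\}$ again lies in $P'\cap\langle M,e_Q\rangle$. If some $h_F\neq 0$ then a spectral projection of $h_F$ gives $(c)$. If $h_F=0$ for every $F$, the identity
\[
\mathrm{Tr}\bigl(u_jT_Fu_j^{*}\,T_F\bigr)=\sum_{x,x'\in F}\bigl\|E_Q\bigl(x'u_jx^{*}\bigr)\bigr\|_2^{2}
\]
together with Cauchy--Schwarz, pigeonhole, and a diagonalization over increasing $F_n$ and $\varepsilon_n\to 0$, produces a genuine witness $u_n\in\mathcal G$ with $\|E_Q(xu_ny)\|_2\to 0$ for all $x,y\in M$, contradicting $(2)$. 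This is the step your write-up is missing.
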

If either of the equivalent conditions hold, we say \emph{$P$ intertwines into $Q$ over $M$}, denoted $P\prec_M Q $.  Should $Pz\prec_M Q$ for every $z
\in P'\cap M $, then we say $P$ \emph{strongly intertwines into $Q$} and denote this property by $P\prec_M^s Q $.  Whenever the ambient algebra $M$ is clear from context, we suppress the subscript and write $P\prec Q $ (or $P\prec^s Q$) whenever $P$ (strongly) intertwines into $Q$.

Fix a trace preserving action $\Gamma\curvearrowright (N,\tau) $ of a group $\Gamma $ on a tracial von Neumann algebra $(N,\tau) $  and let $S $ be a collection of subgroups  of $\Gamma $.  A set  $\mathcal{F}\subset \Gamma $ is \emph{small relative to} $\mathcal{S}$ if $\mathcal{F} $  is contained in  a finite union of left/right translates of groups in $\mathcal{S} $, i.e.~there exist a finite collection  $g_1,\ldots, g_j, h_1,\ldots h_j\in \Gamma $, $ \Sigma_i\in S$ so that $\mathcal{F}\subset \bigcup_{i=1}^jg_i\Sigma _i $ \cite{BO08}.  Clearly, if $ S$ is a collection of normal subgroups of $\Gamma $, then we need only consider right translates of $S$.  
A resource detailing  the interplay between small sets and  intertwining techniques can be found in \cite{Va10}.

In general, if $P_1, P_2\subset M $ are subalgebras such that $P_1 $ and $P_2 $ both intertwine into a common algebra $Q$, one cannot make any conclusion about $P_1\cap P_2 $ or $ P_1\vee P_2$ without  additional assumptions.  We proceed by describing an instance where if one has multiple commuting subalgebras $P_1,\ldots, P_k $ of $M$ each of which strongly intertwine into  subalgebras $Q_1,\ldots, Q_k $ respectively, one can construct a common subalgebra $Q\subset M$ where $\bigvee_{i=1}^k{P_i} \prec Q $.
\begin{prop}\label{prop:StrongIntertwiningMultiple}
Let $\Gamma\curvearrowright (N ,\tau) $ be a trace preserving action of a group on a II$_1$ factor $N$ and denote by $M=N \rtimes \Gamma $.  

 Suppose there exists normal  subgroups $\Sigma_1,\ldots, \Sigma_k\triangleleft\Gamma $ and pairwise commuting subalgebras $P_1,\ldots, P_k\subset M $ so that $P_i\prec_M^s N\rtimes \Sigma_i  $. Then if $ \Sigma=\Sigma_1\cdots \Sigma_k$,
\begin{align*}
\bigvee_{i=1}^kA_i \prec_M N\rtimes \Sigma.
\end{align*}
\end{prop}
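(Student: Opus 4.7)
The plan is to prove the proposition via Popa's intertwining criterion, using a sharpening that is available for crossed products by a normal subgroup. For any $\Sigma\triangleleft \Gamma$, the relation $P\prec_M N\rtimes \Sigma$ is equivalent to the existence of a finite set $F\subset \Gamma$ and $\delta>0$ with $\|E_{N\rtimes F\Sigma}(v)\|_2\geq \delta$ for every $v\in \mathcal U(P)$. One direction uses the identity
\begin{align*}
\sum_{g\in F}\|E_{N\rtimes\Sigma}(u_g^* v)\|_2^2 \;=\; \|E_{N\rtimes F\Sigma}(v)\|_2^2,
\end{align*}
which converts a lower bound on the LHS into Popa's second criterion with $x_g=u_g^*,\;y_g=1$. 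The converse absorbs the Fourier supports of the witnessing elements $x_j,y_j$ into a single finite set, using normality to rewrite any product of the form $F\Sigma F'$ as $(FF')\Sigma$.

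Applying this equivalence to each hypothesis $P_i\prec_M^s N\rtimes \Sigma_i$, I would extract finite sets $F_i\subset\Gamma$ and constants $\delta_i>0$ with $\|E_{N\rtimes F_i\Sigma_i}(v_i)\|_2\geq \delta_i$ for every $v_i\in \mathcal U(P_i)$. Because the $P_i$ pairwise commute, the set $\mathcal G := \{v_1 v_2\cdots v_k\,:\,v_i\in\mathcal U(P_i)\}$ is a unitary subgroup of $M$ whose weak closure equals $\bigvee_{i=1}^k P_i$, so by Popa's second criterion it suffices to establish a uniform lower bound of the form $\|E_{N\rtimes F\Sigma}(w)\|_2\geq \delta$ on elements $w\in\mathcal G$, for some finite $F\subset\Gamma$ and $\delta>0$.

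Setting $w_i := E_{N\rtimes F_i\Sigma_i}(v_i)$ and $F := F_1 F_2\cdots F_k$, the pivotal observation is that the ``main term'' $w_1 w_2\cdots w_k$ has Fourier support contained in $(F_1\Sigma_1)(F_2\Sigma_2)\cdots (F_k\Sigma_k)$, which by iterated use of the normality of the $\Sigma_i$ is exactly $F_1 F_2\cdots F_k\cdot \Sigma_1\Sigma_2\cdots\Sigma_k = F\Sigma$ (each $\Sigma_i$ can be shuttled past the subsequent $F_j$'s without enlargement). Consequently $w_1\cdots w_k\in N\rtimes F\Sigma$, with a norm bound controllable in terms of the $\delta_i$.

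The main obstacle is the transition from the obvious estimate on the norm of $w_1\cdots w_k$ to one on $\|E_{N\rtimes F\Sigma}(v_1\cdots v_k)\|_2$: the factors $v_i$ and $w_i$ differ, and cross-terms arising from $(v_i-w_i)$ could in principle cancel the main term. I would handle this by induction on $k$. The case $k=1$ is the hypothesis itself. For the inductive step, the \emph{strong} hypothesis is essential: upgrading, via a relative-commutant/central-corner argument, the intertwining of $\bigvee_{i<k}P_i$ into $N\rtimes(\Sigma_1\cdots\Sigma_{k-1})$ to a strong intertwining allows one to absorb the $k$-th factor without losing uniform control, reducing to a clean two-factor version in which the error bound is seen directly from orthogonality of the Fourier components and the $(N\rtimes\Sigma)$-bimodular property of $E_{N\rtimes\Sigma}$. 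Once the uniform lower bound on $\mathcal G$ is in place, reversing the identity from the first paragraph yields $\bigvee_{i=1}^k P_i\prec_M N\rtimes \Sigma$, as claimed.
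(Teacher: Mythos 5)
Your high-level framework is the same as the paper's: both proofs exploit the ``small-set'' characterization of strong intertwining for $P_i \prec^s_M N\rtimes\Sigma_i$, use normality of the $\Sigma_i$ to see that products of small sets stay small relative to $\Sigma = \Sigma_1\cdots\Sigma_k$, and conclude via Popa's criterion by testing $E_{N\rtimes\Sigma}$ on the generating family $\{v_1\cdots v_k : v_i\in\mathcal{U}(P_i)\}$.

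The gap is in the mechanism you propose for getting the uniform estimate on products. You set up an induction on $k$ in which the inductive hypothesis gives only $\bigvee_{i<k}P_i \prec_M N\rtimes(\Sigma_1\cdots\Sigma_{k-1})$ (the conclusion of the proposition is plain intertwining, not strong), and you then propose to ``upgrade'' this to strong intertwining ``via a relative-commutant/central-corner argument.'' No such upgrade exists: plain intertwining is strictly weaker, and passing to a central corner where the intertwining holds is exactly what discards the uniform control over the whole unit ball that you need for the $k$-th factor. As written, the inductive step cannot close.

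There are two correct ways around this, and the paper takes the second. (1) Strengthen the inductive claim to $\bigvee_{i\leq m}P_i \prec^s_M N\rtimes(\Sigma_1\cdots\Sigma_m)$; then the inductive hypothesis already provides the uniform small-set projection you want, and the $k=2$ reduction goes through cleanly. (2) Avoid induction entirely: fix $\varepsilon>0$, use $P_i\prec^s$ to find for each $i$ a set $\mathcal{F}_i$ contained in $j_i$ translates of $\Sigma_i$ so that $\|a_i - \mathcal{P}_{\mathcal{F}_i}(a_i)\|_2 < \varepsilon/(kj_1\cdots j_{i-1})$ on the unit ball of $P_i$, and then run a telescoping estimate
\begin{align*}
\|a_1\cdots a_k - \mathcal{P}_{\mathcal{F}_1}(a_1)\cdots\mathcal{P}_{\mathcal{F}_k}(a_k)\|_2
\;\leq\;
\sum_{i=1}^{k}\Bigl(\textstyle\prod_{\ell<i}j_\ell\Bigr)\,\|a_i-\mathcal{P}_{\mathcal{F}_i}(a_i)\|_2 \;<\;\varepsilon ,
\end{align*}
where the factor $\prod_{\ell<i}j_\ell$ comes from the operator-norm bound $\|\mathcal{P}_{\mathcal{F}_\ell}(a_\ell)\|_\infty \leq j_\ell\|a_\ell\|_\infty$. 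Since $\mathcal{F}_1\cdots\mathcal{F}_k$ is small relative to the normal subgroup $\Sigma$, one gets the required lower bound on $\sum_i\|E_{N\rtimes\Sigma}(wu_{\lambda_i})\|_2^2$ for the generating unitaries $w$, and Popa's criterion finishes. The ``cross-term cancellation'' you worry about never arises because the entire estimate lives in $\|\cdot\|_2$ and the projections $\mathcal{P}_{\mathcal{F}}$ only improve the norm; no delicate bookkeeping of Fourier cross-terms is needed.

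A smaller imprecision: your opening equivalence — $P\prec_M N\rtimes\Sigma$ iff there is a finite $F$ and $\delta>0$ with $\|E_{N\rtimes F\Sigma}(v)\|_2\geq\delta$ for \emph{every} $v\in\mathcal{U}(P)$ — is actually the characterization of $P\prec^s_M N\rtimes\Sigma$, not of plain $\prec$ (think of $P = P_1\oplus P_2$ with $P_1\prec$ and $P_2\not\prec$). This does not damage your argument directly since the hypotheses are strong intertwinings and the forward direction of the equivalence is all you need for the conclusion, but it should be stated correctly if you keep this route.
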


\begin{proof}
 Given a set $\mathcal{F}\subset \Gamma$, $P_\mathcal{F} $ is the orthogonal projection from $L^2(M) $ to the closed linear span of $\set{n_gu_g : n_g\in n, g\in \mathcal{F}} $.
Fixing $1>\varepsilon>0 $,   $P_1\prec_M^s N\rtimes \Sigma_1 $ implies  there exists  a set $\mathcal{F}_1 \subset \bigcup_{i=1}^j \Sigma_1 \gamma_i $ small relative to $\Sigma_1 $ so that
$$\norm{a_1-\mathcal{P}_{\mathcal{F}_1}(a_1)}<\varepsilon/2. $$ 
Recursively choose $\mathcal{F}_i\subset \cup_{j=1}^{j_i} \Sigma_i\gamma_{j,i} $ small relative to $\Sigma_i $ so that whenever $a_i\in A_i $ with $\norm{a_i}\leq 1 $ we have
\begin{align}\label{eq:ProjectionEstimate}
\norm{a_i- \mathcal{P}_{\mathcal{F}_i} (a_i)}\leq \varepsilon/(kj_1\cdots j_{i-1}).
\end{align}
Now since the algebras $P_1,\ldots, P_k $ pairwise commute, we see
\begin{align}\label{eq:ProjectionCommutEstiname}
\norm{a_1\cdots a_k- \mathcal{P}_\mathcal{F}(a_1\cdots a_k)}_2\leq& \norm{a_1\cdots a_k -\mathcal{P}_{\mathcal{F}_1}(a_1)\cdots \mathcal{P}_{\mathcal{F}_k}(a_k)}_2
\end{align}
Combining equations \ref{eq:ProjectionEstimate} and \eqref{eq:ProjectionCommutEstiname}, we now have
\begin{align*}
\norm{a_1\cdots a_k- \mathcal{P}_\mathcal{F}(a_1\cdots a_k)}_2\leq&
\norm{(a_1-\mathcal{P}_{\mathcal{F}_1}(a_1))(a_2\cdots a_k)}_2 \\\nonumber
&+\norm{\mathcal{P}_{\mathcal{F}_1}(a_1)(a_2\cdots a_k -\mathcal{P}_{\mathcal{F}_2}(a_1)\cdots \mathcal{P}_{\mathcal{F}_{k}  }(a_k)  )}_2\\
\leq & \varepsilon/k +\norm{\mathcal{P}_{\mathcal{F}_1}(a_1)(a_2\cdots a_k -\mathcal{P}_{\mathcal{F}_2}(a_1)\cdots \mathcal{P}_{\mathcal{F}_{k}  }(a_k)  )}_2.
\end{align*}
Since $\mathcal{F}_1 $ is contained $j_1$ left translates of $\Sigma_1 $, $\norm{\mathcal{P}_{\mathcal{F}_1}(a_1)}_2\leq j_1\norm{a_1}_2 $.  Thus the  inequality above implies
\begin{align*}
\norm{a_1\cdots a_k- \mathcal{P}_\mathcal{F}(a_1\cdots a_k)}_2\leq & \varepsilon/k + j_1\norm{a_1}\norm{a_2\cdots a_k -\mathcal{P}_{\mathcal{F}_2}(a_1)\cdots \mathcal{P}_{\mathcal{F}_{k}  }(a_k)  }_2\\
\leq & \varepsilon/k + j_1\norm{(a_2-\mathcal{P}_{\mathcal{F}_2}(a_2))(a_3\cdots a_k)}_2  \\
&+j_1 \norm{\mathcal{P}_{\mathcal{F}_2}(a_2)(a_3\cdots a_k-\mathcal{P}_{\mathcal{F}_3}(a_3)\cdots\mathcal{P}_{\mathcal{F}_k}(a_k))}_2 \\
\leq & \varepsilon/k+ j_1\norm{a_2-\mathcal{P}_{\mathcal{F}_2}(a_2)}_2\\
 &j_1 \norm{\mathcal{P}_{\mathcal{F}_2}(a_2)(a_3\cdots a_k-\mathcal{P}_{\mathcal{F}_3}(a_3)\cdots\mathcal{P}_{\mathcal{F}_k}(a_k))}_2 \\
 \leq & 2\varepsilon/k +j_1j_2\norm{a_3\cdots a_k -\mathcal{P}_{\mathcal{F}_3}(a_3)\cdots \mathcal{P}_{\mathcal{F}_k}(a_k)}_2.
\end{align*}
Repeated application of this procedure will then yield
\begin{align}
\norm{a_1\cdots a_k- \mathcal{P}_\mathcal{F}(a_1\cdots a_k)}_2<\varepsilon.\label{eq:NormPositive}
\end{align}
Note that the normality of the groups $\Sigma_i $ yield that the set $\mathcal{F}=\mathcal{F}_1\cdots\mathcal{F}_k $ is contained in a finite union of right translates of $\Sigma=\Sigma_1\cdots \Sigma_k\triangleleft \Gamma$. Thus there exist $\lambda_1,\cdots,\lambda_n\in \Gamma $ so that
\begin{align*}
\mathcal{F}\subset \bigcup_{i=1}^n \Sigma\lambda_i.
\end{align*} 
Thus $\norm{\sum_{i=1}^nP_{\mathcal{F}} ( a_1\cdots a_ku_{\lambda_{i}} )  }^2_2\geq \norm{\mathcal{P}_\mathcal{F}(a_1\cdots a_k)}_2^2>1-\varepsilon$ for every $a_-\in P_i$ with $\norm{a_i}\leq 1 $.  
\begin{align*}
\sum_{i=1}^n \norm{E_{N\rtimes \Sigma}(w u_{\lambda_i})}_2^2
   \geq \norm{\sum_{i=1}^n E_{N\rtimes \Sigma}(wu_{\lambda_i})}_2^2>1-\varepsilon
\end{align*} 
where $w $ is any unitary of the form $w=u_1\cdots u_k $ with $u_i\in \mathcal{U}(P_i) $.  As unitaries of this form generate $ P_1\vee\cdots \vee P_k$,  Theorem \ref{corner} establishes the result.
\end{proof}


\section{Finite Step Extensions of Groups}
Fixing a class of groups $\mathcal{C} $, a discrete countable group $\Gamma $ is a \emph{finite step extension by $\mathcal{C}$} if there exists a
exists  a chain of groups and homomorphisms
\begin{align}\label{eq:QuotnC}
\Gamma=\Gamma_n \overset{\pi_n}\rightarrow \Gamma_{n-1} \overset{\pi_{n-1}}\rightarrow\cdots \overset{\pi_2}\rightarrow\Gamma_1\overset{\pi_1}\rightarrow 1
\end{align}
such that for every $1\leq k\leq n $, $\pi_k:\Gamma_k\to \Gamma_{k-1} $ is a  surjective homomorphism with $\ker(\pi_k)\in \mathcal{C}$.
For each $ n\in \N$, $\Quot_n(\mathcal{C}) $ denotes the collection of all groups which are  $n$-step extensions by $\mathcal{C} $. We denote the collection of all finite step extension by $\mathcal{C} $ by  $ \Quot(\mathcal{C})=\cup \Quot_n(\mathcal{C}) $.  
Combining the results in both \cite{CIK13} and \cite{CKP14},  we have the following facts for the collection of groups $\Quot(\mathcal{C}) $.

\begin{prop}[\cite{CIK13,CKP14}]\label{prop:QuotProperties}
Let $\mathcal{C} $ be any class of groups.  
\begin{enumerate}
\item If $\rho: \Lambda\to \Gamma $ with $\Gamma\in \Quot_n(\mathcal{C}) $ and $ \ker(\rho)\in \mathcal{C}$, then $\Lambda\in \Quot_{n+1} (\mathcal{C})$.
\item If $\Gamma_i\in \Quot_{n_i}(\mathcal{C}) $,  $\Gamma_1\times \cdots\times \Gamma_k\in \Quot_{n_1+\cdots+n_k} (\mathcal{C})$.
\item If $\mathcal{C} $ is closed under commensurability (up to finite kernel), then so is $\Quot_n(\mathcal{C}) $.
\item If $\Gamma\in \Quot_n(\mathcal{C}) $ with $\pi_n:\Gamma_k\to \Gamma_{k-1} $ a family as in the definition of $\Quot_n(\mathcal{C}) $ and $\rho_k:=\pi_2\circ\cdots \circ\pi_n $, then $\ker \rho_n\in \Quot_{n-1}(C) $.
\end{enumerate}
\end{prop}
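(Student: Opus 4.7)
The plan is to handle each part by directly manipulating the defining chain \eqref{eq:QuotnC} of surjections, either extending, splicing, restricting, or slicing it in a way that keeps successive kernels in $\mathcal{C}$. For (1), given a surjection $\rho:\Lambda\to\Gamma$ with $\ker\rho\in\mathcal{C}$ and a chain $\Gamma=\Gamma_n \overset{\pi_n}\to\cdots\to\Gamma_1\to 1$ witnessing $\Gamma\in\Quot_n(\mathcal{C})$, I would simply prepend $\rho$ to obtain the chain $\Lambda \overset{\rho}\to \Gamma_n \overset{\pi_n}\to \cdots \to \Gamma_1 \to 1$ of length $n+1$ for $\Lambda$, which is immediate.

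For (2), I induct on $k$; the essential case $k=2$ is handled by splicing. Given chains $\Gamma_1=A_{n_1}\overset{\alpha_{n_1}}\to\cdots\to A_1\to 1$ and $\Gamma_2=B_{n_2}\overset{\beta_{n_2}}\to\cdots\to B_1\to 1$, one collapses the first factor completely before touching the second via
\begin{equation*}
A_{n_1}\times B_{n_2}\to A_{n_1-1}\times B_{n_2}\to\cdots\to B_{n_2}\to B_{n_2-1}\to\cdots\to 1,
\end{equation*}
whose successive kernels are the $\ker\alpha_j$ followed by the $\ker\beta_j$, all in $\mathcal{C}$, and whose total length is $n_1+n_2$.

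Part (3) is where the main subtlety lies, and I expect it to be the principal obstacle. By transitivity and symmetry of commensurability it suffices to check closure under two elementary moves: passing to a finite-index subgroup $\Lambda_0\leq\Gamma$, and passing to a quotient $\Gamma/F$ by a finite normal subgroup $F$. In the subgroup case, writing $\Lambda_0^{(k)}$ for the image of $\Lambda_0$ under $\pi_{k+1}\circ\cdots\circ\pi_n$, the restricted chain has step-$k$ kernel $\ker\pi_k\cap\Lambda_0^{(k)}$, which is a finite-index subgroup of $\ker\pi_k\in\mathcal{C}$ and hence lies in $\mathcal{C}$ by the commensurability hypothesis. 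The quotient case is symmetric, using that the successive images of $F$ along the chain remain finite normal subgroups and that $\mathcal{C}$ is stable under quotienting by such. The delicate point is that the closure hypothesis on $\mathcal{C}$ must be invoked at every single step of the chain, so it is used essentially $n$ times.

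Finally, for (4), I would restrict the chain of $\Gamma$ to the normal subgroup $N:=\ker\rho_n$, where $\rho_n=\pi_2\circ\cdots\circ\pi_n$. Setting $N_k:=(\pi_{k+1}\circ\cdots\circ\pi_n)(N)$, surjectivity of the intermediate $\pi_j$ identifies $N_k$ with $\ker(\pi_2\circ\cdots\circ\pi_k)\triangleleft\Gamma_k$, and each restricted map $\pi_k|_{N_k}:N_k\to N_{k-1}$ remains surjective because $\ker\pi_k\subseteq N_k$. The step-$k$ kernels are then exactly $\ker\pi_k\in\mathcal{C}$ for $2\leq k\leq n$, producing a chain of length $n-1$ from $N$ down to the trivial group, certifying $N\in\Quot_{n-1}(\mathcal{C})$.
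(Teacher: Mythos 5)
The paper itself gives no proof of this proposition; it is stated with citations to earlier work, so there is nothing to compare against directly. Your arguments for (1), (2), and (4) are correct and standard: prepending, splicing-then-collapsing one factor at a time, and restricting the chain to the kernel $N=\ker\rho_n$ (the identification $N_k=\ker(\pi_2\circ\cdots\circ\pi_k)$ and the surjectivity of $\pi_k|_{N_k}$ because $\ker\pi_k\subseteq N_k$ are exactly the right observations).

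The argument for (3), however, has a genuine gap. You assert that ``by transitivity and symmetry of commensurability it suffices to check closure under two elementary moves: passing to a finite-index subgroup, and passing to a quotient by a finite normal subgroup.'' This reduction is false. To show $\Quot_n(\mathcal{C})$ is closed under commensurability one must also handle the \emph{upward} moves: passing to a finite-index overgroup, and passing to a group that covers $\Gamma$ with finite kernel. Symmetry of the commensurability relation does not make these reducible to the downward moves; it only says the relation is symmetric, not that the closure of a class under ``going down'' implies closure under ``going up.'' Concretely, given $\Gamma\in\Quot_n(\mathcal{C})$ and $H$ commensurable to $\Gamma$, one finds finite-index $\Gamma_0\leq\Gamma$ and $H_0\leq H$ with $\Gamma_0\cong H_0$; your argument gives $H_0\in\Quot_n(\mathcal{C})$, but one still needs to conclude $H\in\Quot_n(\mathcal{C})$, which is the overgroup step you never address.

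The overgroup step is not merely a missing formality. With the chain definition as stated (the strict definition given in \eqref{eq:QuotnC}, before the paper's later relaxation in Definition~\ref{defn:QuotnCCommensurable}), it can actually fail. Take $\mathcal{C}$ to be the class of groups commensurable (up to finite kernel) to free groups, and let $\Gamma=\mathbb{F}_2\times\mathbb{F}_2\in\Quot_2(\mathcal{C})$, $H=(\mathbb{F}_2\times\mathbb{F}_2)\rtimes(\Z/2\Z)$ with $\Z/2\Z$ acting by swapping the two free factors, so that $\Gamma\leq H$ has index two. Any $\sigma$-invariant nontrivial normal subgroup of $\mathbb{F}_2\times\mathbb{F}_2$ must meet both coordinate factors nontrivially and hence contains a $\Z^2$, so is not in $\mathcal{C}$; and $H$ has trivial center and no nontrivial finite normal subgroup. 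Therefore the only normal subgroup of $H$ lying in $\mathcal{C}$ is the trivial one, and no chain $H\twoheadrightarrow H_1\twoheadrightarrow 1$ with kernels in $\mathcal{C}$ exists, so $H\notin\Quot_2(\mathcal{C})$ under the strict definition even though it is commensurable to $\Gamma$. This shows that a correct treatment of (3) must either work with the relaxed definition (where the claim becomes essentially built-in) or confront the overgroup difficulty head-on rather than dismiss it by ``symmetry.'' You correctly identified (3) as the delicate part, but the specific reduction you proposed does not survive scrutiny.
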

A generalization of part (4) of Proposition \ref{prop:QuotProperties} may be verified using the following construction.    Given any group $\Gamma\in \Quot_n(\mathcal{C})$ with $ n>2 $ we may recursively define the following family of groups:  First let $\Gamma_n^{(0)}=\Gamma_n $. For $0<j\leq n-1 $, suppose we have
 $$\Gamma^{(j-1)}_n\overset{\pi_n}\to \Gamma_{n-1}^{(j-1)}\overset{\pi_{n-1}}\to\cdots \overset{\pi_{j+1}}\to \Gamma_{j}^{(j-1)}\to 1 $$
with be a sequence of surjections with $\ker(\pi_i)\in \mathcal{C} $. Defining $\rho_k^{(j-1)}=\pi_{j+1}\circ \cdots\circ \pi_k $ and $\Gamma^{(j)}_k=\ker\rho^{(j-1)}_k $, by appropriately restricting $\pi_k $ we now have
 $$\Gamma^{(j)}_n\overset{\pi_n}\to \Gamma_{n-1}^{(j)}\overset{\pi_{n-1}}\to\cdots \overset{\pi_{j+1}}\to \Gamma_{j}^{(j)}= 1 $$
is a chain satisfying the conditions implying $\Gamma_n^{(j)}\in \Quot_{n-j}(\mathcal{C}) $.  More generally we have for $0\leq j< k\leq n $:
\begin{itemize}
\item $\Gamma_k^{(j)}\in \Quot_{k-j}(\mathcal{C}) $,
\item $\Gamma_k^{(j)}\triangleright \Gamma_k^{(j-1)} $,
\item $\Gamma_k^{(j)}/ \Gamma_k^{(j-1)}\in \mathcal{C} $,
\item $\Gamma_n\triangleright \Gamma_n^{(1)}\triangleright \cdots \triangleright \Gamma_n^{(n-1)}\triangleright 1 $ with $\Gamma_{n-1}^{(j)}/ \Gamma_{n}^{(j+1)}\in \mathcal{C} $
\end{itemize}
Hence an equivalent characterization of $\Gamma\in \Quot_n(\mathcal{C})  $ is  $\Gamma $ is \emph{poly-$\mathcal{C} $} with Hirsch length $n$.  

If $\mathcal{C} $ is a class of groups closed under commensurability (up to finite kernel), then we generalize the definition of $ \Quot_n( \mathcal{C}) $ there exists a chain as in \eqref{eq:QuotnC}
\begin{equation}
\Gamma_n\to \Gamma_{n-1}\to\cdots \to \Gamma_1\to 1
\end{equation}
 with $\Gamma $ commensurable (up to finite kernel) to $\Gamma_n $.  In this situation, we must take care as commensurabilty may introduce unexpected  variability. For instance, if we take  the family of all non-amenable free groups $\mathcal{F}$, naturally $\mathbb{F}_4\in\Quot_1(\mathcal{F}) $. The cannonical surjection $\mathbb{F}_4\to \mathbb{F}_2 $ demonstrates the fact $\mathbb{F}_4\in \Quot_2(\mathcal{F}) $. In general, $\mathbb{F}_{2n}\in \Quot_n(\mathcal{F}) $.  As all non-amenable free groups are commensurable, $\mathbb{F}_2\in \Quot_n(\mathcal{F}) $ for every $n $. Thus we impose the following minimality condition in the definition of $\Quot_n(\mathcal{C}) $: 
\begin{defn}\label{defn:QuotnCCommensurable}
Let $\mathcal{C} $ be a class of groups closed under commensurabilty (up to finite kernel).  $\Gamma\in\Quot(\mathcal{C}) $ if there exists a chain of surjections
\begin{align*}
\Gamma_n \overset{\pi_n}\rightarrow \Gamma_{n-1} \overset{\pi_{n-1}}\rightarrow\cdots \overset{\pi_2}\rightarrow\Gamma_1\overset{\pi_1}\rightarrow 1
\end{align*}
so that $\Gamma $ is commensurable to $\Gamma_n $, $\ker(\pi_k)\in \mathcal{C} $.  
$\Gamma\in \Quot_n(\mathcal{C}) $ if $n $ is the smallest positive integer such that $\Gamma $ is a $k $-step extension by $\mathcal{C} $. 

\end{defn}

\section{Relative Strong Solidity and Poly-Hyperbolic Groups}\label{sec:RSSGRoups}

A von Neumann algebra $M$ is said to be \emph{solid} if for any diffuse subalgebra $D\subset M$, the relative commutant $D'\cap M $ is amenable; $M$ is \emph{strongly solid} if the normalizer $\mathcal{N}_M(A) $ of any diffuse amenable subalgebra $A\subseteq M $ generates an amenable subalgebra of $M$.  When $M$ is non-amenable, strong solidity implies solidity, which in turn yields primeness.  

\begin{defn}[\cite{CKP14}]
Given a group $\Sigma $, $\mathcal{C}_{\text{rss}} (\Sigma)$ is the collection of all non-amenable, exact groups  containing $\Sigma $ as a proper malnormal subgroup so that the following holds:\\
Assume $\Gamma\curvearrowright N $ is any trace preserving action on a tracial von Neumann algebra $N$ and denote $M=N\rtimes \Gamma$. Let $p \in M$ be a projection and $P\subset pMp $ a subalgebra amenable relative to $N\rtimes \Sigma $.  Then either
\begin{enumerate}
\item $P\prec_{M} B\rtimes \Sigma $, or
\item $\mathcal{N}_{pMp}(P)'' $ is amenable relative to $N\rtimes \Sigma$. 
\end{enumerate}	 
\end{defn}
 Our work  concerns the case where $\Sigma=\set{e} $ and thus  for ease of notation we set $\mathcal{C} _{\text{rss}}:=\mathcal{C}_{\text{rss}}(\set{e})$.  The following is a summary of known facts of the class $\mathcal{C}_\text{rss} $  
\begin{enumerate}
\item \cite{PV11} Any weakly amenable group with positive first $\ell^2 $-Betti number is in $\mathcal{C}_{\text{rss}} $.
\item\label{ex:HyperbolicGroups} \cite{PV12} Any non-amenable, weakly amenable, biexact group is in $\mathcal{C}_\text{rss} $, e.g.~non-amenable hyperbolic groups.  
\item \cite{VV14} $\mathcal{C}_{\text{rss}} $ is closed under commensurabilty up to finite kernel.
\end{enumerate}
If $ \mathcal{F}$ denotes the collection of groups commensurable to the non-amenable free groups, $\mathcal{H} $ non-elementary hyperbolic groups and all non-amenable, non-trivial free product of exact groups, then we have the following elementary inclusions:
\begin{align*}
\mathcal{F}\subset \mathcal{H}\subset \mathcal{C}_\text{rss}.
\end{align*}
Hence all poly-hyperbolic groups no amenable factors in its composition series are each poly-$\mathcal{C}_\text{rss} $ groups.   
We note that In view of item (3) above, each collection of groups $\Quot_n(\mathcal{C}_\text{rss)} $ is closed under commensurability.  
\subsection{Mixed Surface Braid Groups}
Let $M $ be an orientable surface.  For every fixed positive integer $k$, $F_k(M)=\set{(x_1,\ldots, x_k)\in M^k : x_i\neq x_k\, \forall \, i\neq j} $ denotes the $k$-th configuration space of $M$.  The symmetric group $\mathcal{S}_k $ acts freely  on $F_k(M)$ by permutation of the coordinates.  The groups $PB_k(M):=\pi_1(F_k(M)) $ and $B_k(M):=\pi_1(F_k(M)/S_k) $ are the \emph{pure braid group of $k$ strands} and the \emph{braid group of $k$ strands} of the surface $M$, respectively.     When $M=\mathbb{D}^2 $. we recover the classical (pure/mixed)  braid groups, and in this case we simply denote  $B_k(\mathbb{D}^2),\, P_k(\mathbb{D}^2) $, or $ B_{k,j}(\mathbb{D}^2)$ by $B_k, P_k $ and $B_{j,k} $, respectively.  

Now, fix a pair of positive  integers  $k $ and $j$.  Regarded as a subgroup  of $S_{k+j} $, $S_k\times S_j $ has a natural free action on $F_{k+j}(M)$  by allowing $S_k $ to act on the first $k $ coordinates and $S_j $ on the remaining $j$ coordinates.  The group $B_{k,j}(M):=\pi_1(F_{k+j}(M)/ (S_k\times S_j)) $ is the \emph{mixed braid group} of $(k,j) $ strands.  Note that there is a canonical embedding of $B_{k,j}(M) $ into $B_{k,j}(M) $.  
Following as in \cite{CKP14}, $\widetilde{P}_k $, $\widetilde{B}_{k,j} $, and $\widetilde{B}_{j} $  will denote the central quotients of the pure, mixed, and standard braid groups.  

Fix $j$ distinct points $x_1,\ldots, x_j \in M $.  The map $F_{k+j}(M)/ (S_k\times S_j)  \to F_j(M) /S_j$ given by forgetting the first $k$ coordinates is a locally trivial fibration with fiber $F_k(M\setminus \set{x_1,\ldots ,x_j}$ called the \emph{Fadwell-Newarth fibration}.  The long exact sequence in homotopy of this fibration yields the following short exact sequence for the mixed braid groups:
\begin{align}\label{eq:MSBSequence}
1\rightarrow B_k(M\setminus\set{x_1,\ldots,x_j)})\to B_{k,j}(M)\rightarrow B_j(M) \to 1
\end{align}
provided that $M\neq S^2 $ or $\mathbb{R}P^2 $ \cite[Theorem 3, Corollary 2.2]{FN62}.  

Let $M =\Sigma_{0,b}$ be an orientable genus $g=0 $ surface with $b $ boundary components.  Letting $k,j\geq 3 $, Equation \eqref{eq:MSBSequence} becomes
\begin{align*}
1\rightarrow B_k(\Sigma_{0,b+j})\to B_{k,j}(M)\rightarrow B_j(M)\to 1,
\end{align*}
where $\Sigma_{0,b+j} $ is a surface with $b+j $ boundary components.  Passing to the central quotients, \cite[Theorem 3.5]{CKP14} implies that $\widetilde{B}_{k,n}(M) \in \Quot_{k+j-4}(\mathcal{F})$.

\subsection{Direct Product Decompositions for $\Quot(\mathcal{C}_\text{rss}) $}
Fixing a group $\Gamma $ in  $\Quot(\mathcal{C}_\text{rss}) $,  we begin by demonstrating that the existence of large commuting subgroups $\Sigma_1,\Sigma_2 $ inside the group $\Gamma $ implies the existence of a \emph{virtual} splitting of the group as a direct product. Essentially, we perturb the groups $\Sigma_i $ to commensurable  groups $\Lambda_i\leqslant\Gamma $ so that $\Lambda_1\times \Lambda_2 $ are $\Gamma $ are commensurable. This is achieved by examining the relationship between the analytic properties  and algebraic properties of groups in the class $\mathcal{C}\text{rss} $ and subsequently investigating how these properties translate to all groups in
  $\Quot(\mathcal{C}_\text{rss}) $.  

\begin{lemma}\label{lem:Quot1Prime}
Let  $p\in L(\Gamma) $ be a projection and $A, B \subset pL(\Gamma) p$ be two diffuse commuting subalgebras of $pL(\Gamma) p$ with $\Gamma\in \mathcal{C}_{\text{rss}} $.  Then $[pL(\Gamma) p : A\vee B]_{PP} =\infty$.  In particular if $ \Sigma_1, \Sigma_2<\Gamma$ are commuting subgroups such that $[\Gamma: \Sigma_1\Sigma_2]<\infty $, then either $\Sigma_1 $ or $\Sigma_2 $ is finite.  
\end{lemma}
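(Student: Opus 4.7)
The plan is to argue by contradiction: assume $[pL(\Gamma)p : A \vee B]_{PP}<\infty$ and conclude that $pL(\Gamma)p$ is amenable, contradicting the non-amenability of $\Gamma \in \mathcal{C}_\text{rss}$. The main lever is the defining dichotomy of $\mathcal{C}_\text{rss}$ applied in the simplest possible setting, namely to the trivial action $\Gamma \curvearrowright \mathbb{C}$ with $\Sigma=\{e\}$, which is nothing but a strong solidity statement for $L(\Gamma)$.

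Since $A$ is diffuse, first choose a diffuse abelian subalgebra $A_0 \subseteq A$. Because $A_0$ is amenable, the dichotomy for $\mathcal{C}_\text{rss}$ forces either $A_0 \prec_M \mathbb{C}$ or $\mathcal{N}_{pMp}(A_0)''$ to be amenable, where $M=L(\Gamma)$. The first alternative is excluded: diffuseness of $A_0$ produces a sequence of unitaries in $A_0$ tending weakly to $0$, violating Popa's criterion (Theorem \ref{corner}, part (2)) for intertwining into $\mathbb{C}$. Hence $\mathcal{N}_{pMp}(A_0)''$ is amenable. Next I observe that $A \vee B \subseteq \mathcal{N}_{pMp}(A_0)''$: every unitary in $A$ commutes with $A_0 \subseteq A$, and every unitary in $B$ commutes with $A \supseteq A_0$ by the commutativity hypothesis, so each such unitary in fact centralizes $A_0$ and therefore normalizes it. Since such unitaries generate $A \vee B$, this subalgebra is amenable. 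Because amenability passes up along inclusions of finite Pimsner-Popa index, the assumption $[pMp : A \vee B]_{PP}<\infty$ would then force $pMp$ to be amenable, hence $L(\Gamma)$ itself to be amenable, contradicting $\Gamma \in \mathcal{C}_\text{rss}$. This proves the first assertion.

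For the ``in particular'' statement, set $p=1$, $A=L(\Sigma_1)$, and $B=L(\Sigma_2)$. Commutativity of $\Sigma_1$ and $\Sigma_2$ makes $\Sigma_1 \Sigma_2$ a subgroup of $\Gamma$, and since $u_{\sigma_1}u_{\sigma_2}=u_{\sigma_1 \sigma_2}$ one has $A \vee B = L(\Sigma_1 \Sigma_2)$. The hypothesis $[\Gamma : \Sigma_1 \Sigma_2]<\infty$ then translates directly into $[L(\Gamma):L(\Sigma_1 \Sigma_2)]_{PP}<\infty$. If both $\Sigma_i$ were infinite, $L(\Sigma_1)$ and $L(\Sigma_2)$ would each contain diffuse subalgebras, and applying the first part to those diffuse subalgebras (together with the fact that finite index of the larger inclusion forces finite index of the sub-inclusion generated by the diffuse subalgebras) would yield a contradiction. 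Hence one of $\Sigma_1,\Sigma_2$ is finite.

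The main conceptual step to verify carefully is the inclusion $A \vee B \subseteq \mathcal{N}_{pMp}(A_0)''$, which uses that unitaries \emph{centralizing} $A_0$ trivially normalize it, so that the commutativity of $A$ and $B$ feeds directly into the output of the dichotomy. Everything else reduces to standard permanence properties: amenability under finite Pimsner-Popa index extensions, non-amenability of nonzero corners of non-amenable factors, and Popa's intertwining criterion applied to diffuse algebras.
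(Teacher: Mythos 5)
Your overall strategy---reduce to the strong solidity dichotomy for the trivial action $\Gamma \curvearrowright \mathbb{C}$ with $\Sigma = \{e\}$, show $A\vee B$ is amenable, and then use finite Pimsner--Popa index to propagate amenability to $pL(\Gamma)p$, contradicting non-amenability of $\Gamma$---is the right approach, and it matches the one the paper implicitly invokes (and the one visible in the commented-out early draft at the end of the source). The ``in particular'' reduction is also fine. However, the step you yourself flagged is exactly where the argument breaks.

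You assert that $A \vee B \subseteq \mathcal{N}_{pMp}(A_0)''$, on the grounds that ``every unitary in $A$ commutes with $A_0 \subseteq A$.'' This is false in general: $A_0$ is just some diffuse abelian subalgebra of $A$, not a central one, and unitaries of $A$ outside $A_0' \cap A$ need not commute with, or even normalize, $A_0$. (If $A$ is a factor, the only central subalgebra is $\mathbb{C}p$, which is not diffuse, so you cannot remedy this by choosing $A_0$ central.) What you \emph{do} legitimately get from the first application of the dichotomy is that $B \subseteq \mathcal{N}_{pMp}(A_0)''$, because $B$ commutes with all of $A$ and in particular with $A_0$; this shows $B$ is amenable. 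The fix is the standard two-step (``bootstrap'') application of the dichotomy: now that $B$ is amenable and diffuse, apply the $\mathcal{C}_\text{rss}$ dichotomy to $P = B$. Again $B \not\prec_M \mathbb{C}$ since $B$ is diffuse, so $\mathcal{N}_{pMp}(B)''$ is amenable. This time the inclusion $A \vee B \subseteq \mathcal{N}_{pMp}(B)''$ is genuine: $A$ commutes with $B$ (hence normalizes it), and $B$ trivially normalizes itself. From here your finite-index and non-amenability argument goes through. So the proposal is conceptually on target but needs that second pass through the dichotomy; the single-pass version you wrote does not give $A$ membership in the normalizer.

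One cosmetic point on the ``in particular'' part: you can apply the first assertion directly to $A = L(\Sigma_1)$ and $B = L(\Sigma_2)$ (both diffuse when $\Sigma_1,\Sigma_2$ are infinite) rather than passing to further diffuse subalgebras and invoking monotonicity of index, since $A \vee B = L(\Sigma_1\Sigma_2)$ already has finite Pimsner--Popa index in $L(\Gamma)$ by hypothesis.
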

We omit the proof, as it is the base step of the induction argument of the main theorem of \cite{CKP14}.

\begin{prop}\label{prop:ProductConverse}
$\Gamma\in \Quot_n({\mathcal{C}_\text{rss}}) $. Suppose there exist infinite groups such that $\Gamma$ is commensurable to $\Sigma_1\times \Sigma_2 $.  Then we may find $n_1,n_2>0 $ such that $n_1+n_2=n $ with $\Sigma_i\in \Quot_{n_i}(\mathcal{C}_\text{rss}) $. 
\end{prop}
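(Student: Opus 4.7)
The plan is to argue by induction on $n$. For the base case $n=1$, the second assertion of Lemma~\ref{lem:Quot1Prime} shows that no group in $\mathcal{C}_\text{rss}$ can be commensurable to a direct product of two infinite commuting subgroups, so the hypothesis fails and the statement holds vacuously. Since $\Quot_n(\mathcal{C}_\text{rss})$ is closed under commensurability by Proposition~\ref{prop:QuotProperties}(3), for the inductive step I may replace $\Gamma$ by a common finite index subgroup and assume outright that $\Gamma=\Sigma_1\times\Sigma_2$ with both factors infinite.

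Invoking the poly-$\mathcal{C}_\text{rss}$ characterization, I fix a normal subgroup $H\triangleleft\Gamma$ with $H\in\mathcal{C}_\text{rss}$ and $\Gamma/H\in\Quot_{n-1}(\mathcal{C}_\text{rss})$. Writing $\pi:\Gamma\to\Gamma/H$ and setting $H_i=H\cap\Sigma_i$ and $\bar\Sigma_i=\pi(\Sigma_i)$, the subgroups $H_1,H_2$ commute inside $H$, and an elementary diagram chase shows that $H/(H_1\times H_2)$ embeds into the kernel of the multiplication map $\bar\Sigma_1\times\bar\Sigma_2\to\Gamma/H$, which is in turn isomorphic to $\bar\Sigma_1\cap\bar\Sigma_2$. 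Since $\bar\Sigma_1$ and $\bar\Sigma_2$ commute and together generate $\Gamma/H$, any element of $\bar\Sigma_1\cap\bar\Sigma_2$ is central, so $\bar\Sigma_1\cap\bar\Sigma_2\subseteq Z(\Gamma/H)$.

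The technical core of the argument is to verify that $Z(\Gamma/H)$ is finite, so that $H_1\times H_2$ has finite index in $H$ and Lemma~\ref{lem:Quot1Prime} becomes applicable. I plan to establish in parallel, by a separate induction on the step number, that no group in $\Quot(\mathcal{C}_\text{rss})$ admits an infinite amenable normal subgroup. The base case $\Gamma\in\mathcal{C}_\text{rss}$ follows by applying the defining dichotomy (with trivial base $N=\mathbb{C}$) to $P=L(N_0)$ for a hypothetical infinite amenable normal $N_0\triangleleft\Gamma$: the first alternative $P\prec_{L(\Gamma)}\mathbb{C}$ is impossible because $L(N_0)$ is diffuse, while the normality of $N_0$ implies $\mathcal{N}_{L(\Gamma)}(L(N_0))''=L(\Gamma)$, so the second alternative forces $\Gamma$ to be amenable, contradicting $\Gamma\in\mathcal{C}_\text{rss}$. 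The inductive step is routine: an infinite amenable normal subgroup $N_0\triangleleft\Gamma$ either projects to one in $\Gamma/H\in\Quot_{n-1}(\mathcal{C}_\text{rss})$, or else $N_0\cap H$ is infinite amenable normal in $H\in\mathcal{C}_\text{rss}$.

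Once $\bar\Sigma_1\cap\bar\Sigma_2$ is finite, Lemma~\ref{lem:Quot1Prime} applied to $H\in\mathcal{C}_\text{rss}$ forces one of $H_1,H_2$ to be finite, say $H_1$. Then $\Sigma_1$ is commensurable to $\bar\Sigma_1$, while $H_2$ has finite index in $H$ and therefore lies in $\mathcal{C}_\text{rss}$ by commensurability closure. I then split into subcases on the sizes of the $\bar\Sigma_i$: the case where $\bar\Sigma_1$ is finite contradicts the infiniteness of $\Sigma_1$; if $\bar\Sigma_2$ is finite then $\bar\Sigma_1$ has finite index in $\Gamma/H$, giving $\Sigma_1\in\Quot_{n-1}(\mathcal{C}_\text{rss})$ and $\Sigma_2\in\Quot_1(\mathcal{C}_\text{rss})$; otherwise both $\bar\Sigma_i$ are infinite and the inductive hypothesis supplies $\bar\Sigma_i\in\Quot_{m_i}(\mathcal{C}_\text{rss})$ with $m_1+m_2=n-1$, after which Proposition~\ref{prop:QuotProperties}(1) upgrades $\Sigma_2$, which is an extension of $\bar\Sigma_2$ by $H_2\in\mathcal{C}_\text{rss}$, to $\Quot_{m_2+1}(\mathcal{C}_\text{rss})$. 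In every admissible case the resulting indices sum to $n$. The main obstacle is the auxiliary no-infinite-amenable-normal-subgroup claim, since it is what allows me to control $H/(H_1\times H_2)$ and thereby bring Lemma~\ref{lem:Quot1Prime} to bear inside $H$; the remainder is bookkeeping with extensions and commensurability closure.
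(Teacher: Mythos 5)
Your argument is correct but follows a route genuinely different from the paper's, and the comparison is instructive. The paper works with the \emph{top quotient}: it composes the chain maps to get $\rho_n:\Gamma_n\twoheadrightarrow\Gamma_1\in\mathcal{C}_\text{rss}$, observes that $\rho_n(\Sigma_1)$ and $\rho_n(\Sigma_2)$ are commuting subgroups that automatically generate all of $\Gamma_1$, so the second assertion of Lemma~\ref{lem:Quot1Prime} applies immediately and one image is finite; the corresponding $\Sigma_i$ then sits, up to finite index, inside $\Gamma_n^{(1)}\in\Quot_{n-1}(\mathcal{C}_\text{rss})$ and the induction goes through with the $\mathcal{C}_\text{rss}$-piece handled ``at the top.'' You instead work with the \emph{bottom normal subgroup} $H=\ker\pi_n\in\mathcal{C}_\text{rss}$ and intersect $\Sigma_i$ with $H$ to get $H_1,H_2$. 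This is a legitimate dual decomposition, but it creates an obstacle the paper never encounters: $H_1H_2$ need not be all of $H$, so before Lemma~\ref{lem:Quot1Prime} can be applied inside $H$ you must first control $[H:H_1H_2]$, which, as you correctly reduce it, amounts to showing $Z(\Gamma/H)$ is finite. Your auxiliary claim --- that no group in $\Quot(\mathcal{C}_\text{rss})$ admits an infinite amenable normal subgroup --- is true, and your proof of it is sound: the $\mathcal{C}_\text{rss}$ dichotomy with $N=\mathbb{C}$ kills the base case (a diffuse $L(N_0)$ cannot intertwine into $\mathbb{C}$, and normality forces $\mathcal{N}(L(N_0))''=L(\Gamma)$, contradicting non-amenability), and the inductive step via $\pi(N_0)$ versus $N_0\cap H$ is routine. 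So your approach trades the paper's immediate applicability of Lemma~\ref{lem:Quot1Prime} for a self-contained structural lemma about amenable normal subgroups; the paper's choice is more economical, but yours isolates a fact (finite-dimensional center for groups in $\Quot(\mathcal{C}_\text{rss})$) that the paper tacitly relies on in Section~6 and in the closing remarks. One small point to be explicit about: after replacing $\Gamma$ by a common finite index subgroup $G$ of $\Gamma_n$ and $\Sigma_1\times\Sigma_2$, you should pass further to $(G\cap\Sigma_1)\times(G\cap\Sigma_2)$ to recover a genuine direct product, and replace $H$ by $H\cap G$ (still in $\mathcal{C}_\text{rss}$ by commensurability closure, with quotient still in $\Quot_{n-1}$) --- both the paper and your write-up elide this, but it is worth recording.
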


\begin{proof}
 As $\Gamma\in \Quot_n(\mathcal{C}_{\text rss}) $, there exists a chain
\begin{align}\label{eq:PropChainQuotn}
\Gamma_n\overset{\pi_n}\twoheadrightarrow \Gamma_{n-1}\overset{{\pi_{n-1}}}\twoheadrightarrow \cdots \overset{\pi_2}\twoheadrightarrow \Gamma_1\overset{\pi_1}\twoheadrightarrow 1
\end{align}
with $\Gamma $ commensurable to $ \Gamma_n$.  As $\Quot_j(\mathcal{C}_{\text rss}) $ is closed under commensurability for all $ j$, after  passing to a finite index subgroup, we may assume $\Sigma_1\times \Sigma_2=\Gamma_n $. 

Let $\rho_n:\Gamma_n\to \Gamma_1 $ be given by the concatenation of the homomorphisms given  in \eqref{eq:PropChainQuotn}.  Note $\rho_n(\Sigma_1\times\set{e}),\rho_n(\set{e}\times \Sigma_2)$ are commuting groups generating $\Gamma_1 $.   Lemma \ref{lem:Quot1Prime} implies either $\rho_n(\Sigma_1\times\set{e}) $  or $\rho_n(\set{e}\times \Sigma_2) $ is finite.  Symmetry allows us to assume  $\ker(\pi_n|_{\Sigma_1})=\Sigma<\Sigma_1 $ is a finite index  normal subgroup of  $\Sigma_1$ with $\Sigma< \ker(\rho_n)=\Gamma_n^{(1)} $.  $\Sigma\times \Sigma_2 $ is a finite index subgroup of  $\Gamma_n $.  If we restrict $\rho_n $ to $\Sigma\times \Sigma_2$, a simple calculation yields $\Gamma_n^{(1)}=\Sigma\times \Lambda $ where  $\Lambda=\ker (\rho_n|_{\Sigma_2})$.  Thus we have
\begin{align}\label{eq:Sigma2Quot1}
\frac{\Sigma_2}{\Lambda}\cong \frac{\Sigma\times \Sigma_2}{\Sigma\times \Lambda}< \frac{\Gamma_n}{\Gamma_n^{(1)}}\in \mathcal{C}_{\text rss}
\end{align}
Since $\Gamma $ is commensurable to $\Sigma\times \Sigma_2 $, the inclusion in \eqref{eq:Sigma2Quot1} is a finite index inclusion of groups.  Thus $\Sigma_2/\Lambda\in \mathcal{C}_{\text rss} $. 

When  $n=2 $, we have $\Gamma_2^{(1)}\in\mathcal{C}_{\text rss} $  and $\Gamma_2^{(1)}=\Sigma\times \Lambda $   Lemma \ref{lem:Quot1Prime} implies $\Lambda $ is finite. Thus $ \Sigma$ is commensurable to $\Gamma_n^{(1)} $ yielding $\Sigma\in \Quot_1(\mathcal{C}_\text{rss}) $.  Since $\Lambda\triangleleft\Sigma_2 $ and $\Lambda $ is finite, passing to a finite index subgroup of both $\Lambda $ and $\Sigma_2 $, \eqref{eq:Sigma2Quot1} shows $\Sigma_2\in \mathcal{C}_{\text{rss}} $.  

Now if the result holds for all groups in $\Quot_{n-1}(\mathcal{C}_\text{rss}) $ up to some integer $n-1 $, we  have $\Gamma_n^{(1)}\in\Quot_{n-1}(\mathcal{C}_{\text rss}) $ with $\Gamma_n^{(1)}=\Sigma\times \Lambda $. If $\Lambda $ is a finite group, then repeating the argument when $n=2 $ proves the result for $n $.  If instead $\Lambda $ is an infinite group, then by the induction hypothesis, we may find $n_1,n_2>0 $ so that $ n_1+n_2=n-1$, $\Sigma\in \Quot_{n_1}(\mathcal{C}_\text{rss}) $, and $\Lambda\in \Quot_{n_2}(\mathcal{C}_\text{rss}) $. By \eqref{eq:Sigma2Quot1}, we then have $ \Sigma_2\in \Quot_{n_2+1}(\mathcal{C}_{\text rss})$. Furthermore, $\Sigma_1 $ is commensurable to $\Sigma$ which by Propositions \ref{prop:QuotProperties} ensures $\Sigma_1\in \Quot_{n_1}(\mathcal{C}_\text{rss}) $.
\end{proof}

\begin{cor}\label{cor:ProductConverseExact}
Let $\Gamma\in \Quot_n(\mathcal{C}_\text{rss}) $ and suppose $\Gamma $ is commensurable to $\Sigma_1\times \Sigma_2 $ with $\Sigma_1\in \Quot_j(\mathcal{C}) $ for some $ j\in \N$.  Then either $n=j $ and $\Sigma_2$ is finite, or $j<n $ and $\Sigma_2\in \Quot_{n-j}(\mathcal{C}_\text{rss}) $.
\end{cor}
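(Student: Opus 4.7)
The plan is to derive this corollary directly from Proposition \ref{prop:ProductConverse} by a dichotomy on whether $\Sigma_2$ is finite, with all of the real work having been done in that proposition and in the commensurability-closure property of $\Quot_n(\mathcal{C}_\text{rss})$ recorded in Proposition \ref{prop:QuotProperties}(3). The key technical point to keep track of is that $\Quot_n(\mathcal{C}_\text{rss})$ is defined via a minimality condition (Definition \ref{defn:QuotnCCommensurable}), so when we read off an index from Proposition \ref{prop:ProductConverse} we must argue it matches $j$ exactly, not just that it is some index.

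First I would handle the case that $\Sigma_2$ is finite. Then $\Gamma$ is commensurable to $\Sigma_1$, and by the commensurability-closure of $\Quot_n(\mathcal{C}_\text{rss})$ we have $\Sigma_1 \in \Quot_n(\mathcal{C}_\text{rss})$. Applying the minimality in Definition \ref{defn:QuotnCCommensurable} to $\Sigma_1$, which by hypothesis satisfies $\Sigma_1 \in \Quot_j(\mathcal{C}_\text{rss})$ in the minimal sense, gives $j \leq n$; conversely, since $\Gamma$ is commensurable to $\Sigma_1$ and $\Sigma_1$ is a $j$-step extension by $\mathcal{C}_\text{rss}$, the same minimality applied to $\Gamma$ forces $n \leq j$. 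Hence $n = j$.

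Next I would handle the case where $\Sigma_2$ is infinite. Then both factors of $\Sigma_1\times\Sigma_2$ are infinite and Proposition \ref{prop:ProductConverse} produces positive integers $n_1,n_2$ with $n_1+n_2=n$, $\Sigma_1\in\Quot_{n_1}(\mathcal{C}_\text{rss})$ and $\Sigma_2\in\Quot_{n_2}(\mathcal{C}_\text{rss})$. Invoking the minimality of $j$ exactly as in the finite case (now comparing the two expressions $\Sigma_1\in\Quot_j(\mathcal{C}_\text{rss})$ and $\Sigma_1\in\Quot_{n_1}(\mathcal{C}_\text{rss})$) forces $n_1=j$, so $n_2 = n-j > 0$ and $\Sigma_2\in\Quot_{n-j}(\mathcal{C}_\text{rss})$.

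There is essentially no obstacle beyond this bookkeeping: the substantive content — that any virtual product decomposition of a group in $\Quot_n(\mathcal{C}_\text{rss})$ must split the Hirsch-type length additively — is entirely contained in Proposition \ref{prop:ProductConverse}, and the corollary only promotes that statement by pinning down $n_1$ as the prescribed index $j$ via the minimality convention.
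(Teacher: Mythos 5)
Your proof is correct, and it is in fact cleaner than the paper's own argument. The paper re-opens the inductive machinery from Proposition~\ref{prop:ProductConverse}: it descends to $\Gamma_n^{(1)}$, distinguishes which of the two kernels $\Lambda_i=\ker(\rho_n|_{\Sigma_i})$ is nontrivial, and tracks the additivity of indices case by case, invoking minimality twice to rule out misaligned decompositions. You instead treat Proposition~\ref{prop:ProductConverse} as a black box and observe that the whole statement collapses to pure bookkeeping under the minimality convention of Definition~\ref{defn:QuotnCCommensurable}: in the $\Sigma_2$-finite case, $\Gamma$ is commensurable to $\Sigma_1$, so commensurability-closure (Proposition~\ref{prop:QuotProperties}(3)) plus uniqueness of the minimal index forces $j=n$; in the $\Sigma_2$-infinite case, the proposition produces $\Sigma_1\in\Quot_{n_1}(\mathcal{C}_\text{rss})$ with $n_1+n_2=n$, and again uniqueness of the minimal index (which is what the notation $\Quot_{n_1}$ means under Definition~\ref{defn:QuotnCCommensurable}) forces $n_1=j$, hence $\Sigma_2\in\Quot_{n-j}(\mathcal{C}_\text{rss})$. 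The one point worth making explicit is that the index $n_1$ output by Proposition~\ref{prop:ProductConverse} is genuinely the minimal one for $\Sigma_1$, which is built into the notation but can also be checked directly: if $\Sigma_1$ were minimally an $m_1$-step extension with $m_1<n_1$, then Proposition~\ref{prop:QuotProperties}(2) and (3) would exhibit $\Gamma$ as an $(m_1+n_2)$-step extension with $m_1+n_2<n$, contradicting minimality of $n$ for $\Gamma$. With that observation in hand, your argument is airtight and avoids re-deriving the chain analysis.
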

\begin{proof}
By the minimality constraint in Definition \ref{defn:QuotnCCommensurable}, we naturally have $ j\leq n$.  If  $j=n $ and $\Sigma_2 $ were infinite, Proposition \ref{prop:ProductConverse} yields  $\Sigma_i\in\Quot_{n_i}(\mathcal{C}) $ for some $n>n_i\geq 1 $, once again contradicting minimality.  

Now suppose $1\leq j<n $.  If $n=2 $, by Proposition \ref{prop:ProductConverse}, we have the result. We momentarily define  $\Quot_0(\mathcal{C}_\text{rss}) $ as the collection of all finite groups. Proceeding as in the proof of the previous proposition, we have either
\begin{enumerate}
\item $\Gamma_n^{(1)} $ is commensurable to  $\Sigma_1\times \Lambda_2 $
\item $\Gamma_n^{(1)} $ is commensurable to $\Lambda_1\times \Sigma_2 $,
\end{enumerate}
where $\Lambda_i=\ker(\rho_n|_{\Sigma_i}) $.  In case (1),  $\Sigma_1\in \Quot_a(\mathcal{C}_\text{rss}) $ and $\Lambda_2\in \Quot_b(\mathcal{C}_\text{rss}) $, for $a,b\geq 0 $ with $a+b=n-1 $. Hence $\Sigma_2\in \Quot_{b+1}(\mathcal{C}_\text{rss}) $ By minimality $a\geq j $.  If $a>j $, this would imply $\Gamma\in \Quot_{j+b+1}(\mathcal{C}_\text{rss}) $ where $j+b+1<n $ once again contradicting minimality.  
In case (2), a similar argument will guarantee $\Lambda\in \Quot_{j-1}(\mathcal{C}_\text{rss}) $ and thus $\Sigma\in \Quot_{n-j}(\mathcal{C}_\text{rss}) $. 
\end{proof}
Further analysis of the claims above yields the following stronger statement.  We will omit the proof as it follows almost identically 
\begin{cor}\label{cor:QuotnCommutingGroups}
Let $\Gamma\in \Quot_n(\mathcal{C}_\text{rss}) $. Suppose we have a chain witnessing $\Gamma\in \Quot_n(\mathcal{C})_\text{rss} $
\begin{align*}
\Gamma_n\overset{\pi_n}\twoheadrightarrow \Gamma_{n-1}\overset{{\pi_{n-1}}}\twoheadrightarrow \cdots \overset{\pi_2}\twoheadrightarrow \Gamma_1\overset{\pi_1}\twoheadrightarrow 1
\end{align*}
with $[\Gamma_n:\Sigma_1\Sigma_2]<\infty $ for some infinite commuting groups $\Sigma_1,\Sigma_2< \Gamma_n $.  Then there exists $n_1, n_2>0 $ so that $n_1+n_2=n $ so that $\Sigma_i\in \Quot_{n_i}(\mathcal{C}_\text{rss}) $. Furthermore, if a priori we have $\Sigma_1\in \Quot_{j} $, then $\Sigma_2\in \Quot_{n-j}(\mathcal{C}_\text{rss}) $.
\end{cor}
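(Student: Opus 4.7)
The plan is to imitate the inductive argument of Proposition \ref{prop:ProductConverse}, propagating the weaker hypothesis that $\Sigma_1\Sigma_2$ merely has finite index in $\Gamma_n$ (rather than equaling it as a direct product). Let $\rho_n:\Gamma_n\twoheadrightarrow\Gamma_1$ denote the composition of all the maps in the chain, and put $\Gamma_n^{(1)}:=\ker(\rho_n)\in\Quot_{n-1}(\mathcal{C}_\text{rss})$. The images $\rho_n(\Sigma_1),\rho_n(\Sigma_2)$ are commuting subgroups of $\Gamma_1\in\mathcal{C}_\text{rss}$ whose product has finite index in $\Gamma_1$, so Lemma \ref{lem:Quot1Prime} forces one of them to be finite; by symmetry, assume $\rho_n(\Sigma_1)$ is. Set $\Sigma_i^0:=\Sigma_i\cap\Gamma_n^{(1)}$. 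Then $\Sigma_1^0$ has finite index in $\Sigma_1$, while $\Sigma_2/\Sigma_2^0\cong\rho_n(\Sigma_2)$ is a finite-index subgroup of $\Gamma_1$, hence itself lies in $\mathcal{C}_\text{rss}$ by closure under commensurability.

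Next I would check that $\Sigma_1^0,\Sigma_2^0$ are commuting subgroups of $\Gamma_n^{(1)}$ with $[\Gamma_n^{(1)}:\Sigma_1^0\Sigma_2^0]<\infty$. Commutativity is inherited, and for the index one observes that $\Sigma_1^0\Sigma_2$ has finite index in $\Sigma_1\Sigma_2$ and hence in $\Gamma_n$, so $\Sigma_1^0\Sigma_2\cap\Gamma_n^{(1)}$ has finite index in $\Gamma_n^{(1)}$. The identity $\Sigma_1^0\Sigma_2\cap\Gamma_n^{(1)}=\Sigma_1^0\Sigma_2^0$ then follows directly: if $\sigma_1\sigma_2\in\Gamma_n^{(1)}$ with $\sigma_1\in\Sigma_1^0\subseteq\Gamma_n^{(1)}$, then $\sigma_2\in\Gamma_n^{(1)}\cap\Sigma_2=\Sigma_2^0$. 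This is the only step where the hypothesis of finite-index product (rather than outright direct product) needs care, and it is the point I expect to be the main technical obstacle; once it is in place, the rest is bookkeeping.

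The induction on $n$ now runs in parallel with Proposition \ref{prop:ProductConverse}. For the base case $n=2$, a second application of Lemma \ref{lem:Quot1Prime} inside $\Gamma_2^{(1)}\in\mathcal{C}_\text{rss}$ forces $\Sigma_2^0$ to be finite (since $\Sigma_1^0$ is infinite), so $\Sigma_1$ is commensurable to $\Sigma_1^0\in\mathcal{C}_\text{rss}$ and $\Sigma_2$ is a finite-by-$\mathcal{C}_\text{rss}$ extension, both lying in $\Quot_1(\mathcal{C}_\text{rss})$. For the inductive step, if $\Sigma_2^0$ is finite then $\Sigma_1^0$ has finite index in $\Gamma_n^{(1)}$, yielding $\Sigma_1\in\Quot_{n-1}(\mathcal{C}_\text{rss})$ and $\Sigma_2\in\Quot_1(\mathcal{C}_\text{rss})$; otherwise both $\Sigma_i^0$ are infinite, and the induction hypothesis applied to $\Gamma_n^{(1)}$ with its natural chain $\Gamma_n^{(1)}\twoheadrightarrow\Gamma_{n-1}^{(1)}\twoheadrightarrow\cdots\twoheadrightarrow 1$ produces $n_1,n_2>0$ with $n_1+n_2=n-1$ and $\Sigma_i^0\in\Quot_{n_i}(\mathcal{C}_\text{rss})$. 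Then $\Sigma_1\in\Quot_{n_1}(\mathcal{C}_\text{rss})$ by commensurability closure, while prepending $\Sigma_2\triangleright\Sigma_2^0$ (with quotient in $\mathcal{C}_\text{rss}$) to a poly-$\mathcal{C}_\text{rss}$ series for $\Sigma_2^0$ yields $\Sigma_2\in\Quot_{n_2+1}(\mathcal{C}_\text{rss})$, for a total of $n_1+n_2+1=n$. The refinement under the a priori assumption $\Sigma_1\in\Quot_j(\mathcal{C}_\text{rss})$ follows by invoking the inductive "furthermore" with $\Sigma_1^0\in\Quot_j(\mathcal{C}_\text{rss})$ to deduce $\Sigma_2^0\in\Quot_{n-1-j}(\mathcal{C}_\text{rss})$, so that $\Sigma_2\in\Quot_{n-j}(\mathcal{C}_\text{rss})$, with minimality enforced exactly as in Corollary \ref{cor:ProductConverseExact}.
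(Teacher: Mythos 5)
Your proof is correct and follows essentially the same inductive route the paper intends (the paper omits the proof, pointing to Proposition \ref{prop:ProductConverse} and Corollary \ref{cor:ProductConverseExact}). You correctly isolate the one genuinely new technical point — that the finite-index product hypothesis propagates to $\Sigma_1^0\Sigma_2^0 \leq \Gamma_n^{(1)}$ via $\Sigma_1^0\Sigma_2\cap\Gamma_n^{(1)}=\Sigma_1^0\Sigma_2^0$ — and the remaining bookkeeping (base case, inductive step, and the refined $j$-count using commensurability closure of $\mathcal{C}_\text{rss}$) matches the argument the paper's preceding proofs establish.
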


\section{Proof of the Main Theorem}
Through their analysis of the generalized comultiplication map described within \cite{CIK13}, Chifan, Kida, and the second authors work in \cite{CKP14}   establish primeness results for the algebras coming from a  large natural class of groups.  These groups include most mapping class groups, central quotients of pure braid groups, Torelli groups, and Johnson kernels of punctured low-genus surfaces. We further Chifan's, Kida's  and the second author's analysis of the algebras coming from groups in $\Quot_{n}(\mathcal{C}_\text{rss}) $ by introducing to their work the techniques found within \cite{CdSS15}.  This combined approach will allow us to detect a direct product within the underlying group  $\Gamma$ whenever the algebra is non-prime.

To establish the main results, we set out the following notation taken \cite{CIK13}.  A group homomorphism $\rho:\Gamma\to \Lambda $ lifts $\rho $ to a $*-$homormorphism of von Neumann  algebras $\Delta:L(\Gamma)\to L(\Gamma)\bar{\otimes} L(\Lambda) $ by extending the map $u_\gamma\mapsto u_\gamma\otimes v_{\rho(\gamma) } $ where $\set{u_\gamma}_{\gamma\in \Gamma}, \set{v_\lambda}_{\lambda\in \Lambda}$ are the canonical unitaries of $L(\Gamma)$ and $L(\Lambda) $, respectively. When $\rho:\Gamma\to \Gamma$ is the identity, this is precisely the compultiplication map along $\Gamma $. For a group  $\Gamma\in\Quot_n(\mathcal{C}_{\text{rss}})$, we will consider the group homomorphism $\rho_n:\Gamma_n\to \Gamma_1 $ as defined in the previous section.   
If $P_1,\ldots, P_k\subset M  $ are subalgebras of $M$, then $P_1\vee \cdots\vee P_k \subset M $ is defined to be the smallest von Neumann  algebra in $M$ containing $P_1\cup \cdots \cup P_k$.  For every $j\in \set{1,\ldots, k}, $, we denote $\hat{P_j}:= P_1\vee\cdots \vee P_{j-1}\vee P_{j+1}\vee \cdots \vee P_k $.

\begin{lemma}\label{lem:TrivialNormalizer}
Let $ M$ be a type II$_1$ factor with $P_1,\ldots, P_k\subset M $ diffuse commuting II$_1 $ factors such that $P_1\vee \cdots P_k \subset M$ is a finite index inclusion of algebras.  Then there exists a projection $z\in M $ so that $\mathcal{Z}(Q_iz) \cong \C$,  	where $\mathcal{N}_{M}(P_i)'' =Q_i$.  
\end{lemma}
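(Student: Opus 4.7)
The plan is to combine the finite-index hypothesis with Jones' rigidity (Theorem \ref{thrm:IndexofvnAlgebras}) to show that every $\mathcal{Z}(Q_i)$ lies inside a single finite-dimensional algebra, and then to extract $z$ as a minimal projection in the commutative join of these centers. First I would observe that since the $P_i$'s are commuting II$_1$ factors, $N := P_1 \vee \cdots \vee P_k$ is canonically isomorphic to the spatial tensor product $P_1 \bar{\otimes} \cdots \bar{\otimes} P_k$ and is therefore itself a II$_1$ factor. The hypothesis states $[M:N]_{PP}<\infty$, so Theorem \ref{thrm:IndexofvnAlgebras}(4) yields that $N' \cap M$ is finite-dimensional; this is the key rigidity input.

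Next I would locate each $\mathcal{Z}(Q_i)$ inside $N' \cap M$ and show that the collection $\{\mathcal{Z}(Q_i)\}_i$ pairwise commutes. For every $j \neq i$ the unitaries of $P_j$ commute with $P_i$ and hence normalize it, so $P_j \subset Q_i$; combined with $P_i \subset Q_i$ this gives $N \subset Q_i$, whence $\mathcal{Z}(Q_i) \subset Q_i' \cap M \subset N' \cap M$. Applying the same reasoning to $N' \cap M$ itself, its unitaries commute with $N \supset P_i$ and hence normalize $P_i$, so $N' \cap M \subset Q_i$. Therefore for $j \neq i$ one has $\mathcal{Z}(Q_j) \subset N' \cap M \subset Q_i$, and centrality of $\mathcal{Z}(Q_i)$ in $Q_i$ forces $[\mathcal{Z}(Q_i),\mathcal{Z}(Q_j)]=0$.

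Finally I would form $A := \bigvee_{i=1}^k \mathcal{Z}(Q_i)$, a finite-dimensional abelian subalgebra contained in every $Q_i$, and let $z$ be any minimal projection of $A$. For each fixed $i$, the minimal projections $\{z_{i,\ell}\}_\ell$ of the subalgebra $\mathcal{Z}(Q_i) \subset A$ sum to $1$, so minimality of $z$ in $A$ forces $z \leq z_i$ for a unique minimal central projection $z_i$ of $Q_i$. Since $z_i$ is minimal in $\mathcal{Z}(Q_i)$, the reduction $Q_i z_i$ is a II$_1$ factor and $z$ is a nonzero projection inside it, so the corner $z Q_i z = z(Q_i z_i) z$ is again a II$_1$ factor, giving $\mathcal{Z}(Q_i z) \cong \mathbb{C}$ as required. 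The main technical obstacle I anticipate is establishing the pairwise commutation of the $\mathcal{Z}(Q_i)$'s; this rests on the nontrivial observation that each $\hat{P_i}$—not merely $P_i$—lies inside $Q_i$ (since commuting subalgebras automatically normalize one another), after which the remaining argument reduces to finite-dimensional bookkeeping inside $N'\cap M$.
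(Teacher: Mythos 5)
Your proof is correct and follows essentially the same route as the paper: reduce to finite-dimensionality of $(P_1\vee\cdots\vee P_k)'\cap M$ via the Pimsner--Popa index, locate the centers $\mathcal{Z}(Q_i)$ inside this finite-dimensional commutant, and cut by a minimal projection. You supply two details the paper leaves implicit (the inclusion $N'\cap M\subset Q_i$ that gives pairwise commutation of the centers, and the extraction of $z$ as a minimal projection of $\bigvee_i\mathcal{Z}(Q_i)$ rather than as a product of separately chosen minimal $z_i$), while the paper additionally records the identity $Q_iz=\mathcal{N}_{zMz}(P_iz)''$ for later use, which is not needed for the statement as phrased.
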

\begin{proof}
Letting  $P=P_1\vee \cdots\vee P_k $,  Theorem \ref{thrm:IndexofvnAlgebras} part (d) implies $P'\cap M $ is finite dimensional.  Notice $P\subset Q_i $ for every $i=1,\ldots, k $ since $P_i $ and $P_i'\cap M $ are both subalgebras of $ Q_i$.  Thus $\mathcal{Z}(Q_i) $ is finite dimensional since  $\mathcal{Z}(Q_i)\subset Q_i'\cap M\subset P'\cap M $.  

Now for any projection  $ z\in \mathcal{Z}(Q_i)$, we claim $Q_iz=\mathcal{N}_{zMz}(P_iz)'' $\footnote{This holds in much greater generality: the projection $z $ can be taken to be  any projection in $Q_i'\cap M $}.  It suffices to show $\mathcal{N}_{M}(P_i)z=\mathcal{N}_{zMz}(P_iz) $.  This follows clearly form the following facts: given any unitary $u\in\mathcal{N}_{M}(P_i) $, $(uz)^*uz=z=uz(uz)^* $; if $v\in \mathcal{U}(zMz) $ is a normalizing unitary of $P_iz $, then $v+(1-z)\in \mathcal{U}(M) $ is a normalizing unitary of $P_i $.

Since the algebras $\mathcal{Z}(Q_i) $ pairwise commute, we may take any  minimal projections $z_i\in \mathcal{Z}(Q_i)$ so that $z_iz_j=z_jz_1\neq 0 $.  Then
\begin{align*}
Az\subset zMz=N
\end{align*}
is a finite index inclusion of algebras with $\mathcal{Z}(\mathcal{N}_N(P_iz)'')=\mathcal{Z}(Q_iz)= \C z $.  
\end{proof}

\begin{prop}\label{prop:Intertwine1Level}
Let $\Gamma\in \Quot_n(\mathcal{C}_{\text{rss}}) $ with $p\in L(\Gamma) $ a non-zero projection. Suppose there exist $A, B\subset pL(\Gamma) p$ diffuse commuting subalgebras   so that $[pL(\Gamma) p: A\vee B]_{\text PP}<\infty$.  Then either
\begin{enumerate}
\item $A\prec_{L(\Gamma) } L(\Gamma_n^{(1)})  $, or
\item $B\prec_{L(\Gamma) } L(\Gamma_n^{(1)} )$.
\end{enumerate}
\end{prop}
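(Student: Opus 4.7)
The plan is to apply the defining $\mathcal{C}_\text{rss}$-dichotomy twice---once to $B$ and once to $A$---by viewing $L(\Gamma) = L(\Gamma_n^{(1)}) \rtimes \Gamma_1$ as a crossed product with $\Gamma_1 := \Gamma_n/\Gamma_n^{(1)} \in \mathcal{C}_\text{rss}$. Throughout write $N = L(\Gamma_n^{(1)})$ and $M = L(\Gamma)$. The crucial structural feature of the hypothesis is that $A$ and $B$ commute, so every unitary in one lies in the normalizer of the other inside $pMp$, and in particular
\[
A \vee B \;\subseteq\; \mathcal{N}_{pMp}(A)'' \cap \mathcal{N}_{pMp}(B)''.
\]

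First I would apply the $\mathcal{C}_\text{rss}$-dichotomy for $\Gamma_1 \curvearrowright N$ to the subalgebra $B$. This produces either (i) $B \prec_M N$, which is conclusion (2) of the proposition, or (ii) $\mathcal{N}_{pMp}(B)''$ is amenable relative to $N$. In case (ii), the containment $A \subseteq \mathcal{N}_{pMp}(B)''$ forces $A$ to be amenable relative to $N$, and I would apply the dichotomy a second time---now to $A$---yielding either (i') $A \prec_M N$, which is conclusion (1), or (ii') $\mathcal{N}_{pMp}(A)''$ amenable relative to $N$. The remaining sub-case is (ii) together with (ii'): here $A \vee B \subseteq \mathcal{N}_{pMp}(A)''$ is amenable relative to $N$, and the finite-index hypothesis $[pMp : A \vee B]_{PP} < \infty$ combined with the Pimsner--Popa estimates recalled in Section 2 propagates relative amenability up to $pMp$ and, by spatiality and factoriality of $M$, to all of $M$. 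The resulting co-amenability of $N \subseteq M$ would force $\Gamma_1 = \Gamma_n/\Gamma_n^{(1)}$ to be amenable, contradicting $\Gamma_1 \in \mathcal{C}_\text{rss}$.

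The step I expect to be the principal obstacle is the very first application of the $\mathcal{C}_\text{rss}$-dichotomy: its statement in Section \ref{sec:RSSGRoups} requires the input subalgebra to be amenable relative to $N$, a condition not a priori known for $B$. I would handle this either by appealing to the stronger Popa--Vaes dichotomy \cite{PV11, PV12}, which for groups in $\mathcal{C}_\text{rss}$ dispenses with the relative amenability precondition, or via a bootstrap: start from a diffuse amenable (e.g.~abelian) subalgebra $B_0 \subseteq B$, which is trivially amenable relative to $N$; apply the dichotomy to $B_0$; use the commutation $[A, B_0] = 0$ to deduce $A \subseteq \mathcal{N}_{pMp}(B_0)''$ and thereby transfer relative amenability to $A$; and then iterate the argument above. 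A secondary technical point---the propagation of relative amenability across the finite-index inclusion $A \vee B \subseteq pMp$ and then into $M$---is handled by the Pimsner--Popa machinery and the factoriality of $M$.
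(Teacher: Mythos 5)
Your overall strategy---apply the $\mathcal{C}_\text{rss}$-dichotomy, use the commutation relation $[A,B]=0$ to transfer relative amenability between $A$ and $B$, and invoke the finite-index hypothesis to rule out the ``everything amenable relative to $N$'' case---is the right one, and your bootstrap via an amenable $B_0\subseteq B$ is close in spirit to the paper's choice of an amenable $P_0\subset A$ (the final upgrade from ``every amenable subalgebra intertwines'' to ``the whole algebra intertwines'' indeed passes through \cite{BO08}, exactly as you anticipate). However, the first sentence of your plan contains a genuine gap. You write $L(\Gamma) = L(\Gamma_n^{(1)}) \rtimes \Gamma_1$, but the extension $1\to\Gamma_n^{(1)}\to\Gamma_n\to\Gamma_1\to 1$ need not split, so $L(\Gamma)$ is in general only a \emph{twisted} crossed product of $L(\Gamma_n^{(1)})$ by $\Gamma_1$. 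The defining dichotomy for $\mathcal{C}_\text{rss}$ in Section~\ref{sec:RSSGRoups} is stated for an honest action $\Gamma\curvearrowright N$ with $M=N\rtimes\Gamma$, and your argument quietly assumes access to such a decomposition.

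The paper sidesteps this precisely by working with the (co)multiplication-type embedding $\Delta\colon L(\Gamma)\to L(\Gamma)\bar\otimes L(\Gamma_1)$, $u_\gamma\mapsto u_\gamma\otimes v_{\rho_n(\gamma)}$, introduced in \cite{CIK13}. The target algebra $L(\Gamma)\bar\otimes L(\Gamma_1)$ is a genuine (trivial-action) crossed product $L(\Gamma)\rtimes\Gamma_1$, so the $\mathcal{C}_\text{rss}$-dichotomy applies to $\Delta(P_0)$ and $\Delta(B)$ against the corner $M\bar\otimes 1$; the conclusion $\Delta(A)\prec M\bar\otimes 1$ (resp.\ $\Delta(B)\prec M\bar\otimes 1$) is then translated back to $A\prec L(\Gamma_n^{(1)})$ (resp.\ $B\prec L(\Gamma_n^{(1)})$) via \cite[Proposition~3.4]{CIK13}, and the ruled-out case passes through \cite[Proposition~2.4]{OP07} and \cite[Proposition~3.5]{CIK13} to contradict non-amenability of $\Gamma_1$. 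You should replace the crossed-product framing with this $\Delta$-embedding (or explicitly justify that the dichotomy you are citing is valid for cocycle crossed products); as written, the very first step is not available.
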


\begin{proof}
First note $n\geq 2 $ by Lemma \ref{lem:Quot1Prime}.  Let $ P_0\subset A$ be an amenable subalgebra. Letting $M=L(\Gamma) $, we see $\Delta(P_0) $ and $\Delta(B) $ are diffuse commuting subalgebras of $M\bar\otimes L(\Gamma_1) $ with $\Delta(P_0) $ amenable.   Since $\Gamma_1\in \Quot_1(\mathcal{C}_{\text{rss}})=\mathcal{C}_{\text{rss}} $, we have either
\begin{enumerate}
\item $\Delta(P_0)\prec M\bar\otimes 1 $, or
\item $\mathcal{N}_{M\bar{\otimes} L(\Gamma_1)}(\Delta(P_0))''  $ is amenable relative to $M\bar\otimes 1 $ in $M\bar{\otimes} L(\Gamma_1) $.
\end{enumerate}
Assume (2) holds. Noting $\Delta(B)\subset \mathcal{N}_{M\bar{\otimes}L(\Gamma_1)}(\Delta(P_0))'' $  yields  $\Delta(B) $ is amenable relative to $M\bar\otimes 1 $ inside $M\bar\otimes L(\Gamma_1) $.  Applying the dichotomy property of $\mathcal{C}_{\text{rss}} $ either
\begin{enumerate}[resume]
\item $\Delta(B)\prec_{M\bar \otimes L(\Gamma_1)} M\bar\otimes 1$, or
\item $\mathcal{N}_{M\bar{\otimes} L(\Gamma_1)}(\Delta(B))'' $ is amenable relative to $M\bar\otimes 1 $ in $M\bar\otimes L(\Gamma_1) $.  
\end{enumerate}
To summarize we have either 
\begin{enumerate}[resume]
\item $\Delta(P_0)\prec_{M\bar{\otimes} L(\Gamma_1)} M\bar\otimes 1 $,
\item $\Delta (B) \prec_{M\bar{\otimes} L(\Gamma_1)} M\bar\otimes 1$, or
\item $\Delta(A\vee B)\subset \mathcal{N}_{M\bar{\otimes} L(\Gamma_1)}(\Delta(B))'' $ is amenable relative to $M\bar\otimes 1$ in  $M\bar\otimes L(\Gamma_1)$.
\end{enumerate}
We first show case (7) is impossible.  Since $A\vee B $ is  a finite index subalgebra of $M $, $M\prec^s A\vee B $ and hence $ M$ is amenable relative to $A\vee B $.  By \cite[Proposition 2.4]{OP07}, $\Delta(M) $ is amenable relative to $M\bar\otimes 1 $ in $M\bar\otimes L( \Gamma_1) $.  However, \cite[Proposition 3.5]{CIK13} would imply $\rho(\Gamma)=\Gamma_1 $ is amenable, a contradiction.  Thus we only have case (5) and (6).  Since $P_0 $ was an arbitrary amenable subalgebra of $A$, by \cite{BO08} we have either
\begin{enumerate}[resume]
\item $\Delta(A)\prec_{M\bar{\otimes} L(\Gamma_1)} M\bar\otimes 1 $, or
\item $\Delta (B) \prec_{M\bar{\otimes} L(\Gamma_1)} M\bar\otimes 1$.
\end{enumerate}
Noting $\ker(\rho_n)=L(\Gamma_n^{(1)}) $, the result follows from application of  \cite[Proposition 3.4]{CIK13}.  
\end{proof}

\begin{lemma}\label{lem:NonAmenableAlgebras}
Let $\Gamma\in \Quot_n(\mathcal{C}_{\text{rss}})$.  If $A,B\subset pL(\Gamma) p $ are diffuse commuting subalgebras  with $A $ amenable, then $[pL(\Gamma) p : A\vee B]_{PP}=\infty $.
\end{lemma}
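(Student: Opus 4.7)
The plan is to derive a contradiction by showing that both $A$ and $B$ must strongly intertwine into $L(\Gamma_n^{(1)})$; then Proposition \ref{prop:StrongIntertwiningMultiple} will produce $A\vee B\prec_M L(\Gamma_n^{(1)})$, which combined with the finite Pimsner-Popa index lifts to $M\prec_M L(\Gamma_n^{(1)})$, contradicting the infinite index $[\Gamma:\Gamma_n^{(1)}]=|\Gamma_1|=\infty$. I argue by induction on $n$. The base case $n=1$ is immediate from Lemma \ref{lem:Quot1Prime}, which yields the stronger statement that no amenability hypothesis on $A$ is required. For the inductive step $n\geq 2$, set $M=L(\Gamma)$ and $N=L(\Gamma_n^{(1)})$, note $N\triangleleft M$ is regular, and assume for contradiction $[pMp:A\vee B]_{PP}<\infty$.

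Proposition \ref{prop:Intertwine1Level} gives either $A\prec_M N$ or $B\prec_M N$, and normality of $\Gamma_n^{(1)}$ in $\Gamma$ upgrades this to strong intertwining. I treat the case $B\prec_M^s N$ in detail; the other case $A\prec_M^s N$ is handled by a parallel argument invoking the inductive hypothesis, as noted below. Standard Popa intertwining then produces a nonzero partial isometry $v\in M$, a projection $q\in N$, and a unital $*$-homomorphism $\theta:B\to qNq$ with $\theta(b)v=vb$. Setting $e=vv^*$, conjugation by $v$ yields a $*$-isomorphism $pMp\cong eMe$ that carries $A,B,A\vee B$ to $\tilde A,\tilde B,\tilde A\vee\tilde B$ with $\tilde A$ amenable diffuse, $\tilde B\subset eNe$ diffuse, $\tilde A$ and $\tilde B$ commuting in $eMe$, and $[eMe:\tilde A\vee\tilde B]_{PP}<\infty$.

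The crux is to show $\tilde A\prec_M N$. For this I run the dichotomy property of $\Gamma_1\in\mathcal{C}_{\text{rss}}$ via the comultiplication $\Delta:M\to M\bar\otimes L(\Gamma_1)$. As $\tilde A$ is amenable, $\Delta(\tilde A)$ is amenable relative to $M\bar\otimes 1$, so either $\Delta(\tilde A)\prec_{M\bar\otimes L(\Gamma_1)} M\bar\otimes 1$, in which case \cite[Proposition 3.4]{CIK13} yields $\tilde A\prec_M N$, or $\mathcal{N}(\Delta(\tilde A))''$ is amenable relative to $M\bar\otimes 1$. In the latter case, since $\tilde B$ commutes with $\tilde A$ we have $\Delta(\tilde A\vee\tilde B)\subset\mathcal{N}(\Delta(\tilde A))''$, and hence $\Delta(\tilde A\vee\tilde B)$ is amenable relative to $M\bar\otimes 1$. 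The finite Pimsner-Popa index of $\tilde A\vee\tilde B$ in $eMe$ propagates this (via \cite[Proposition 2.4]{OP07}) to $\Delta(eMe)$, and a standard corner-to-factor argument on the II$_1$ factor $M$ extends the relative amenability to $\Delta(M)$. Then \cite[Proposition 3.5]{CIK13} forces $\Gamma_1$ to be amenable, contradicting $\Gamma_1\in\mathcal{C}_{\text{rss}}$.

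Hence $\tilde A\prec_M N$, and transport through $v$ yields $A\prec_M^s N$. With both $A,B\prec_M^s L(\Gamma_n^{(1)})$ and $\Gamma_n^{(1)}$ normal in $\Gamma$, Proposition \ref{prop:StrongIntertwiningMultiple} applied with trivial base algebra and $\Sigma_1=\Sigma_2=\Gamma_n^{(1)}$ delivers $A\vee B\prec_M L(\Gamma_n^{(1)})$, producing the promised contradiction. The main obstacle I expect lies in the symmetric case where Proposition \ref{prop:Intertwine1Level} only returns $A\prec_M^s N$: the dichotomy applied to $\Delta(\tilde A)$ then becomes trivially satisfied, since $\tilde A\subset eNe$ already forces $\Delta(\tilde A)\subset M\bar\otimes 1$, so fresh information must be extracted from $\tilde B$. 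Handling this case requires invoking the inductive hypothesis inside the corner $eNe$ of $L(\Gamma_n^{(1)})\in\Quot_{n-1}(\mathcal{C}_{\text{rss}})$, applied to $\tilde A$ together with a diffuse partner of $\tilde A$ commuting inside $eNe$, extracted from $\tilde B$ via the conditional expectation $E_{eNe}$ and the finite-index structure of $\tilde A\vee\tilde B\subset eMe$; propagating the finite-index information downstairs is the most delicate step.
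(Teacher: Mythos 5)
Your overall intuition — induct on $n$, pass through the comultiplication $\Delta$, and exploit amenability of $A$ against the dichotomy for $\Gamma_1\in\mathcal{C}_{\text{rss}}$ — is right, and your treatment of the normalizer branch (case (2) of the dichotomy) matches the paper's argument almost verbatim. But there are two genuine gaps in the branch you frame as the difficulty, and a third that undermines the machinery you were hoping to fall back on.

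First, the step ``normality of $\Gamma_n^{(1)}$ in $\Gamma$ upgrades $\prec$ to $\prec^s$'' is not valid. The paper's own upgrade to strong intertwining (in Lemma \ref{lem:MaxNumberCommutingAlg}) invokes \cite[Lemma 2.5, Proposition 2.6]{Va10} together with \emph{factoriality} of the intertwining algebras. Here $A$ and $B$ are only assumed to be diffuse commuting subalgebras, not factors, so there is no control on $A'\cap pMp$ or $B'\cap pMp$ that would force the intertwining to survive cutting by arbitrary central pieces. Consequently you cannot legitimately feed $A$ and $B$ into Proposition \ref{prop:StrongIntertwiningMultiple}, which requires strong intertwining. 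This route to the final contradiction is therefore blocked.

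Second, the case you call the ``symmetric'' obstacle, $A\prec_M L(\Gamma_n^{(1)})$, is exactly the case the paper handles head-on, and it is the heart of the lemma. The clean way in is not to extract a commuting partner for $\tilde A$ inside $eNe$ from $E_{eNe}(\tilde B)$ --- the conditional expectation destroys the multiplicative structure of $\tilde B$ and gives you no control over $E_{eNe}(\tilde B)$ being diffuse, commuting, or of finite index. Instead, use \cite[Proposition 2.4]{CKP14}, which directly produces a $*$-isomorphism $\psi:p_1Ap_1\to P_1\subset qL(\Gamma_n^{(1)})q$ with $P_1\vee(P_1'\cap qL(\Gamma_n^{(1)})q)\subset qL(\Gamma_n^{(1)})q$ of finite Pimsner--Popa index. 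Then $P_1$ is diffuse amenable and $P_1'\cap qL(\Gamma_n^{(1)})q$ is the commuting partner, handed to you for free. The dichotomy on this relative commutant (atomic corner versus diffuse) yields the contradiction: an atomic corner gives a finite-index amenable corner of $L(\Gamma_n^{(1)})$, forcing $\Gamma_n^{(1)}$ amenable, while a diffuse corner plugs directly into the inductive hypothesis for $\Gamma_n^{(1)}\in\Quot_{n-1}(\mathcal{C}_{\text{rss}})$. In particular, case $A\prec N$ does not need salvaging into a ``both $A$ and $B$ intertwine, now combine'' argument --- it is already impossible.

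Finally, a smaller point: Popa's intertwining produces a partial isometry $v$ with $\theta(x)v=vx$, but this does not conjugate $pMp$ onto $eMe$ as a $*$-isomorphism (the partial isometry need not have full support in $p$), so the passage to $(\tilde A,\tilde B)$ inside $eMe$ is not as clean as your write-up suggests. The paper avoids this by never attempting to conjugate $B$; it only needs the identification of a corner of $A$ inside $L(\Gamma_n^{(1)})$ provided by \cite[Proposition 2.4]{CKP14}. Keeping both branches of the dichotomy on $\Delta(A)$, as the paper does, and letting \cite[Proposition 2.4]{CKP14} do the heavy lifting in branch (1), gives a substantially shorter and gap-free proof than the strong-intertwining route you attempted.
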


\begin{proof}
We proceed by induction on $ n$.  When $n=1 $, this follows from Lemma \ref{lem:Quot1Prime}.  Now assume the statement holds for all groups in $\Quot_k(\mathcal{C}_{\text rss}) $ where $k\leq n-1 $ and take $\Gamma\in \Quot_n(\mathcal{C}_{\text rss}) $ with $A,B\subset pL(\Gamma) p $ as stated.  Then  $\Delta(A) $ is amenable and therefore amenable relative to $M\bar \otimes 1 $ in $M\bar\otimes L(\Gamma_1)$.  Since $\Gamma_1\in\mathcal{C}_{\text{rss}} $, either 
\begin{enumerate}
\item $\Delta(A)\prec_{M\bar \otimes L(\Gamma_1)} M\bar\otimes 1 $, or 
\item $\mathcal{N}_{M\otimes L(\Gamma_1)}({\Delta(A)})''$ is amenable relative to $ M\bar \otimes 1 $ in $M\bar\otimes L(\Gamma_1) $.  
\end{enumerate}
We first show (1) is impossible.  If (1) were to hold, \cite[Proposition 3.4]{CIK13} implies $A\prec L(\Gamma_n^{(1)} )$.  By \cite[Proposition 2.4]{CKP14}, there exists a $* $-isomorphism $\psi:p_1Ap_1\to P_1\subset qL(\Gamma_n^{(1)}) q $ such that $P_1 \vee ({P_1}'\cap qL(\Gamma_n^{(1)})q)\subset qL(\Gamma_n^{(1)})q $ is a finite index inclusion of algebras. Since $A $ is a diffuse amenable algebra, $P_1 $ is also diffuse amenable.  Since $\Gamma_n^{(1)} $ is non-amenable, $P_1'\cap qL(\Gamma_n^{(1)} )q$ is non-amenable. Supposing $P_1'\cap qL(\Gamma_n^{(1)}) q $ has an atomic corner, cutting by a minimal central projection $z $,  $P_1z \subset qzL(\Gamma_n^{(1)}) qz$ is a finite index inclusion of algebra.  Since $P_1z $ is amenable is an amenable corner of $L(\Gamma_n^{(1)}) $, this is would imply $\Gamma_n^{(1)} $ is an amenable group, a contradiction. If instead $P_1'\cap qL(\Gamma_n^{(1)})q $ were diffuse, this would contradict the induction hypothesis.  
 
Now if (2) holds, the assumption $[M:A\vee B]_{PP}<\infty $ implies $\Delta(A\vee B)\subset M\bar\otimes L(\Gamma_1) $ is also a finite index inclusion of algebras.  Since
$\Delta(A\vee B)\subset \mathcal{N}_{M\bar \otimes  L(\Gamma_1)}{(\Delta(A))}'' $, $\Delta(A\vee B) $ is amenable relative to $M\bar\otimes 
1 $ in $M\bar\otimes L(\Gamma_1)$.  By \cite[Proposition 2.4]{OP07}, $M\bar\otimes L(\Gamma_1 )$ is amenable relative to $M\bar\otimes 1 $.
 However, this is impossible as \cite[Proposition 3.5]{CIK13} would imply $\Gamma_1\in\mathcal{C}_{\text rss} $ is amenable. 
\end{proof}

To establish the main result, we show that the maximal number of commuting diffuse subalgebras is controlled by the Hirsh length. Note that we have an upper bound rather than equality. As an example of when we have a strict upper bound, central quotients of braid groups are poly-free groups which give rise to prime von Neumann algebras \cite[Theorem A]{CKP14}.  

\begin{lemma}\label{lem:MaxNumberCommutingAlg}
Let $\Gamma\in \Quot_n(\mathcal{C}_{\text{rss}}) $ and suppose $P_1,\ldots , P_k \subset qL(\Gamma) q$  are diffuse commuting II$_1$ factors.  If $P_1\vee \cdots \vee P_k\subset  qL(\Gamma) q$ generate a finite index subalgebra, then $ k\leq n$. 
\end{lemma}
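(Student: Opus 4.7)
The proof plan is to induct on the Hirsch length $n$, using Proposition \ref{prop:Intertwine1Level} to push one of the $P_i$'s down into $L(\Gamma_n^{(1)})$ and then invoke the fact that $\Gamma_n^{(1)} \in \Quot_{n-1}(\mathcal{C}_{\text{rss}})$ to apply the inductive hypothesis. The base case $n=1$ is immediate: if $k \geq 2$, then $P_1$ and $P_2 \vee \cdots \vee P_k$ are commuting diffuse subalgebras of $qL(\Gamma)q$ whose join is finite-index, contradicting Lemma \ref{lem:Quot1Prime}.

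For the inductive step, I assume the statement holds in $\Quot_{n-1}(\mathcal{C}_{\text{rss}})$, fix $\Gamma\in \Quot_n(\mathcal{C}_{\text{rss}})$, and suppose for contradiction that $k \geq n+1$. The preliminary reduction uses Lemma \ref{lem:NonAmenableAlgebras} to rule out any $P_i$ being amenable: if some $P_i$ were amenable, then the commuting pair $(P_i,\hat P_i)$ would violate the finite-index hypothesis. Hence each $P_i$ is a non-amenable II$_1$ factor.

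Next, apply Proposition \ref{prop:Intertwine1Level} to the commuting diffuse pair $(P_1,\hat P_1)$ whose join has finite index in $qL(\Gamma)q$. After relabeling, we may assume $P_1 \prec_{L(\Gamma)} L(\Gamma_n^{(1)})$. Using \cite[Proposition 2.4]{CKP14}, this intertwining upgrades to the existence of projections $p_1 \in P_1$, $q' \in L(\Gamma_n^{(1)})$, and a $*$-isomorphism $\psi: p_1 P_1 p_1 \to \tilde P_1 \subset q' L(\Gamma_n^{(1)}) q'$ such that $\tilde P_1 \vee \bigl(\tilde P_1' \cap q' L(\Gamma_n^{(1)}) q'\bigr)$ has finite index in $q' L(\Gamma_n^{(1)}) q'$. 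The isomorphism $\psi$ is implemented by a partial isometry $v\in L(\Gamma)$ which, together with the fact that $P_2, \ldots, P_k \subset P_1' \cap qL(\Gamma)q$ commute with $P_1$, allows us (via the commuting-algebra machinery developed in \cite{CdSS15}) to transport the remaining factors cornerwise into the relative commutant of $\tilde P_1$ in $q' L(\Gamma_n^{(1)}) q'$. After cutting by an appropriate projection that factors out $\tilde P_1$, this yields $k-1$ commuting diffuse II$_1$ subfactors of a corner of $L(\Gamma_n^{(1)})$ whose join is finite-index there.

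Since $\Gamma_n^{(1)} \in \Quot_{n-1}(\mathcal{C}_{\text{rss}})$ by Proposition \ref{prop:QuotProperties}(4), the induction hypothesis forces $k-1 \leq n-1$, contradicting $k \geq n+1$. I expect the main obstacle to lie precisely in the transfer step of the previous paragraph: namely, verifying that conjugation by the intertwining partial isometry $v$ places $P_2,\ldots,P_k$ (or suitable corners thereof) inside $q' L(\Gamma_n^{(1)}) q'$ and preserves the finite-index generation property. This requires delicate bookkeeping between the partial isometry $v\in L(\Gamma)$, the relative commutant $\tilde P_1' \cap q'L(\Gamma_n^{(1)})q'$, and the finite-index relations; the tools from \cite{CdSS15} together with Proposition \ref{prop:StrongIntertwiningMultiple} should exactly supply this transfer, but care is needed to track how corner projections interact with the inductive structure.
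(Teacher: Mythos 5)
Your base case and the overall inductive skeleton match the paper's proof, and you correctly identify that Proposition~\ref{prop:Intertwine1Level} combined with the inductive hypothesis on $\Gamma_n^{(1)}\in\Quot_{n-1}(\mathcal{C}_\text{rss})$ (Proposition~\ref{prop:QuotProperties}(4)) is the right engine. You also correctly flag your own uncertainty at the end --- and indeed that uncertainty points at a genuine gap. The issue is exactly where you anticipate: after obtaining $\psi:p_1P_1p_1\to\tilde P_1\subset q'L(\Gamma_n^{(1)})q'$ with intertwiner $v$, conjugation by $v$ places the algebras $P_2,\ldots,P_k$ into $\tilde P_1'\cap q'L(\Gamma)q'$, \emph{not} into $\tilde P_1'\cap q'L(\Gamma_n^{(1)})q'$. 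The relative commutant inside $L(\Gamma_n^{(1)})$ is a priori much smaller than the one inside the ambient $L(\Gamma)$, and \cite[Proposition~2.4]{CKP14} only controls the former in the sense that $\tilde P_1\vee(\tilde P_1'\cap q'L(\Gamma_n^{(1)})q')$ has finite index. There is no reason for the conjugates $vP_iv^*$ to land in $L(\Gamma_n^{(1)})$ at all; the ``commuting-algebra machinery'' of \cite{CdSS15} does not supply such a localization when you intertwine only a single factor.

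The paper sidesteps this entirely by intertwining the \emph{join} $\hat{P_j}$ rather than a single factor. Since Proposition~\ref{prop:Intertwine1Level} gives, for each $j$, the dichotomy ``$\hat P_j\prec L(\Gamma_n^{(1)})$ or $P_j\prec L(\Gamma_n^{(1)})$,'' the two exhaustive cases are: (a) some $\hat P_j\prec L(\Gamma_n^{(1)})$, or (b) $P_j\prec L(\Gamma_n^{(1)})$ for every $j$. In case (a), applying \cite[Proposition~2.4]{CKP14} to $\hat P_j$ yields a $*$-isomorphism $\psi:p\hat P_jp\to A\subset rL(\Gamma_n^{(1)})r$; choosing $p=p_1\cdots p_k$ with $p_i\in P_i$ makes $\psi$ carry each $p_iP_ip$ ($i\neq j$) to a commuting diffuse nonamenable subfactor of $rL(\Gamma_n^{(1)})r$ simultaneously --- the $*$-isomorphism transports the entire tensor structure at once, with no need to localize anything afterward. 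This hands the induction $k-1=n$ commuting factors inside a corner of $L(\Gamma_n^{(1)})$, violating the bound $n-1$. In case (b), the paper upgrades to strong intertwining $P_j\prec^s L(\Gamma_n^{(1)})$ via factoriality and \cite[Lemma~2.5, Proposition~2.6]{Va10}, then applies Proposition~\ref{prop:StrongIntertwiningMultiple} to get $L(\Gamma)\prec L(\Gamma_n^{(1)})$ and hence $[\Gamma:\Gamma_n^{(1)}]<\infty$, contradicting $\Gamma/\Gamma_n^{(1)}\in\mathcal{C}_\text{rss}$. Your proposal collapses both cases into ``push $P_1$ down,'' which throws away the information needed in either branch; to repair it you should replace the single-factor intertwining with the join intertwining and add the case (b) argument via Proposition~\ref{prop:StrongIntertwiningMultiple}.
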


\begin{proof}
  When $n=1 $, Lemma \ref{lem:Quot1Prime} proves the assertion. Now suppose the conclusion  holds for all groups in $\Quot_{m}(\mathcal{C}_{\text rss})$ up to $m= n-1$.  Now let $\Gamma\in \Quot_{n}(\mathcal{C}_{\text{rss}}) $ and assume to the contrary there are $k>n $ diffuse subalgebras $ P_1, \ldots, P_k\subset  qL(\Gamma) q$ generating a finite index subalgebra of $ qL(\Gamma) q$.  Without loss of generality, we may assume $k=n+1 $.  Then by Proposition \ref{prop:Intertwine1Level},  for every $j\in \set{1,\ldots, k} $, either 
$\hat{P_j}\prec_{L(\Gamma)} L(\Gamma_n^{(1)} )$ or $P_{j}\prec_{L(\Gamma)} L(\Gamma_n^{(1)} )$.

Now if $\hat{P_j}\prec L(\Gamma_n^{(1)}) $, by \cite[Proposition 2.4]{CKP14} there exists a $ *$-isomorphism $\psi:  p\hat{P_j}p\to A\subset  rL(\Gamma_n^{(1)})r$ so that $A\vee A'\cap rL(\Gamma_n^{(1)})r\subset rL(\Gamma_n^{(1)})r $ is a finite index inclusion of algebras. We  may assume $p=p_1\cdots p_k $, $p_i\in P_i $ for $i\neq j $.  Hence  $\psi(p \hat{P_j}p)= \psi(\bigvee_{i\neq j}  p_iP_i p) =\bigvee_{i\neq j}\psi(p_iP_i p) $.  Thus
\begin{align*}
\bigvee_{i\neq j}\psi (p_iP_i p) \vee (\psi( p \hat{P_j} p) ' \cap rL(\Gamma_n^{(1)})r )\subset rL(\Gamma_n^{(1)})r
\end{align*}
is a finite index inclusion of algebras. By Lemma \ref{lem:NonAmenableAlgebras}, the center $\mathcal{Z}(\bigvee_{i\neq j}\psi (p_iP_i p) \vee (\psi( p \hat{P_j} p) ' \cap rL(\Gamma_n^{(1)})r )) $ cannot be diffuse.  Thus, cutting by a minimal central projection we may assume 
\begin{align*}
\bigvee_{i\neq j}\psi (p_iP_i p) \vee (\psi( p \hat{P_j} p) ' \cap rL(\Gamma_n^{(1)})r )\subset rL(\Gamma_n^{(1)})r
\end{align*}
is a finite index inclusion of factors.  
 However, this would contradict the induction hypothesis as it would allow for at least $n $ commuting diffuse non-amenable subalgebras of $rL(\Gamma_n^{(1)}) r$. 
 
  If instead $P_j\prec_{L(\Gamma)} L(\Gamma_n^{(1)}) $ for all $j $, \cite[Lemma 2.5, Proposition 2.6]{Va10}, in conjunction with the factoriality of each $P_j $, imply $P_j\prec_{L(\Gamma)}^s L(\Gamma_n^{(1)}) $. 
  Proposition \ref{prop:StrongIntertwiningMultiple} would then give $L(\Gamma)\prec_{L(\Gamma)} L(\Gamma),  $ which implies $\Gamma_n^{(1)} $ is finite index in $\Gamma $
 once again leading to a contradiction.
\end{proof}
The following proposition is the key ingredient in decomposing a group as a product: if we may find an subgroup of $\Sigma< \Gamma \in \Quot_n(\mathcal{C}_\text{rss})$ then we may also find another subgroup commuting with $\Sigma $ so that $\Gamma $ is commensurable to the direct product $\Sigma\times \Omega $. The proof of this proposition closely follows the proof of \cite[Theorem 4.3]{CdSS15}.

\begin{prop}\label{prop:finiteIndexAlgebrastoGroups}
Let $\Gamma\in \Quot_n(\mathcal{C}_\text {rss}) $ be an icc group and denote by $L(\Gamma)=M $.  Suppose we have  subgroup $\Sigma<\Gamma $,  and a projection $p\in L(\Sigma)'\cap M $ so that $\Sigma\in \Quot_{j}(\mathcal{C}_\text{rss}) $ and
\begin{align*}
p[L(\Sigma)\vee (L(\Sigma)'\cap M)]p\subset pMp
\end{align*}
is a finite index inclusion of II$_1 $ factors.  
Then we may find commuting subgroups $\Sigma_1,\Sigma_2< \Gamma $ such that $ [\Sigma:\Sigma_1]<\infty $ and $[\Gamma:\Sigma_1\times\Sigma_2]<\infty $.  Furthermore, if $ \Sigma\in \Quot_j(\mathcal{C}_\text{rss})$, then $\Sigma_2\in\Quot_{n-j}(\mathcal{C}_\text{rss}) $. 
\end{prop}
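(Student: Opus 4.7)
The plan is to produce the commensurated direct product decomposition in three stages: identify a group-theoretic envelope for $L(\Sigma)'\cap M$, descend from the finite-index algebra inclusion to a finite-index group inclusion, and then refine the resulting \emph{virtually commuting} pair into an honest commuting pair using the icc hypothesis on $\Gamma$.

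First I would verify, via a Fourier argument, that $L(\Sigma)'\cap M\subseteq L(\mathcal{V}_\Gamma(\Sigma))$: any $x=\sum_g c_g u_g\in M$ commuting with every $u_\sigma$, $\sigma\in\Sigma$, satisfies $c_{\sigma g\sigma^{-1}}=c_g$, and $\ell^2$-summability forces every $\Sigma$-conjugacy class appearing in the support of $x$ to be finite. Since $\mathcal{V}_\Gamma(\Sigma)$ is easily seen to be a subgroup of $\Gamma$, setting $H=\langle \Sigma,\mathcal{V}_\Gamma(\Sigma)\rangle$ produces a chain
$$p\bigl[L(\Sigma)\vee (L(\Sigma)'\cap M)\bigr]p\subseteq pL(H)p\subseteq pMp,$$
whose outer inclusion is of finite index by hypothesis, hence so is $pL(H)p\subseteq pMp$. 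Because $p\in L(\mathcal{V}_\Gamma(\Sigma))\subseteq L(H)$, the algebra-to-group converse recalled in the preliminaries (applied with $\Omega=H$, $\Lambda=\Theta=\Gamma$, $z=1$) yields $[\Gamma:H]<\infty$.

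The principal obstacle is the refinement step: turning the pair $(\Sigma,\mathcal{V}_\Gamma(\Sigma))$---which only virtually commutes---into a genuinely commuting pair. Here I would follow the clean-up strategy of \cite[Theorem 4.3]{CdSS15}. By definition each $\gamma\in\mathcal{V}_\Gamma(\Sigma)$ centralizes a finite-index subgroup $C_\Sigma(\gamma)\leq\Sigma$, so any finite subset of $\mathcal{V}_\Gamma(\Sigma)$ admits a common finite-index centralizer in $\Sigma$. The plan is to choose a suitable finite-index subgroup $\Sigma_1\leq\Sigma$ and set $\Sigma_2=C_\Gamma(\Sigma_1)$, then show that $\mathcal{V}_\Gamma(\Sigma)$ is absorbed (up to a finite set) by $\Sigma\cdot \Sigma_2$, so that the finite-index bound $[\Gamma:H]<\infty$ passes to $[\Gamma:\Sigma_1\Sigma_2]<\infty$. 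The icc hypothesis on $\Gamma$ is essential in this step: it suppresses the FC-central components of $\Sigma$ that would otherwise obstruct simultaneously centralizing a potentially uncountable family of elements of $\mathcal{V}_\Gamma(\Sigma)$, and the $\Quot(\mathcal{C}_\text{rss})$ hypothesis supplies the non-amenability needed to keep the relevant corners from degenerating.

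The Hirsch-length accounting is then immediate. The subgroup $\Sigma_1$ is commensurable to $\Sigma\in\Quot_j(\mathcal{C}_\text{rss})$, hence $\Sigma_1\in\Quot_j(\mathcal{C}_\text{rss})$ by Proposition \ref{prop:QuotProperties}(3); since $\Sigma_1\times\Sigma_2$ has finite index in $\Gamma\in\Quot_n(\mathcal{C}_\text{rss})$ and $\Sigma_1,\Sigma_2$ are commuting infinite subgroups (or $\Sigma_2$ is finite, in which case $j=n$), Corollary \ref{cor:QuotnCommutingGroups} forces $\Sigma_2\in\Quot_{n-j}(\mathcal{C}_\text{rss})$, completing the proof.
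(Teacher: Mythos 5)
Your plan tracks the paper's argument, which itself transplants the CdSS15 framework: take $\Sigma_2=\mathcal{V}_\Gamma(\Sigma)$, use the Fourier-support inclusion $L(\Sigma)'\cap M\subseteq L(\mathcal{V}_\Gamma(\Sigma))$ together with the algebra-to-group index converse to get $[\Gamma:\Sigma\cdot\mathcal{V}_\Gamma(\Sigma)]<\infty$, invoke Claims 4.9--4.12 of \cite{CdSS15} to extract a finite-index $\Sigma_1\leqslant\Sigma$ commuting with $\Sigma_2$, and finally bookkeep Hirsch lengths via Corollary~\ref{cor:QuotnCommutingGroups}. That high-level outline is correct, and your choice $\Sigma_2=C_\Gamma(\Sigma_1)$ rather than $\Sigma_2=\mathcal{V}_\Gamma(\Sigma)$ directly is an inessential variant.

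There is, however, a genuine gap in the refinement step, and you have misattributed the mechanism that fills it. Before the Claims 4.9--4.12 of \cite{CdSS15} can be applied one must know that $\Sigma\cap\mathcal{V}_\Gamma(\Sigma)$ is \emph{finite}; a priori this intersection is only a normal amenable subgroup of $\Sigma$, being an increasing union of virtually abelian groups as in the first half of \cite[Claim~4.8]{CdSS15}. You attribute the control of this ``FC-central'' piece to the icc hypothesis on $\Gamma$ plus a vague appeal to non-amenability from $\Quot(\mathcal{C}_\text{rss})$, but neither property accomplishes this on its own. The paper's actual mechanism is the relative strong solidity dichotomy of $\mathcal{C}_\text{rss}$: one passes $L(\Sigma\cap\mathcal{V}_\Gamma(\Sigma))$ through the comultiplication $\Delta:M\to M\bar\otimes L(\Gamma_1)$ associated to the top quotient $\Gamma_1\in\mathcal{C}_\text{rss}$; since $\Sigma$ normalizes $\Sigma\cap\mathcal{V}_\Gamma(\Sigma)$ and is non-amenable, the normalizer branch of the dichotomy is excluded and one obtains $L(\Sigma\cap\mathcal{V}_\Gamma(\Sigma))\prec\C 1$, whence finiteness by \cite[Prop.~2.6]{CdSS15}. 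The icc hypothesis on $\Gamma$ enters only at the very last step, to upgrade the finiteness of $\Sigma_1\cap\Sigma_2$ (together with $[\Gamma:\Sigma_1\Sigma_2]<\infty$) to $\Sigma_1\cap\Sigma_2=\{e\}$ so that the product is genuinely direct. Without the solidity-dichotomy step your argument does not close.
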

\begin{proof}
Letting $\Sigma_2=\set{\gamma\in \Gamma : |\gamma|^\Sigma<\infty }$ and proceeding as in \cite[claim 4.7]{CdSS15}, we see $[\Gamma: \Sigma_2\Sigma]<\infty $. Now, the first half of  \cite[Claim 4.8]{CdSS15} demonstrates $\Sigma\cap \Sigma_2$ is amenable since it can be written as an increasing tower of amenable groups.  
 Let $\Gamma_1$ act trivially on $\C $, $\Gamma\cong \Gamma_n\to\cdots \to \Gamma_1 $ is a chain witnessing $\Gamma\in \Quot_n(\mathcal{C}_{\text rss}) $. Since $L(\Sigma\cap \Sigma_2) $ is amenable and $\Sigma $ normalizes $\Sigma_2\cap \Sigma $, the dichotomy of $\mathcal{C}_{\text rss} $ will imply $L(\Sigma_2\cap \Sigma)\prec \C 1$.  Thus, \cite[Proposition 2.6]{CdSS15} implies $\Sigma\cap \Sigma_2 $ is finite.  

Claims 4.9--4.12 in the proof of   \cite[Theorem 4.3]{CdSS15} provides the existence of  a subgroup $\Sigma_1\leqslant\Sigma $ satisfying $[\Sigma:\Sigma_1]<\infty $, $[\Gamma:\Sigma_1 \Sigma_2]<\infty $, and $[\Sigma_2,\Sigma_1]=\set{e} $.   Since $\Sigma\cap \Sigma_2 \geqslant \Sigma_1\cap \Sigma_2$, it follows $\Sigma_1\cap \Sigma_2 $  is finite as well. 
 Since $\Gamma $ is icc and $[\Gamma:\Sigma_1 \Sigma_2]<\infty $ then $ \Sigma_1\cap \Sigma_2=\set{e}$.  Thus $[\Gamma:\Sigma_1\times \Sigma_2]<\infty $.
 
  Now if we also had assumed $\Sigma\in\Quot_j(\mathcal{C}_\text{rss}) $, Corollary \ref{cor:QuotnCommutingGroups} yields $\Sigma_2\in \Quot_{n-j}(\mathcal{C}_\text{rss}) $.
\end{proof}

\begin{thrm}\label{thrm:main2Algebras}
Let $\Gamma\in \Quot_n(\mathcal{C}_\text{rss}) $ be an icc group and $q\in L(\Gamma)$ a projection.  Suppose  $P_1,\ldots, P_k\subset qL(\Gamma)q=M $ are diffuse commuting II$_1$ factors such that $[M: P_1\vee\cdots\vee P_k]<\infty $. Then  there exist icc groups $\Sigma_i\in \Quot_{n_i}(\mathcal{C}_\text{rss}) $, non-zero projections $p_i\in P_i $, finite index subfactors $D_i\subset p_iP_ip_i $, and a unitary $u\in M $ such that 
\begin{itemize}
\item $\Gamma$ is commensurable to  $ \Sigma_1\times \cdots\times \Sigma_k $,
\item $\sum_{i=1}^k n_i=n $,
\item $D_i \subset p_iu^*L(\Sigma_i)up_i$ is a finite index inclusion of II$_1$ factors. 
\end{itemize}

\end{thrm}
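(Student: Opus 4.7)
My plan is to prove Theorem \ref{thrm:main2Algebras} by induction on the Hirsch length $n$. The base case $n=1$ is immediate: Lemma \ref{lem:MaxNumberCommutingAlg} forces $k=1$, so $P_1=qL(\Gamma)q$ and we may take $\Sigma_1=\Gamma$, $D_1=p_1P_1p_1$. For the inductive step, assume the statement for every icc group in $\Quot_m(\mathcal{C}_\text{rss})$ with $m<n$ and fix $\Gamma\in\Quot_n(\mathcal{C}_\text{rss})$. The subcase $k=1$ is again trivial, so take $k\geq 2$.

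First, isolate one factor. Applying Proposition \ref{prop:Intertwine1Level} to the commuting pair $(P_1,\hat{P_1})$ and relabeling the $P_j$'s if necessary, I may assume $P_1\prec_M L(\Gamma_n^{(1)})$. The corner-intertwining result \cite[Proposition~2.4]{CKP14} then furnishes a projection $p_1\in P_1$, a projection $r\in L(\Gamma_n^{(1)})$, and a $*$-isomorphism $\psi:p_1P_1p_1\to A\subset rL(\Gamma_n^{(1)})r$ implemented by a partial isometry in $M$, such that $A\vee(A'\cap rL(\Gamma_n^{(1)})r)\subset rL(\Gamma_n^{(1)})r$ is a finite-index inclusion of II$_1$ factors.

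Next, upgrade this algebraic embedding to a group-level splitting. Following the strategy of \cite[Theorem~4.3]{CdSS15} and combining the structure of $A$ with Lemma \ref{lem:TrivialNormalizer}, I locate a subgroup $\Sigma_1\leqslant\Gamma$ with $\Sigma_1\in\Quot_{n_1}(\mathcal{C}_\text{rss})$ for some $1\leq n_1<n$, a projection $q_1\in L(\Sigma_1)'\cap M$, and a unitary $u_1\in M$ such that a finite-index subfactor $D_1\subset p_1P_1p_1$ conjugates into $u_1^*L(\Sigma_1)u_1$ and $q_1[L(\Sigma_1)\vee(L(\Sigma_1)'\cap M)]q_1\subset q_1Mq_1$ is a finite-index inclusion of II$_1$ factors. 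Proposition \ref{prop:finiteIndexAlgebrastoGroups} now produces a commuting subgroup $\Omega\leqslant\Gamma$ with $\Omega\in\Quot_{n-n_1}(\mathcal{C}_\text{rss})$ so that $\Gamma$ is commensurable to $\Sigma_1\times\Omega$.

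To close the induction, I exploit the fact that $P_2,\ldots,P_k$ commute with $P_1$, so after the cornering and conjugation above their images lie in the relative commutant of a corner of $L(\Sigma_1)$ inside a corner of $M$; by the product-up-to-commensurability decomposition this relative commutant is a finite-index extension of a corner of $L(\Omega)$. Hence $P_2,\ldots,P_k$ yield $k-1$ commuting diffuse II$_1$ subfactors of such a corner of $L(\Omega)$ whose join has finite index, and the inductive hypothesis applied to $\Omega\in\Quot_{n-n_1}(\mathcal{C}_\text{rss})$ delivers the remaining subgroups $\Sigma_2,\ldots,\Sigma_k$ together with the requisite finite-index subfactors. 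Tracking the cornerings and conjugations across both stages assembles the full decomposition of $\Gamma$ commensurable to $\Sigma_1\times\cdots\times\Sigma_k$ with $\sum_{i=1}^k n_i=n$. The main obstacle is the middle step: converting the abstract embedding $\psi(p_1P_1p_1)=A\subset L(\Gamma_n^{(1)})$ into an honest subgroup $\Sigma_1\leqslant\Gamma$ requires a delicate adaptation of the height and clustering techniques of \cite{CdSS15} to the class $\Quot(\mathcal{C}_\text{rss})$, in which the finite-index hypothesis on $A\vee(A'\cap L(\Gamma_n^{(1)}))$ must be leveraged against the defining dichotomy of $\mathcal{C}_\text{rss}$ to force enough rigidity to extract a genuine subgroup with the commensurability needed to feed Proposition \ref{prop:finiteIndexAlgebrastoGroups}.
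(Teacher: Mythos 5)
Your outline diverges from the paper's argument in two places, and the first of them is a genuine gap rather than a stylistic choice. You begin by ``applying Proposition~\ref{prop:Intertwine1Level} to the pair $(P_1,\hat{P_1})$ and relabeling so that $P_1\prec_M L(\Gamma_n^{(1)})$.'' But the dichotomy only gives ``$P_1\prec$ or $\hat{P_1}\prec$,'' and the second alternative is a statement about the join $P_2\vee\cdots\vee P_k$, not about any individual $P_j$; there is no relabeling that converts it into ``some single $P_j$ intertwines.'' It is entirely consistent with the hypotheses that \emph{no} individual $P_i$ intertwines into $L(\Gamma_n^{(1)})$ while every complementary join $\hat{P_i}$ does. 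In fact the paper's proof is built around this scenario: it first shows (via Proposition~\ref{prop:StrongIntertwiningMultiple}) that it is impossible for $P_i\prec_M L(\Gamma_n^{(1)})$ to hold for all $i$, hence there is an index $i$ with $P_i\not\prec$ and $\hat{P_i}\prec$, and it is the \emph{complementary join} $\hat{P_k}$ that is sent down into $L(\Gamma_n^{(1)})$, with $P_k$ kept ``upstairs.'' Your strategy of peeling off a single $P_1$ therefore starts from an assumption you have no right to make, and if you run into the worst case you are stuck at the very first step.

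The other divergence is in what you call the ``middle step,'' and while you candidly acknowledge it, the vagueness there hides the heart of the argument. You invoke ``the height and clustering techniques of \cite{CdSS15}'' to pass from the algebraic embedding $\psi(p_1P_1p_1)=A\subset rL(\Gamma_n^{(1)})r$ to a subgroup $\Sigma_1$ with $q_1[L(\Sigma_1)\vee(L(\Sigma_1)'\cap M)]q_1\subset q_1Mq_1$ of finite index, but that machinery is what the paper uses \emph{inside the proof of Proposition~\ref{prop:finiteIndexAlgebrastoGroups}}, not to produce $\Sigma_1$. What actually produces the required subgroup is quite different: the paper takes $\Sigma=\Gamma_n^{(1)}$ itself (or, after an inner inductive decomposition of $L(\Gamma_n^{(1)})$ when $B'\cap q_1L(\Gamma_n^{(1)})q_1$ is diffuse, a direct product $\Lambda_1\times\cdots\times\Lambda_k$ commensurable to it) and reverses the intertwining via the \emph{downward basic construction} of Jones, transporting a corner of $L(\Gamma_n^{(1)})$ into a corner of $\hat{P_k}$ by a concrete partial isometry $w$ and thereby verifying the finite-index hypothesis of Proposition~\ref{prop:finiteIndexAlgebrastoGroups} directly. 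That reversal, together with the bookkeeping of central projections to reach a II$_1$-factor inclusion, is precisely the technical content you would need to supply; it cannot be waved through by citing a technique that addresses a different step. Once these two points are repaired you would essentially have reconstructed the paper's proof, but as written the proposal does not go through.
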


\begin{proof} 
As our theorem is taken up to commensurability, we will treat the case when $\Gamma=\Gamma_n$ where 
\begin{align*}
\Gamma_n\to \Gamma_{n-1}\to\cdots\to \Gamma_1\to 1
\end{align*}  
witnesses $\Gamma\in \Quot_n(\mathcal{C}_\text{rss}) $.  Furthermore, Lemma \ref{lem:TrivialNormalizer} implies we may assume $\mathcal{N}_{pMp}(P_i)'' $ is a factor.  
By Proposition \ref{prop:Intertwine1Level}, for every $i$ we have either $\hat{P_i}\prec_M L(\Gamma_n^{(1)})$ or $P_i\prec_M L(\Gamma_n^{(1)}) $. If we assume   $P_i\prec_M L(\Gamma_n^{(1)}) $, then $P_i\prec_M^s L(\Gamma_n^{(1)}) $ and hence $P_1\vee \cdots \vee P_k\prec L(\Gamma_n^{(1)}) $.  Since $P_1\vee\cdots\vee P_k $ is a finite index subalgebra of $L(\Gamma) $, then  $[\Gamma:\Gamma_n^{(1)}]<\infty $ contradicting that $\Gamma/\Gamma_n^{(1)}\in\mathcal{C}_\text{rss} $.  
Thus, 
there exists  $i $ such that $\hat{P_i}\prec_{M} L(\Gamma_n^{(1)}) $ but $P_i\not\prec_M L(\Gamma_n^{(1)}) $.  For simplicity, we consider the case $i=k $.   \cite[Proposition 2.4]{CKP14} give the existence of projections $p\in \hat{P_k}, q_1\in L(\Gamma_n^{(1)}) $, a partial isometry $v\in M $, and a $* $-isomorphism $\psi:p\hat{P_k}p\to B\subset q_1L(\Gamma_n^{(1)})q_1 $ such that
\renewcommand{\labelenumi}{(\alph{enumi})}
\begin{enumerate}
\item $B\vee (B'\cap q_1L(\Gamma_n^{(1)})q_1)\subset q_1L(\Gamma_n^{(1)}) q_1$ is a finite index inclusion of algebras,
\item $\psi(x)v=vx $ for all $x\in p\hat{A}_kp $.
\end{enumerate}
As in the proof of Lemma \ref{lem:MaxNumberCommutingAlg}, we may assume $p=p_1\cdots p_{k-1} $ where $p_i\in P_i $ are projections such that  $B=\psi(p\hat{P_k}p)=\psi(p_1P_ip)\vee \cdots \vee \psi(p_{k-1}P_{k-1}p)=B_1\vee \cdots \vee B_{k-1} $.  Thus we have
\begin{enumerate}[resume]
\item $\psi(p_iP_ip)=B_i $,
\item $\psi(x)v=vx $ for all $x\in p\hat{P_k}p $,
\item $B_1\vee \cdots \vee B_{k-1}\vee (B'\cap q_1L(\Gamma_n^{(1)})q_1) \subset q_1L(\Gamma_n^{(1)})q_1$ is a finite index inclusion of algebras.
\end{enumerate}

We first assume $n=2 $. In this case, Lemma \ref{lem:MaxNumberCommutingAlg} implies $k=2 $.
Since $\Gamma_n^{(1)}\in \Quot_1(\mathcal{C}_{\text rss}) $, Lemma \ref{lem:Quot1Prime} implies $\mathcal{Z}( B'\cap q_1L(\Gamma_n^{(1)})q_1)$ cannot have any diffuse part and therefore is completely atomic.  Thus multiplying $v $ by some minimal central projection $ q'\in B'\cap q_1L(\Gamma_n^{(1)})q_1$ so that $vq'\neq 0 $, we may assume $\psi(pP_1p)=B\subset q_1L(\Gamma_n^{(1)})q_1 $ is a finite index inclusion of factors.  Moreover, $\dim_\C(\mathcal{Z}(qL(\Gamma_n^{(1)}))q)\leq [qL(\Gamma_n^{(1)})q:B]_{PP}<\infty  $ since $B$ is a  II$_1 $ factor.  Thus, after multiplying again by a minimal central projection, we may assume $B\subset q_1L(\Gamma_n^{(1)})q_1 $ is a finite index inclusion of II$_1 $ factors.  
We claim there exists a projection $r\in L(\Gamma_n^{(1)})'\cap M $ such that.  
\begin{align}\label{claim:GroupANd CommutandFinite}
r[ L((\Gamma_n^{(1)})\vee L(\Gamma_n^{(1)})'\cap M) ]r\subset rMr
\end{align}
is a finite index inclusion of II$_1 $ factors.\\
To this end,   the downward basic construction \cite[Lemma 3.1.8]{Jo81} gives a projection $e\in q_1L(\Gamma_n^{1})q_1 $ and a subfactor $C\subset B\subset q_1L(\Gamma_n^{(1)})q_1=\generator{B,e} $ such that $[B:C]=[q_1L(\Gamma_n^{(1)})q_1:B] $, $Ce=eL(\Gamma_n^{(1)})e $ and $Ce\cong C $.  Then the restriction $\psi^{-1}:C\to D\subset pP_1p $ is a $* $-isomorphism such that $[pP_1p: D]<\infty $ with $\psi^{-1}(y)v^*=v^*y $ for all $y\in C $. Let $\theta: Ce\to C $ be the $*$-isomorphism given by $xe\mapsto x $ and denote by $v'=ev $.  If we suppose $v'=0 $, we would have $vv^*xe=xvv^*e =0$ for all $x\in B $.  As $ \generator{B,e}e=Be$, $vv^*t=0 $ for all $t\in \generator{B,e} $.  However, since $q $ is the central support of $e $ in $\generator{B,e} $, this would yield $vv^*=0 $.  Thus it follows that $\varphi=\psi^{-1}\circ \theta: eL(\Gamma_n^{(1)})e\to D $ is a $* $-isomorphism satisfying 
\begin{align}\label{eq:IntertwineBack1}
\varphi(y)w^*=w^* x\,\,  \text{for all } y\in eL(\Gamma_n^{(1)})e
\end{align}
where $w^*$ is the partial isometry from the polar decomposition of $v^*e=|v^*e|w^* $.  Note that $s=w^*w\in D'\cap  pMp  $ and $ww^*\leq e $.   Thus equation \eqref{eq:IntertwineBack1},  we obtain
\begin{align}\label{eq:Conjugatebyw}
w^*L(\Gamma_n^{(1)})w=\varphi(eL(\Gamma_n^{(1)})e)w^*w=Ds
\end{align}
\begin{align}
(w^*L(\Gamma_n^{(1)})w)'\cap sMs=(Ds)'\cap sMs.\label{eq:ConjugateCommutantw}
\end{align}
First note $P_2p\subset D'\cap pMp  $.  Since $D
\subset pP_1p $ is a finite index inclusion, so are the inclusions $D\vee P_2p\subset p(P_1\vee P_2)p\subset pMp $ and hence $D\vee P_2p\subset pMp $  is a finite index inclusion of algebras.  
 By the local index formula, we also have $Ds\vee s(D'\cap   M)s\subset sMs   $ is also a finite index inclusion of II$_1$ factors.  

  Let $r=ww^* $ and $u\in M$ a unitary with $w^*=ur $. Conjugating \eqref{eq:Conjugatebyw} and \eqref{eq:ConjugateCommutantw} by $u $ implies
$r[L(\Gamma_n^{(1)})\vee L(\Gamma_n^{(1)})'\cap M]r= L(\Gamma_n^{(1)})r\vee (L(\Gamma_n^{(1)})'\cap rMr)\subset rMr $ is a 
 finite index inclusion of  II$_1$ factors (after shrinking $r$ is necessary). By Proposition \ref{prop:finiteIndexAlgebrastoGroups}, there exists a finite index subgroup $\Sigma_1<\Gamma_n^{(1)} $  such that $\Sigma_i\in \Quot_{1}(\mathcal{C}_\text{rss}) $ with $[\Gamma:\Sigma_1\times\Sigma_2]<\infty $ and $rP_2r\subset ru^*L(\Sigma_2) ur$, where $\Sigma_2=V_\Gamma(\Gamma_n^{(1)}) $.  Since $ru^*L(\Gamma_n^{(1)})ur\subset rP_1r $ is a finite index inclusion of II$_1 $ factors, so is the inclusion $ru^*L(\Sigma_1)r\subset rP_1r $.  Performing the downward basic construction gives a subfactor $P_1f\subset ru^*L(\Sigma_1)r $ where $f\in P_1'\cap rMr $.  
 
Since $rP_2r\subset r(P_1'\cap M)r $ is a finite Pimnser-Popa index inclusion of algebras, so is the inclusion $rP_2r\subset ru^* L(\Omega)ur $.  Thus cutting once again by a minimal projection we have $r_2P_2r_r\subset r_2u^*L(\Sigma_2)ur_2 $ is a finite index inclusion of II$_1$ factors.

Now suppose the result holds for all icc groups $\Lambda\in \Quot_{n-1}({\mathcal{C}_\text{rss}}) $ for some  $n\in \N $. Take $\Gamma\in \Quot_n(\mathcal{C}_{rss}) $ an icc group.  Proceeding as in the case when $n=2 $, we may assume $\hat{P_k}\prec_{M}L(\Gamma_n^{(1)}) $.  More precisely, since the center of $P_1\vee \cdots \vee P_k $ is trivial, by \cite[Lamma 2.5, Proposition 2.6]{Va10} $\hat{P_k}\prec_{M}^s L(\Gamma_n^{(1)}) $.  
\cite[Proposition 2.4]{CKP14} give the existence of projections $p\in P_1, q\in L(\Gamma_n^{(1)}) $, a partial isometry $v\in M $ and a $* $-isomorphism $\psi:pP_1p\to B\subset q_1L(\Gamma_n^{(1)})q_1 $ such that
\begin{enumerate}[resume]
\item $B\vee (B'\cap q_1L(\Gamma_n^{(1)})q_1)\subset q_1L(\Gamma_n^{(1)})q_1 $ is a finite index inclusion of algebras,
\item $\psi(x)v=vx $ for all $x\in pP_1p $.
\end{enumerate}
If $B_k=B'\cap q_1L(\Gamma_n^{(1)})q_1 $  was not diffuse, we cutting by a minimal central projection to obtain $B\subset q_1L(\Gamma_n^{(1)})q_1 $ is finite Pimnser-Popa index inclusion of algebras. As before, $\mathcal{Z}(qL(\Gamma_n^{(1)})) $ is finite dimensional.  Thus, we cut by an appropriate minimal central projection to obtain a finite index inclusion of II$_1$ factors and proceed exactly as in the case when $n=2$.

 Now suppose $B$ and $ B_k =B'\cap q_1L(\Gamma_n^{(1)})q_1$ are both diffuse.  Then,  by cutting by a minimal central projection if necessary, we have $B\vee B_k\subset q_1L(\Gamma_n^{(1)})q_1 $ is a finite index inclusion of II$_1$ factors.  By the  induction hypothesis, there exists a unitary $w\in q_1L(\Gamma_n^{(1)})q_1 $,subgroups  $\Lambda1,\ldots, \Lambda_k $ of $\Gamma_n^{(1)} $ and projections $p_i\in B_i $, $q_i\in L(\Lambda_i)$  so that 
 \begin{itemize}
\item $w(p_1B_ip_1)w^*\subset q_1L(\Lambda_i)q_1 $ is a finite index inclusion of II$_1$ factors
\item $\Gamma_n^{(1)} $ is commensurable to $\Lambda_1\times \cdots\times \Lambda_k  $
\item $\Lambda\in \Quot_{m_1}(\mathcal{C}_\text{rss}) $ with $1\leq m_1<n-1 $.
\item $\sum m_i=n-1 $.
\end{itemize}
Letting $\Lambda=\Lambda_1\times \cdots\times \Lambda_k$ and proceeding as in the case when $n=2 $, there exists a projection $s $ such that
\begin{align*}
s[L(\Lambda)\vee L(\Lambda)'\cap M]s\subset sMs
\end{align*}
is a finite index inclusion of II$_1$ factors. 
  Applying Lemma \ref{prop:finiteIndexAlgebrastoGroups} and following the same procedure as in the case when $n=2$, we may find a finite index subgroup $\Lambda_1< \Lambda $ so that $r\hat{P_j}r\subset ru^*L(\Lambda_1)ur $  and $rP_kr\subset L(\Lambda_2) $ are finite index inclusions of II$_1 $ factors with $[\Gamma: \Lambda_1\times \Lambda_2]<\infty $.  Furthermore, we may assume $$r\hat{P_j}r = rP_1r\vee \cdots \vee rP_{k-1}r.$$
Letting $\Gamma_k=\Sigma_2 $ and once again applying the induction hypothesis, we may appropriately identify corners of $P_i $ with groups $\Gamma_i $ so that $\Gamma_i\in \Quot_{n_i}(\mathcal{C}_\text{rss}) $ with $n_1+\cdots+n_k=n$.

\end{proof}
The above result may be extended to amplifications of the algebra $L(\Gamma) $.  

\begin{cor}\label{cor:main2AlgebrasAmplfication}
Let $\Gamma\in \Quot_n(\mathcal{C}_\text{rss}) $ be an icc group and denote by $M=L(\Gamma)^t $.  If $P_1,\ldots, P_k $ are diffuse commuting II$_1$ factors such that $[M:P_1\vee \cdots P_k]<\infty $, then $k\leq n $  and there exist icc groups $\Sigma_1\in \Quot_{n_i}(\mathcal{C}_\text{rss}) $, non-zero projections $p_i\in P_i $, finite index subfactors $D_i\subset p_iP_ip_i $ and a unitary $u\in M $ such that
\begin{itemize}
\item $\Gamma$ is commensurable to  $ \Sigma_1\times \cdots\times \Sigma_k $,
\item $\sum_{i=1}^k n_i=n $,
\item $D_i \subset p_iu^*L(\Sigma_i)up_i$ is a finite index inclusion of II$_1$ factors. 
\end{itemize}
\end{cor}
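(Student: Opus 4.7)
The plan is a standard amplification upgrade of Theorem \ref{thrm:main2Algebras}: I will shrink to a corner of $M$ that is isomorphic to a corner of $L(\Gamma)$ and then invoke the theorem directly. When $t \leq 1$ we have $M \cong qL(\Gamma)q$ for some projection $q \in L(\Gamma)$, so Theorem \ref{thrm:main2Algebras} applies verbatim; I therefore assume $t > 1$.

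Since $P_1 \subset M$ is a II$_1$ subfactor with $1_{P_1} = 1_M$, I choose a non-zero projection $p \in P_1$ with $\tau_M(p) \leq 1/t$. Then $pMp \cong fL(\Gamma)f$ for some projection $f \in L(\Gamma)$ of trace $t\tau_M(p) \leq 1$. Because $p$ commutes with $P_i$ for every $i \geq 2$, the map $x \mapsto xp$ identifies $P_i$ with the II$_1$ factor $pP_i \subset pMp$ (its kernel is a two-sided ideal in the factor $P_i$, hence zero). Consequently $pP_1p,\, pP_2,\ldots, pP_k \subset pMp$ are $k$ commuting diffuse II$_1$ factors whose join $p(P_1 \vee \cdots \vee P_k)p$ has finite index in $pMp$ by Theorem \ref{thrm:IndexofvnAlgebras}(ii).

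Applying Theorem \ref{thrm:main2Algebras} inside $pMp$ (identified with the corner $fL(\Gamma)f$) yields the bound $k \leq n$, icc groups $\Sigma_i \in \Quot_{n_i}(\mathcal{C}_\text{rss})$ with $\sum_i n_i = n$ and $\Gamma$ commensurable to $\Sigma_1 \times \cdots \times \Sigma_k$, non-zero projections $q_i$ in the appropriate corner of $P_i$, finite index subfactors $D_i \subset q_i P_i q_i$, and a unitary $v \in pMp$ realizing the intertwinings $D_i \subset q_i v^* L(\Sigma_i) v q_i$ inside $pMp$. To transfer these conclusions back to $M$, I extend $v$ to the unitary $u = v + (1-p) \in M$; since each $q_i \leq p$, we obtain $q_i u^* L(\Sigma_i) u q_i = q_i v^* L(\Sigma_i) v q_i$, so every finite index inclusion survives intact.

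The only subtlety — and the main (mild) obstacle — is the bookkeeping required to interpret the copies of $L(\Sigma_i) \subset L(\Gamma)$ as subalgebras of the amplification $M$: one fixes once and for all an identification $pMp \cong fL(\Gamma)f$, transports the relevant subalgebras accordingly, and verifies that the unitary produced inside the corner extends without disturbing the established intertwinings. This step is purely formal and requires no new analytic input beyond what has already been developed for Theorem \ref{thrm:main2Algebras}.
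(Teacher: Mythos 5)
Your proof is correct and takes essentially the same approach as the paper: shrink to a corner of $M$ that is (after a unitary conjugation) a corner of $L(\Gamma)$, apply Theorem~\ref{thrm:main2Algebras} there, and then transport the resulting projections, subfactors, and unitary back to $M$. Your choice of cutting projection $p\in P_1$ is in fact slightly cleaner than the paper's (which cuts by an essentially arbitrary projection $r\in M$ of small trace), since taking $p$ inside $P_1$ makes it immediate that the cut algebras $pP_1p,\,pP_2,\ldots,pP_k\subset pMp$ are still commuting diffuse II$_1$ factors with $p(P_1\vee\cdots\vee P_k)p=pP_1p\vee\cdots\vee pP_kp$ of finite index in $pMp$, and that the unitary $v$ extends to $u=v+(1-p)\in M$ without disturbing the intertwinings.
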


\begin{proof}
Fix an integer $N>t>0 $.  Since  $M=L(\Gamma)^t $, there exists a projection $q\in M_N(\C)\bar\otimes L(\Gamma) $ so that $qM_N(\C)\bar\otimes L(\Gamma)  q=M $ and $\operatorname{Tr}(q)=t $, where $Tr:M_N(\C)\bar\otimes L(\Gamma) \to \C $ is the trace induced on  $ M_N(\C)\bar\otimes L(\Gamma)$ from the trace $ \tau$ on $L(\Gamma) $.  Now fix a projection $r\in M $ with $\tau(r) =1/N$ and $\tilde{r}\leq q $, where $\tilde{r} $ is the image of $r $ inside $M_n(\C)\bar\otimes L(\Gamma) $ in the obvious way.  Then $\tilde{r}u_0^*Mu_0\tilde{r} =sL(\Gamma)s $ for some non-zero projection $s\in L(\Gamma) $ and unitary $u_0\in M $.

Labeling $Q_i=ru_0^*P_iu_0r $ for each $i\in\set{1,\ldots, k} $, we see $\bigvee_{i=1}^k Q_i\subset sL(\Gamma)s$ is a finite index inclusion of algebras.   Theorem \ref{thrm:main2Algebras} now finished the proof.  
\end{proof}

\begin{cor}\label{cor:ProductExact}
Let  $\Gamma\in\Quot_n(\mathcal{C}_\text{rss}) $ be an icc group and suppose $L(\Gamma)^t\cong P_1\bar\otimes  P_2 $ for some diffuse von Neumann algebras $P_i $.  Then there exist groups $  \Gamma_1, \Gamma_2$, a unitary $u\in \mathcal{U}(L(\Gamma)) $, positive integers  $n_1,n_2 $, and  a scalar $s> 0 $  such that:
\begin{enumerate}
\item $\Gamma$ is commenarable to $\Gamma_1\times \Gamma_2 $ with $\Gamma_i\in \Quot_{n_i}(\mathcal{C}_\text{rss}) $ and $n_1+n_2=n $; and
\item $P_{1}^s=u L(\Gamma_1)u^* $ and $P_{2}^{t/s}=u L(\Gamma_2)u^* $.
\end{enumerate}
\end{cor}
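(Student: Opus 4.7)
The plan is to obtain this directly from Corollary \ref{cor:main2AlgebrasAmplfication}, specialized to $k=2$ and applied to the commuting diffuse II$_1$ factors $P_1,P_2\subset M:=L(\Gamma)^t$. Since $P_1\vee P_2=M$ trivially has finite index in itself, the hypotheses of that corollary are automatic. It delivers icc groups $\Gamma_i\in\Quot_{n_i}(\mathcal{C}_\text{rss})$ with $n_1+n_2=n$, a unitary $u\in M$, non-zero projections $p_i\in P_i$, and finite-index subfactors $D_i\subset p_iP_ip_i$ together with finite-index inclusions $D_i\subset p_iu^*L(\Gamma_i)up_i$, along with the commensurability of $\Gamma$ with $\Gamma_1\times\Gamma_2$. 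Part (1) follows at once.

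For part (2), write $Q_i:=u^*L(\Gamma_i)u\subset M$. The commensurability of $\Gamma$ with the icc group $\Gamma_1\times\Gamma_2$ forces $Q_1\vee Q_2$ to sit inside $M=P_1\bar\otimes P_2$ as a finite-index subfactor, with parallel commutation relations $P_1'\cap M=P_2$ (from the tensor decomposition) and $Q_1'\cap M=Q_2$ (using the identity $L(\Gamma_i)'\cap L(\Gamma_1\times\Gamma_2)=L(\Gamma_j)$ for icc $\Gamma_j$, together with the finite index). The common finite-index subfactor data $D_1\subset p_1P_1p_1\cap p_1Q_1p_1$ then forces, via a careful comparison of $D_1'\cap p_1Mp_1$ through each inclusion and absorption of the finite-dimensional slack into the unitary $u$, that $p_1Q_1p_1=p_1P_1p_1$ up to possibly shrinking $p_1$. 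Extracting the scalar $s=\tau(p_1)^{-1}$ (suitably normalized in the amplified algebra) yields $P_1^s=uL(\Gamma_1)u^*$, and the paired identity $P_2^{t/s}=uL(\Gamma_2)u^*$ then falls out from the tensor factorization $M=P_1^s\bar\otimes P_2^{1/s}$ together with the constraint that the overall amplification of $M$ equals $t$.

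The main obstacle will be this final upgrade from common finite-index subfactor to literal coincidence of the corners of $P_1$ and $Q_1$. In general, two subfactors of a II$_1$ factor sharing a common finite-index subfactor need not coincide; the upgrade here is enabled by the rigid tensor structure $M=P_1\bar\otimes P_2$ together with the icc-group-factor commutant identities that force $Q_i'\cap M=Q_{3-i}$. These constrain any finite-index ``slack'' in the subfactor inclusions to disappear after passing to a suitable corner and refining $u$, and tracking this refinement through the downward basic construction used in the proof of Corollary \ref{cor:main2AlgebrasAmplfication} is the main technical hurdle.
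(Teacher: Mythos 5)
Your Step 1 matches the paper exactly: both proofs begin by invoking Corollary \ref{cor:main2AlgebrasAmplfication} with $k=2$ to obtain the groups $\Gamma_1,\Gamma_2$, the projections $p_i$, the finite-index subfactors $D_i\subset p_iP_ip_i\cap p_iu^*L(\Gamma_i)up_i$, and the commensurability $\Gamma\sim\Gamma_1\times\Gamma_2$, which gives part (1) immediately.

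For part (2), however, your sketch has a real gap. You correctly observe that two subfactors of a II$_1$ factor sharing a common finite-index subfactor need not coincide, and you claim the tensor structure plus the commutant identities $Q_i'\cap M=Q_{3-i}$ "constrain the slack to disappear"; but you never give a mechanism by which this happens, and in fact the conclusion you are aiming for, $p_1Q_1p_1=p_1P_1p_1$ as subsets of $p_1Mp_1$, is not what one should expect. The correct output is a \emph{unitary conjugacy after amplification}, not an on-the-nose equality of corners, and the route to it is via Popa's intertwining machinery rather than by comparing the common subfactor $D_1$ directly: from the finite-index inclusion one extracts $P_1\prec L(\Omega_1)$ and (by reversing roles) $L(\Omega_1)\prec P_1$ for the commuting subgroups $\Omega_1,\Omega_2\leqslant\Gamma$ witnessing commensurability, and then one uses mutual strong intertwining together with the trivial relative commutant conditions $P_1'\cap M=P_2$, $L(\Omega_1)'\cap M\supset L(\Omega_2)$ to upgrade to conjugacy $P_1^s=uL(\Omega_1)u^*$. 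The paper does exactly this, but it offloads the entire argument to the proof of \cite[Theorem~4.14]{CdSS15}, after first extracting the commuting non-amenable subgroups $\Omega_1,\Omega_2$ with $[\Gamma:\Omega_1\Omega_2]<\infty$ from the commensurability. Your proposal never sets up the reverse intertwining $L(\Gamma_1)\prec P_1$, never passes to the subgroups $\Omega_i$ (which is needed because $\Gamma_1,\Gamma_2$ are only abstractly commensurable to $\Gamma$, not literal subgroups), and the formula $s=\tau(p_1)^{-1}$ is not justified. In short: the overall strategy is the right one and you correctly flagged where the difficulty lies, but the "comparison of $D_1'\cap p_1Mp_1$" heuristic does not substitute for the intertwining argument that the paper cites, and as written your proof of part (2) does not close.
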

Since the proof closely follow the proof of \cite[Theorem 4.14]{CdSS15}, we include only the relevant details.
\begin{proof}
By Theorem \ref{cor:main2AlgebrasAmplfication}, there exists groups $\Gamma_i \in \Quot_{n_{i}}(\mathcal{C}_\text{rss}) $, a unitary $ u\in L(\Gamma)^t$, and  projections $p_i\in P_i  $, and finite subfactors  $D_i\subset p_iP_ip_i$ so that
\begin{enumerate}
\item $\Gamma $ is commensurable to $\Sigma_1\times \Sigma_2 $,
\item $n_1+n_2=n $, and
\item  $D_i\subset p_iu^*L(\Sigma_i)up_i$.
\end{enumerate}
Thus, all that remains to show is the existence of a scalar $s $ so that $P_1^s=uL(\Gamma_1)u^* $ and $P_2^{t/s}=uL(\Gamma_2)u^* $.  Since $\Gamma $ is commensurable to $\Sigma_1\times \Sigma_2 $, there exist commuting non-amenable subgroups $ \Omega_1,\Omega_2\leqslant\Gamma$ such that $[\Gamma:\Omega_1\Omega_2]<\infty $.   Thus, we are now in position to follow the proof of \cite[Theorem 4.14]{CdSS15} exactly.
\end{proof}
Proceeding by way of induction, we obtain the following generalization of  Corollary \ref{cor:ProductExact}
\begin{cor}\label{cor:ProuctExactMultiple}
Let  $\Gamma\in\Quot_n(\mathcal{C}_\text{rss}) $ be an icc group and suppose $L(\Gamma)\cong P_1\bar\otimes\cdots\bar\otimes   P_k $ for some diffuse von Neumann algebras $P_1.\ldots, P_k $.  Then there exist subgroups $  \Gamma_1, ,\ldots \Gamma_k\leqslant\Gamma$, a unitary $u\in \mathcal{U}(L(\Gamma)) $, positive integers  $n_1,\ldots, n_k $, and  scalars $t_1,\ldots, t_k> 0 $ with $t_1 t_2\cdots t_k=t $  such that:
\begin{enumerate}
\item $\Gamma=\Gamma_1\times \cdots \times\Gamma_k $ with $\Gamma_i\in \Quot_{n_i}(\mathcal{C}_\text{rss}) $ and $\sum_{i=1}^kn_i=n $; and
\item $P_{i}^{t_i}=u L(\Gamma_i)u^* $ for every $i\in \set{1,\ldots, k} $.
\end{enumerate}
\end{cor}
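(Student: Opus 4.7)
The plan is to proceed by induction on $k$, with the base case $k = 2$ being precisely Corollary \ref{cor:ProductExact} applied without amplification (so $t=1$), producing scalars $s, 1/s$ whose product is $1$, matching the required normalization $t_1 t_2 \cdots t_k = 1$ that the final statement intends.

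For the inductive step, assume the conclusion holds for $k-1$ diffuse tensor factors and regroup
\begin{equation*}
L(\Gamma) \cong P_1 \bar\otimes Q, \qquad Q := P_2 \bar\otimes \cdots \bar\otimes P_k,
\end{equation*}
where $Q$ is again a diffuse II$_1$ factor. First I apply Corollary \ref{cor:ProductExact} to this two-fold decomposition to obtain a unitary $u_0 \in \mathcal{U}(L(\Gamma))$, a scalar $s_1 > 0$, integers $n_1, m > 0$ with $n_1 + m = n$, and subgroups $\Gamma_1, \Lambda \leqslant \Gamma$ with $\Gamma_1 \in \Quot_{n_1}(\mathcal{C}_\text{rss})$ and $\Lambda \in \Quot_m(\mathcal{C}_\text{rss})$ satisfying $\Gamma = \Gamma_1 \times \Lambda$, $P_1^{s_1} = u_0 L(\Gamma_1) u_0^*$, and $Q^{1/s_1} = u_0 L(\Lambda) u_0^*$. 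Since $\Gamma$ is icc, so is $\Lambda$.

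Next, since amplification distributes through a single tensor factor,
\begin{equation*}
L(\Lambda) \cong Q^{1/s_1} \cong P_2^{1/s_1} \bar\otimes P_3 \bar\otimes \cdots \bar\otimes P_k,
\end{equation*}
so $L(\Lambda)$ is expressed as a $(k-1)$-fold tensor product of diffuse factors. The inductive hypothesis applied to $\Lambda$ yields subgroups $\Gamma_2, \ldots, \Gamma_k \leqslant \Lambda$, a unitary $v_0 \in \mathcal{U}(L(\Lambda))$, positive integers $n_2, \ldots, n_k$ with $n_2 + \cdots + n_k = m$, and scalars $r_2, \ldots, r_k > 0$ with $r_2 r_3 \cdots r_k = 1$ such that $\Lambda = \Gamma_2 \times \cdots \times \Gamma_k$, each $\Gamma_i \in \Quot_{n_i}(\mathcal{C}_\text{rss})$, $(P_2^{1/s_1})^{r_2} = v_0 L(\Gamma_2) v_0^*$, and $P_i^{r_i} = v_0 L(\Gamma_i) v_0^*$ for $i \geq 3$. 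To assemble, set $t_1 := s_1$, $t_2 := r_2/s_1$, $t_i := r_i$ for $i \geq 3$, and $u := u_0 \tilde{v}_0$ where $\tilde v_0$ is $v_0$ viewed as a unitary of $L(\Gamma)$ under the embedding $L(\Lambda) \hookrightarrow u_0^* L(\Gamma) u_0$. Then $\prod_i t_i = r_2 \cdots r_k = 1$, $\Gamma = \Gamma_1 \times \cdots \times \Gamma_k$ with $\sum_i n_i = n$, and $u L(\Gamma_i) u^* = P_i^{t_i}$ for each $i$ follows by tracking through the two successive conjugations.

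The main obstacle is the bookkeeping of amplification parameters across the induction: the scalar $s_1$ from the first step must be absorbed into $t_2$ so that the product normalization $\prod t_i = 1$ is preserved, and the unitary $u$ must simultaneously implement all $k$ intertwinings. A subsidiary technical point is that Corollary \ref{cor:ProductExact} literally provides only commensurability of $\Gamma$ to $\Gamma_1 \times \Lambda$, so to reach the honest equality $\Gamma = \Gamma_1 \times \Lambda$ one invokes the icc hypothesis exactly as in the final paragraph of the proof of Proposition \ref{prop:finiteIndexAlgebrastoGroups}, namely $[\Gamma : \Gamma_1 \Lambda] < \infty$ together with iccness forces $\Gamma_1 \cap \Lambda = \{e\}$, and any residual finite-index defect is absorbed by slightly shrinking the subgroups before passing to the next inductive step.
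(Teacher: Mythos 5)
Your proposal follows essentially the same inductive scheme as the paper: peel off one tensor factor, apply Corollary~\ref{cor:ProductExact}, and apply the inductive hypothesis to the remaining $(k-1)$-fold tensor product, with bookkeeping of the amplification scalars. The only cosmetic difference is that you isolate $P_1$ and induct on $P_2\bar\otimes\cdots\bar\otimes P_k$, whereas the paper sets $Q_1=P_1\bar\otimes\cdots\bar\otimes P_{k-1}$, $Q_2=P_k$ and inducts on $Q_1$; your more explicit tracking of the scalars and of the commensurability-versus-equality issue is a welcome sharpening of the paper's terse argument, not a different route.
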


\begin{proof}
When $k=2 $,  this  follows from Corollary \ref{cor:ProductExact}.  Now assume the conclusion of the corollary holds for all tensor decompositions with at most $k-1 $ factors.  Writing $L(\Gamma)^t=Q_1\bar\otimes Q_2 $ where $Q_1=P_1\bar\otimes\cdots\bar\otimes P_{k-1} $ and $Q_2=P_k $, Corollary \ref{cor:ProductExact} implies the existence of subgroups $\Gamma_1, \Gamma_2\leqslant \Gamma$, a unitary $ u\in \mathcal{U}(L(\Gamma))$, and a scalar $s_1 $ so that $Q_1^{s}=uL(\Gamma_1)u^* $ and $Q_2^{t/s}=uL(\Gamma_2)u^* $.  Applying the induction hypothesis to $Q_1$ will give the conclusion of the corollary. 
\end{proof}

\begin{cor}
Suppose $ \Gamma_1,\ldots, \Gamma_m $ and $\Lambda_1,\ldots, \Lambda_m $ are icc groups such that $\Gamma_i\in \Quot_{n_i}(\mathcal{C}_\text{rss}) $ and $\Lambda_{i}\in \Quot_{m_j}(\mathcal{C}_\text{rss}) $.  If $L(\Lambda_i) $ and $L(\Gamma_j) $ are prime II$_1$ factors so that $L(\Gamma_1\times \cdots\times \Gamma_m)\cong L(\Lambda_1\times \cdots\times \Lambda_n) $, then  $n=m $ and we have $L(\Gamma_i)\cong L(\Lambda_i)$ for every $i\in \set{1,\ldots, m} $, up to		 permutation and amplification.
\end{cor}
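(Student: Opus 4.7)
The plan is to apply Corollary~\ref{cor:ProuctExactMultiple} once in order to convert the tensor decomposition $L(\Gamma_1\times\cdots\times\Gamma_m)\cong L(\Lambda_1)\bar\otimes\cdots\bar\otimes L(\Lambda_n)$ into a second direct product decomposition of the icc group $\Gamma:=\Gamma_1\times\cdots\times\Gamma_m$, and then to conclude by a Krull--Remak--Schmidt style uniqueness argument at the group level. Explicitly, the corollary produces subgroups $\Omega_1,\ldots,\Omega_n\leqslant\Gamma$, scalars $t_j>0$, and a unitary $u\in L(\Gamma)$ with $\Gamma=\Omega_1\times\cdots\times\Omega_n$, $\Omega_j\in\Quot_{\tilde n_j}(\mathcal{C}_\text{rss})$, and $L(\Lambda_j)^{t_j}=uL(\Omega_j)u^*$; in particular each $L(\Omega_j)$ inherits primeness from $L(\Lambda_j)$ since primeness is preserved under amplification and unitary conjugation.

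Next I would argue that each $\Omega_j$, as well as each $\Gamma_i$, is directly indecomposable as an abstract group. If $\Omega_j$ split as $A\times B$ with both factors infinite, then since $\Omega_j$ is a direct factor of the icc group $\Gamma$ it is itself icc, so both $A$ and $B$ are infinite icc groups; Proposition~\ref{prop:ProductConverse} would then place both in $\Quot(\mathcal{C}_\text{rss})$, contradicting the primeness criterion of Theorem~B applied to $L(\Omega_j)$. A nontrivial finite direct factor of $\Omega_j$ would lie in its center, which is trivial by icc, so $\Omega_j$ is genuinely indecomposable. The same reasoning applies to each $\Gamma_i$ by assumption on $L(\Gamma_i)$.

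Finally the two resulting decompositions
\[
\Gamma=\Gamma_1\times\cdots\times\Gamma_m=\Omega_1\times\cdots\times\Omega_n
\]
of the centerless group $\Gamma$ into indecomposable direct factors can be matched via the Krull--Remak--Schmidt theorem for centerless groups: in any centerless group a direct factor is determined by its centralizer, and any two decompositions into indecomposable direct factors are equivalent up to permutation. This forces $m=n$ and yields a permutation $\sigma\in S_m$ with $\Gamma_i\cong\Omega_{\sigma(i)}$, whence $L(\Gamma_i)\cong L(\Omega_{\sigma(i)})\cong L(\Lambda_{\sigma(i)})^{t_{\sigma(i)}}$, establishing the uniqueness up to permutation and amplification.

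The main obstacle I anticipate is justifying the final uniqueness step cleanly without external machinery. An alternative route, avoiding any appeal to Krull--Remak--Schmidt, is to induct on $m+n$ using the two-factor Corollary~\ref{cor:ProductExact} in place of Corollary~\ref{cor:ProuctExactMultiple}: at each stage one peels off a single prime tensor factor $L(\Lambda_n)$, produces a corresponding direct factor $\Sigma_1\leqslant\Gamma$ with $L(\Sigma_1)$ prime, and uses the indecomposability argument above together with the centerless group identity $C_\Gamma(\Sigma_1)=\Sigma_2$ to conclude that $\Sigma_1$ must coincide with exactly one $\Gamma_i$. This reduces to the case $(m-1,n-1)$ and finishes the induction.
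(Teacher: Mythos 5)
Your proposal is essentially the paper's own argument, but fleshed out. The paper's proof is terse: it observes that $n=m$ follows from Lemma~\ref{lem:MaxNumberCommutingAlg} and Theorem~\ref{thrm:main2Algebras}, sets $P_i=L(\Lambda_i)$, and declares that Corollary~\ref{cor:ProuctExactMultiple} ``immediately implies the conclusion.'' What that corollary actually delivers, exactly as in your first paragraph, is a second direct decomposition $\Gamma=\Omega_1\times\cdots\times\Omega_n$ with $L(\Lambda_j)^{t_j}=uL(\Omega_j)u^*$; the matching of the $\Omega_j$ with the $\Gamma_i$ is left implicit. Your indecomposability argument (primeness of $L(\Omega_j)$ together with Proposition~\ref{prop:ProductConverse} and the primeness criterion) followed by Krull--Remak--Schmidt for centerless groups is precisely the bookkeeping needed to finish, and your observation that $\Gamma$ icc forces each direct factor to be icc is the correct reason one may invoke the criterion. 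Your alternative inductive route, peeling off one factor at a time with Corollary~\ref{cor:ProductExact} and the identity $C_\Gamma(\Sigma_1)=\Sigma_2$, is in the same spirit as the proof of Theorem~\ref{thrm:main2Algebras} itself and avoids quoting the uniqueness-of-factorization theorem; either is fine. The only point worth keeping an eye on is that the uniqueness-of-direct-decomposition statement for centerless groups needs the decompositions to be into \emph{finitely many} indecomposable factors, which holds here, so there is no real gap.
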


\begin{proof}
The statement that $n=m $ clearly follows from Lemma \ref{lem:MaxNumberCommutingAlg} and Theorem \ref{thrm:main2Algebras}.  
Letting  $P_i=L(\Lambda_i) $ for every $i\in\set{1,\ldots m} $, Corollary \ref{cor:ProuctExactMultiple} immediately implies the conclusion.  
\end{proof}

We now provide explicit examples to which  we  apply our results.
\begin{cor}\label{cor:MixedBraid}
 If $ k,j\geq 3$  and $\widetilde{B}_{j,k}  \leq \widetilde{B}_{k+j}$ is a mixed braid subgroup, then  $pL(\widetilde{B}_{j,k}) p$ is prime for every non-zero projection $p\in \mathcal{P}(L(\widetilde{B}_{k,j})) $.     
\end{cor}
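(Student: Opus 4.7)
The plan is to apply the main structural theorems of the paper and reduce the primeness claim to a purely group-theoretic statement about $\widetilde{B}_{j,k}$. Since primeness is invariant under amplification (if $M$ is prime and $M^t \cong P_1 \bar\otimes P_2$ with $P_i$ diffuse, one can re-amplify to a tensor splitting of $M$ itself), it suffices to prove that $L(\widetilde{B}_{j,k})$ is prime. By the ``in particular'' clause of the second main theorem in the introduction, this is equivalent to showing $\widetilde{B}_{j,k}$ is not commensurable to a non-trivial direct product of groups in $\Quot(\mathcal{C}_\text{rss})$.

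First I would record the ambient facts needed to invoke the theorems of the paper. Specializing Equation \eqref{eq:MSBSequence} to $M = \mathbb{D}^2 = \Sigma_{0,1}$ and applying \cite[Theorem 3.5]{CKP14} with $k,j\geq 3$ gives $\widetilde{B}_{j,k} \in \Quot_{k+j-4}(\mathcal{F}) \subseteq \Quot_{k+j-4}(\mathcal{C}_\text{rss})$. A direct check on the pure mixed subgroup $\widetilde{P}_{k+j} \leqslant \widetilde{B}_{j,k}$ shows that $\widetilde{B}_{j,k}$ is icc for $k+j \geq 6$. These place us squarely in the hypotheses of Theorem \ref{thrm:main2Algebras} and Corollary \ref{cor:ProductExact}.

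Next, assume for contradiction that $\widetilde{B}_{j,k}$ is commensurable to $\Sigma_1 \times \Sigma_2$ with both $\Sigma_i$ infinite. Proposition \ref{prop:ProductConverse} then yields $\Sigma_i \in \Quot_{n_i}(\mathcal{C}_\text{rss})$ with $n_1 + n_2 = k+j-4$. I would propagate this splitting through the Fadell--Neuwirth short exact sequence
\begin{equation*}
1 \longrightarrow \widetilde{B}_k\!\left(\mathbb{D}^2 \setminus \{x_1,\ldots,x_j\}\right) \longrightarrow \widetilde{B}_{j,k} \longrightarrow \widetilde{B}_j \longrightarrow 1
\end{equation*}
and apply Corollary \ref{cor:QuotnCommutingGroups} to the images of $\Sigma_1,\Sigma_2$ in the poly-free quotient $\widetilde{B}_j$, reasoning by induction on $k+j$. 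The base case uses the already-known primeness of $L(\widetilde{B}_j)$ coming from $\widetilde{B}_j \in \Quot_{j-2}(\mathcal{F})$ and \cite{CKP14}; the inductive step forces one of the $\Sigma_i$'s to land, up to finite index, entirely inside the fiber, so that the fibration virtually splits as a direct product.

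The main obstacle I expect is the final algebraic step: ruling out the virtual splitting of the Fadell--Neuwirth fibration for $\widetilde{B}_{j,k}$. Concretely, one must exhibit in $\widetilde{B}_{j,k}$ a mixed element (for instance a half-twist exchanging a strand of the $k$-colored family with a strand of the $j$-colored family, projected modulo the center) whose centralizer is incompatible with any direct product commensurability — equivalently, showing that no finite index subgroup decomposes as a product of a subgroup of the fiber and a subgroup projecting onto a finite index subgroup of $\widetilde{B}_j$. This should follow from standard non-splitting results for mapping class groups of punctured disks, but it is the place where the argument leaves the von Neumann-algebraic machinery and relies on genuinely group-theoretic input specific to mixed braid groups.
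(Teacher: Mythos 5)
Your reduction to the group-level statement is correct, and your placement of $\widetilde{B}_{j,k}$ in $\Quot_{k+j-4}(\mathcal{C}_\text{rss})$ is exactly right, but the proof as you have outlined it has a genuine unresolved gap that you yourself flag as ``the main obstacle.'' You cannot, with the tools developed in the paper, rule out a virtual direct-product splitting of $\widetilde{B}_{j,k}$ by propagating through the Fadell--Neuwirth fibration: that would require a non-splitting result for mapping class groups of punctured disks, which you defer to ``standard results'' without establishing it. As written, the argument is incomplete precisely at the step where the primeness conclusion is actually decided.

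The paper avoids this difficulty entirely by exploiting commensurability in the other direction. Since $S_k\times S_j\leqslant S_{k+j}$, the mixed braid group $\widetilde{B}_{j,k}$ sits inside $\widetilde{B}_{k+j}$ as a subgroup of finite index $(k+j)!/(k!\,j!)$, and therefore $\widetilde{B}_{j,k}$ and $\widetilde{B}_{k+j}$ are commensurable. Hence if Theorem~\ref{thrm:main2Algebras} forced $\widetilde{B}_{j,k}$ to be commensurable to a non-trivial direct product of infinite groups, the same would be true of $\widetilde{B}_{k+j}$, and this contradicts the primeness of $L(\widetilde{B}_{k+j})$ established in \cite[Theorem A]{CKP14}. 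In other words, the relevant non-splitting statement for the standard central quotient $\widetilde{B}_{k+j}$ is already available and is simply imported; no fresh group-theoretic input about centralizers of mixed half-twists or about the geometry of the fibration is needed. Your induction through the Fadell--Neuwirth sequence, besides leaving the last step unproven, is also more delicate than it looks, since the fiber $\widetilde{B}_k(\mathbb{D}^2\setminus\{x_1,\dots,x_j\})$ is a braid group of a punctured disk rather than an ordinary $\widetilde{B}_{k'}$, so the inductive hypothesis you want to apply to it is not literally of the same form. If you replace the fibration argument with the observation that $\widetilde{B}_{j,k}\leqslant\widetilde{B}_{k+j}$ is of finite index, your reduction to a group-theoretic statement, combined with \cite[Theorem A]{CKP14}, closes immediately.
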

\begin{proof}
Suppose to the contrary that there exists a projection $p\in \mathcal{P}(L(\widetilde{B}_{k,j})) $ so that $pL(\widetilde{B}_{k,j})p $ is non-prime.  The  by Theorem \ref{thrm:main2Algebras},  $\widetilde{B}_{k,j} $ is commensurable to a product of groups.  However, since $B_{k,j} $ is a finite index subgroup of $\widetilde{B}_{k+j}$ of index $k!j! $, this would impy that $ \widetilde{B}_{k+j} $ is commensurable to a product of infinite groups, which contradicts \cite[Theorem A]{CKP14}.    
\end{proof}

Fix an integer $n\geq 3$ and choose a collection of at least two positive integers  $k_1,\ldots, k_m $  such that $k_i\neq 2$ for all $i\in\set{1,\ldots, m} $ with $k_1+\cdots + k_m=n $.  Then there exists a canonical epimorphism $\pi: B_n\to S_{k_1}\times \cdots \times S_{k_m} $. If we denote by $B_{k_1,\ldots, k_m}:=\ker \pi $, then this generalizes the mixed braid groups.  Note that since $[B:B_{k_1,\ldots, k_m}]=\prod_{i=1}^m k_i!  $, the proof of Corollary \ref{cor:MixedBraid} can be modified to verify the following:
\begin{cor}
Let $\widetilde{B}_{k_1,\ldots, k_m} $ be as above.  Then for any projection $p\in \mathcal{P}(L(\widetilde{B}_{k_1,\ldots, k_m}) )$, $pL(\widetilde{B}_{k_1,\ldots, k_m})p $ is prime.  
\end{cor}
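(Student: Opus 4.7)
The plan is to mimic the proof of Corollary \ref{cor:MixedBraid} essentially verbatim, after first confirming that $\widetilde{B}_{k_1,\ldots,k_m}$ falls within the framework of Theorem \ref{thrm:main2Algebras}. The crux is that commensurability is transitive and that $\widetilde{B}_{k_1,\ldots,k_m}$ sits as a finite-index subgroup of $\widetilde{B}_n$ with index $\prod_{i=1}^m k_i!$.

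First I would confirm $\widetilde{B}_{k_1,\ldots,k_m} \in \Quot(\mathcal{C}_\text{rss})$. By the analysis of the short exact sequence \eqref{eq:MSBSequence} specialized to the disk (so that $\widetilde{B}_n \in \Quot_{n-2}(\mathcal{F}) \subset \Quot(\mathcal{C}_\text{rss})$), combined with the fact from Proposition \ref{prop:QuotProperties}(3) that $\Quot(\mathcal{C}_\text{rss})$ is closed under commensurability up to finite kernel, the containment $\widetilde{B}_{k_1,\ldots,k_m} \leqslant \widetilde{B}_n$ of finite index gives $\widetilde{B}_{k_1,\ldots,k_m} \in \Quot(\mathcal{C}_\text{rss})$. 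This also ensures $\widetilde{B}_{k_1,\ldots,k_m}$ is icc, as an infinite normal subgroup of the icc group $\widetilde{B}_n$.

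Next, assume for contradiction that there is a non-zero projection $p \in \mathcal{P}(L(\widetilde{B}_{k_1,\ldots,k_m}))$ such that $pL(\widetilde{B}_{k_1,\ldots,k_m})p$ is non-prime; write it as a tensor product of two diffuse factors. Applying Theorem \ref{thrm:main2Algebras} (with $k=2$) yields infinite groups $\Sigma_1, \Sigma_2 \in \Quot(\mathcal{C}_\text{rss})$ such that $\widetilde{B}_{k_1,\ldots,k_m}$ is commensurable to the direct product $\Sigma_1 \times \Sigma_2$.

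Finally, by transitivity of commensurability, $\widetilde{B}_n$ is itself commensurable to $\Sigma_1 \times \Sigma_2$, a direct product of two infinite groups. This contradicts \cite[Theorem A]{CKP14}, which states that $L(\widetilde{B}_n)$ is prime and, more strongly in our setting, that $\widetilde{B}_n$ is not commensurable to a non-trivial product of infinite groups. Hence the assumption was false and $pL(\widetilde{B}_{k_1,\ldots,k_m})p$ is prime for every non-zero projection $p$. I do not anticipate a serious obstacle here: everything reduces to the fact that the conclusion of Theorem \ref{thrm:main2Algebras} is invariant under passing to finite-index overgroups, which is exactly what allows the primeness of $L(\widetilde{B}_n)$ to pull back to primeness of all corners of $L(\widetilde{B}_{k_1,\ldots,k_m})$.
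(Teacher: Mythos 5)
Your argument is exactly the modification of Corollary \ref{cor:MixedBraid} that the paper sketches: pass to $\widetilde{B}_n$ via the finite-index inclusion, invoke Theorem \ref{thrm:main2Algebras} and transitivity of commensurability, and derive a contradiction with \cite[Theorem A]{CKP14}. Your added verifications (that $\widetilde{B}_{k_1,\ldots,k_m}$ is icc as a finite-index normal subgroup of the icc group $\widetilde{B}_n$, and lies in $\Quot(\mathcal{C}_\text{rss})$ by commensurability closure) are implicit in the paper and correctly filled in.
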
 

\section{Closing remarks}
Note the analysis  involved in the proofs of the theorems in Section 5 continue to hold if we forgo the factoriality assumption and instead   assume $ L(\Gamma) $ has a  finite dimensional center, e.g.   $\Gamma $ is a finite-by-icc group.  By carrying out the same procedure we have the following generalization of the main theorem:
\begin{thrm}
Suppose $P_1\vee\cdots \vee  P_k\subset L(\Gamma) $ where $\Gamma\in \Quot_n(\mathcal{C}_\text{rss}) $.  Then there exist commuting groups $\Sigma_i\in \Quot_{n_i}(\mathcal{C}_\text{rss})  $ so that $\Gamma $ is commensurable to the product $\Sigma_1\cdots \Sigma_k $.  
\end{thrm}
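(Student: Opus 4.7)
The strategy is to run the argument of Theorem \ref{thrm:main2Algebras} in parallel, accommodating the finite-dimensional center of $L(\Gamma)$ by cutting with minimal central projections. The key inputs from Section 5—the one-level intertwining of Proposition \ref{prop:Intertwine1Level}, the commuting-subalgebras bound of Lemma \ref{lem:MaxNumberCommutingAlg}, and the finite-index-subalgebra-to-subgroup passage of Proposition \ref{prop:finiteIndexAlgebrastoGroups}—only use the icc/factoriality assumption at a small number of isolated points, each of which can be handled by cutting by an appropriate minimal central projection. Concretely, since $\mathcal{Z}(L(\Gamma))$ is finite dimensional, I would write $1=\sum_{i=1}^\ell z_i$ where the $z_i$ are minimal central projections of $L(\Gamma)$; each corner $M_i:=z_iL(\Gamma)z_i$ is then a type II$_1$ factor, and the cutdowns $P_jz_i$ remain diffuse commuting factors generating a finite-index subalgebra of $M_i$ (finite Pimsner--Popa index is preserved by compression).

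Assuming the finite-by-icc structure with finite normal subgroup $N\lhd \Gamma$ and icc quotient $\bar\Gamma:=\Gamma/N$, each corner $M_i$ is identified with an amplification $L(\bar\Gamma)^{t_i}$, so I would invoke Corollary \ref{cor:main2AlgebrasAmplfication} on $M_i$ with the subfactors $P_jz_i$. This yields commuting subgroups $\bar\Sigma_1,\ldots,\bar\Sigma_k\leqslant\bar\Gamma$ with $\bar\Sigma_j\in \Quot_{n_j}(\mathcal{C}_\text{rss})$, integers summing to $n$, and $\bar\Gamma$ commensurable to $\bar\Sigma_1\times\cdots\times\bar\Sigma_k$. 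A priori the subgroups produced from different central corners could differ; however, the rigidity of the decomposition together with the fact that all corners come from the common global data $(P_1,\ldots,P_k)$ forces the $\bar\Sigma_j$ to coincide up to commensurability across all $i$. The desired subgroups $\Sigma_j\leqslant \Gamma$ are then the preimages of $\bar\Sigma_j$ along $\Gamma\twoheadrightarrow \bar\Gamma$; since $N$ is finite, Proposition \ref{prop:QuotProperties}(3) keeps each $\Sigma_j$ in $\Quot_{n_j}(\mathcal{C}_\text{rss})$, and the commensurability $\bar\Gamma\sim\bar\Sigma_1\times\cdots\times\bar\Sigma_k$ lifts to $\Gamma\sim \Sigma_1\cdots\Sigma_k$ by taking preimages.

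The principal obstacle is that these pullback subgroups $\Sigma_j$ only commute modulo $N$—their commutators $[\Sigma_i,\Sigma_j]$ lie in the finite group $N$—rather than commuting on the nose. To remedy this, I would replace each $\Sigma_j$ by the subgroup $\Sigma_j\cap \bigcap_{i\neq j}C_\Gamma(\Sigma_i)$; finiteness of $N$ forces the conjugation orbits $g^{\Sigma_j}\subset gN$ to be of bounded size for $g\in \Sigma_i$, so these centralizer intersections are of finite index in $\Sigma_j$. This step mirrors the virtual-centralizer maneuver used inside Proposition \ref{prop:finiteIndexAlgebrastoGroups}, where a similar obstruction was resolved by passing to $\mathcal V_\Gamma(\Sigma)$. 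The resulting honest commuting subgroups remain in $\Quot_{n_j}(\mathcal{C}_\text{rss})$ by commensurability closure, and $\Gamma$ is still commensurable to their product, finishing the argument.
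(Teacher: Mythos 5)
Your approach differs from the paper's. The paper's "proof" is the remark preceding the statement: it proposes to rerun the Section 5 arguments directly on $L(\Gamma)$, observing that the factoriality assumption enters only at isolated points that can be replaced by cutting by a minimal central projection, and noting in particular that Proposition \ref{prop:finiteIndexAlgebrastoGroups} already produces \emph{honestly} commuting subgroups $\Sigma_1$, $\Sigma_2$ via the von Neumann--algebraic Claims 4.9--4.12 of \cite{CdSS15}; the icc hypothesis in that proposition is used only to pass from finite intersection $\Sigma_1\cap\Sigma_2$ to trivial intersection, which is exactly the conclusion that the weaker theorem drops, leaving commensurability to the (central) product $\Sigma_1\cdots\Sigma_k$. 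Your proposal instead passes to the icc quotient $\bar\Gamma = \Gamma/N$, applies the icc result there, and attempts to pull back.

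The pullback route has a genuine gap at the crucial last step. After pulling $\bar\Sigma_j$ back to $\Sigma_j\leqslant\Gamma$, you only know $[\Sigma_i,\Sigma_j]\subset N$, and you propose to repair this by replacing $\Sigma_j$ with $\Sigma_j\cap\bigcap_{i\neq j}C_\Gamma(\Sigma_i)$, arguing that bounded conjugation orbits ($|g^{\Sigma_j}|\leq |N|$) force this to be of finite index. That inference does not hold: each stabilizer $C_{\Sigma_j}(g)$ has index at most $|N|$, but the intersection over all $g\in\Sigma_i$ is an intersection of possibly infinitely many finite-index subgroups and can have infinite index. More to the point, a finite central extension of a direct product of infinite groups need not virtually split as a direct product of honestly commuting subgroups at all --- the obstruction lives in a K\"unneth cross-term of $H^2$, and the Heisenberg quotient $H_3(\Z)/2Z(H_3(\Z))$ is a familiar example where exactly this obstruction is non-trivial. (That example is amenable and so cannot occur here, but it shows your group-theoretic argument, which makes no use of non-amenability, is insufficient as written; to salvage it you would need something like finite generation of the $\Sigma_i$, so that $C_{\Sigma_j}(\Sigma_i)$ is an intersection of finitely many finite-index subgroups, or a vanishing statement for the relevant cohomology, and neither is established.) The analogy you draw to the $\mathcal{V}_\Gamma(\Sigma)$ maneuver inside Proposition \ref{prop:finiteIndexAlgebrastoGroups} is also not apt: passing to the virtual centralizer collects elements each of whose orbits is finite, which is a weaker condition than the whole conjugation action being trivial, and the actual commuting pair in that proposition is produced by the von Neumann--algebraic claims of \cite{CdSS15}, not by a centralizer intersection.

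Two smaller points. First, the identification of each central corner $z_iL(\Gamma)z_i$ with an amplification $L(\bar\Gamma)^{t_i}$ is not quite right in general: for a finite-by-icc group the corners are amplifications of \emph{twisted} group von Neumann algebras of $\bar\Gamma$, and Corollary \ref{cor:main2AlgebrasAmplfication} as stated concerns only untwisted $L(\bar\Gamma)^t$. Second, the discussion of whether different central corners produce the same $\bar\Sigma_j$ is unnecessary: the theorem is an existence statement, so one corner suffices, and you can drop that paragraph entirely.
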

While our methods provide a criterion whereby one determines if a group von Neumann algebra which arises from a finite-step extension by $\mathcal{C}_\text{rss} $ is prime, we have not addressed the  question of \cite{CKP14}: is the $NC_1 $ condition sufficient to conclude that the resulting group von Neumann algebra is prime?

\end{document}